\theoremstyle{plain}
\newtheorem{thm}{Theorem}[section]
\newtheorem{lem}[thm]{Lemma}
\newtheorem{prop}[thm]{Proposition}
\newtheorem{cor}[thm]{Corollary}
\theoremstyle{definition}
\newtheorem{defi}[thm]{Definition}
\newtheorem{rem}[thm]{Remark}
\newcommand{\R}{\mathbb R}
\newcommand{\Z}{\mathbb Z}
\newcommand{\F}{\mathbb F}
\newcommand{\nn}{\vskip 0.2cm}
\newcommand{\n}{\vskip 0.1cm}
\newcommand{\FF}{\mathcal F}
\newcommand{\ZZ}{\mathcal Z}
\renewcommand{\geq}{\geqslant}
\renewcommand{\leq}{\leqslant}
\newcommand{\wt}{\widetilde}
\renewcommand{\subset}{\subseteq}
\begin{document}

\title [\ ] {On Simplicial Complexes with Extremal 
Total Betti number and Total Bigraded Betti Number}

\author{Pimeng Dai*, \  Li Yu**}
\address{*School of Mathematics, Nanjing University, Nanjing, 210093,
P.R.China}
 \email{pimengdai@gmail.com}
\address{**School of Mathematics, Nanjing University, Nanjing, 210093,
P.R.China}
 \email{yuli@nju.edu.cn}


\keywords{Total Betti number, bigraded Betti number, Sperner family}

\subjclass[2020]{05E45, 13F55}

\begin{abstract}
We determine which simplicial complexes have
the maximum or minimum sum of Betti numbers and sum of bigraded Betti numbers with a given number of vertices
in each dimension. 
 \end{abstract}

\maketitle

 \section{Introduction}
  For a finite CW-complex $X$ and a field $\F$, 
  let
\[ \beta_i(X;\F) := \dim_{\F} H_i(X;\F), \ \ \ \widetilde{\beta}_i(X;\F) := \dim_{\F} \widetilde{H}_i(X;\F)\]  
   where $H_i(X;\F)$ and $\widetilde{H}_i(X;\F)$ are the singular and reduced singular homology group of $X$ with $\F$-coefficients, respectively. Moreover, 
  define
   \[ tb(X;\F):= \sum_{i} \beta_i(X;\F), \ \ \  \widetilde{tb}(X;\F):= \sum_{i} \widetilde{\beta}_i(X;\F). \]  
    We call $tb(X;\F)$ and $\widetilde{tb}(X;\F)$ the  \emph{total Betti number} and \emph{reduced total Betti number} of $X$ with $\F$-coefficients, respectively. 
    The difference between $tb(X;\F)$ and $\widetilde{tb}(X;\F)$ is just $1$. 
 In addition, we call
 \[  \widetilde{\chi}(X):= \sum_{i} (-1)^i \widetilde{\beta}_i(X;\F) \]
  the \emph{reduced Euler characteristic} of $X$, which is independent on
 the coefficient $\F$.\n
    
    \noindent \textbf{Convention:} The coefficient field $\mathbb{F}$ will be omitted when there is no ambiguity in the context or the coefficients are not essential. In fact, all the results obtained
    in this paper are independent on the coefficients.\n
    
     The total Betti number of a topological space plays
     an important role in many theories in mathematics. For example, in
    the Arnold conjecture in symplectic geometry, the total Betti number of a symplectic manifold $M$ serves as the lower bound of
    the number of fixed points of any
     Hamiltonian symplectomorphism of $M$; see Abbondandolo~\cite[Chapter 6]{Abb01} for more information.
    Another example is from the Smith theory of cyclic group actions where the Smith inequality says that for a prime $p$ and  a $\Z_p$-action
    on a finite CW-complex $X$,     
    the fixed point set $X^{\Z_p}$ must satisfy 
   $tb(X^{\Z_p};\Z_p) \leq tb(X;\Z_p)$; see Allday and Puppe~\cite[Chapter 1]{AllPuppe93}. In addition,
  an important conjecture in equivariant topology 
  and rational homotopy theory
  due to Halperin~\cite{Halp85} states that if a
   $k$-dimensional torus $T^k=(S^1)^k$ can act almost freely on a finite-dimensional topological space $X$, then $tb(X;\mathbb{Q})\geq 2^k$. A parallel conjecture due to Carlsson~\cite{Carl86} states that for a prime $p$, if a $p$-torus $(\Z_p)^k$ 
  can act freely on a finite CW-complex $X$, then $tb(X;\Z_p)\geq 2^k$. \n
   
    In this paper, we focus our
    study on the total Betti number and total bigraded Betti number (see Definition~\ref{Def:Bigraded-Betti}) of a simplicial complex with a fixed number of vertices. 
     Let $K$ be a simplicial complex whose vertex set is
    $$\mathrm{Ver}(K)=[m]=\{1,2,\cdots, m\}.$$
    
     Each simplex
    $\sigma$ of $K$ is considered as a subset of $[m]$.
     Let $\mathrm{Star}_K \sigma$ and $\mathrm{Link}_K \sigma$ denote
    the \emph{star} and the \emph{link} of $\sigma$ in $K$, respectively. 
    For any subset $J\subseteq [m]$, let 
    \[ K|_J = \ \text{the \emph{full subcomplex} of $K$ obtained by restricting to}\ J.\]  
   In particular, when $J=\varnothing$, $K|_J=\varnothing$ and
   define
  \[ \beta_{i}(\varnothing)=0, \ \forall i\geqslant 0; \ \ \ \widetilde{\beta}_{i}(\varnothing)=  \begin{cases}
   1 ,  &  \text{if $i= -1$ }; \\
   0,  &  \text{otherwise}.
 \end{cases} \]

\textbf{Question 1:} For a positive integer $m$, 
 which simplicial complexes have the
  maximum (reduced) total Betti number among
  all the simplicial complexes with $m$ vertices?
 \n
 When $m=1,2,3$,  the answer is just the discrete $m$ points.
 When $m=4$, the answer is either the discrete $4$ points or the complete graph on $4$ vertices.
 A complete answer to Question 1 is contained in the following theorem. \n

 \begin{thm}[\text{Bj\"{o}rner and 
 Kalai~\cite[Theorem 1.4]{BjrnKal88}}] \label{Thm-BjrnKal}
  Suppose $K$ is a simplicial complex with at most $n+1$ vertices. Then
$
|\widetilde{\chi}(K)| \leqslant \widetilde{tb}(K) \leqslant
 \binom{n}{[n / 2]}
$. 
Moreover, the following conditions are equivalent:
\begin{itemize}
\item[(i)] $|\widetilde{\chi}(K)|= \binom{n}{[n / 2]}$,
\item[(ii)] $\widetilde{tb}(K)= \binom{n}{[n / 2]}$,
\item[(iii)] $K$ is the $k$-skeleton of an $n$-simplex, where $k=n / 2-1$ if $n$ is even and $k=(n-1) / 2$ or $k=(n-3) / 2$ if $n$ is odd.
\end{itemize}
 \end{thm}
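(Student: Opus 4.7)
The first inequality is immediate from the triangle inequality: $|\widetilde{\chi}(K)| = \left|\sum_i(-1)^i\widetilde{\beta}_i(K)\right| \le \sum_i\widetilde{\beta}_i(K) = \widetilde{tb}(K)$, with equality iff the nonzero $\widetilde{\beta}_i(K)$ are concentrated in dimensions of a single parity.

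For the bound $\widetilde{tb}(K) \le \binom{n}{[n/2]}$, my plan is to pass through Kalai's exterior algebraic shifting, which replaces $K$ by a shifted simplicial complex on $[n+1]$ with the same $f$-vector and the same reduced Betti numbers; so I may assume $K$ itself is shifted. For shifted $K$ I would then use the standard acyclic discrete Morse matching pairing each face $F$ with $F \triangle \{1\}$ whenever both lie in $K$, whose critical cells are exactly
\[
C := \{F \in K : 1 \notin F,\ F \cup \{1\} \notin K\} \subseteq 2^{[2,n+1]}.
\]
The Morse inequality then gives $\widetilde{tb}(K) \le |C|$.

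The key combinatorial step is to show that $C$ is an antichain. If $F \subsetneq F'$ both lie in $C$, pick any $j \in F' \setminus F$; the shifted property applied to $F'$ (swapping $j$ with $1$) produces $(F' \setminus \{j\}) \cup \{1\} \in K$, which contains $F \cup \{1\}$, so $F \cup \{1\} \in K$ by downward closure---contradicting $F \in C$. Since $|[2,n+1]| = n$, Sperner's theorem gives $|C| \le \binom{n}{[n/2]}$, completing the bound.

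For the equivalences, the equality $\widetilde{tb}(K) = \binom{n}{[n/2]}$ forces $|C| = \binom{n}{[n/2]}$, so by the uniqueness of maximum antichains in a Boolean lattice $C$ must coincide with a full middle layer of $2^{[2,n+1]}$ (unique for $n$ even, one of two for $n$ odd). A second use of the shifted property (both to shift in the vertex $1$ into the $(k+1)$-faces and to exclude every $(k+2)$-face) then upgrades this to the statement that $K$ contains every $(k+1)$-subset of $[n+1]$ and no $(k+2)$-subset, identifying $K$ as the $k$-skeleton of $\Delta^n$ for the values of $k$ listed in (iii). The converse direction (iii) $\Rightarrow$ (i), (ii) is the standard computation that such a skeleton is a wedge of $\binom{n}{k+1}$ spheres of dimension $k$, so $|\widetilde{\chi}(K)| = \widetilde{tb}(K) = \binom{n}{k+1} = \binom{n}{[n/2]}$, and (i) $\Rightarrow$ (ii) follows by combining the parity constraint from the first paragraph with the upper bound. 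The main obstacle I foresee is the antichain property of $C$ and the subsequent promotion step to a full skeleton, since both rest on careful combinatorial use of the shifted hypothesis.
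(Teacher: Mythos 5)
Your plan is sound, and it is essentially the original Bj\"orner--Kalai argument: exterior algebraic shifting to reduce to a shifted complex, the observation that the unmatched faces $C=\{F\in K : 1\notin F,\ F\cup\{1\}\notin K\}$ form an antichain in $2^{\{2,\dots,n+1\}}$ bounding $\widetilde{tb}(K)$ from above, and Sperner's theorem together with its uniqueness clause. The paper only cites this route for Theorem~\ref{Thm-BjrnKal}; what it actually proves itself (Theorem~\ref{Thm-Main-1}, the $\widetilde{tb}$ half of the statement) goes a genuinely different way. There one averages over the symmetric group: the Mayer--Vietoris subadditivity $\widetilde{tb}(K)+\widetilde{tb}(\phi K)\leq \widetilde{tb}(K\cap\phi K)+\widetilde{tb}(K\cup\phi K)$ for permutations $\phi$ of the vertex set forces the maximal and minimal elements of $\Sigma^{tb}(m)$ to be $\phi$-invariant, hence skeleta $\Delta^{[m]}_{(d)}$, one then optimizes $\widetilde{tb}\big(\Delta^{[m]}_{(d)}\big)=\binom{m-1}{d+1}$ over $d$, and a separate hands-on argument (trading vertices between top simplices) rules out non-skeletal maximizers in the even case. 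That route avoids shifting and Morse theory entirely, at the cost of a delicate case analysis, and it does not address the Euler-characteristic condition (i); your route gets the sharp bound, the uniqueness, and the equivalence with (i) in one stroke, and your antichain computation is exactly the paper's Lemma~\ref{Lem:B-Delta} for near-cones, which the paper does deploy later for Theorem~\ref{Thm-Main-2}.

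Two small points to nail down when you write this up. First, in the equality case your argument classifies the \emph{shifted} complex $\Delta(K)$ as a skeleton; you must transfer back to $K$ itself, which you can do via the preserved $f$-vector: $f_k(K)=f_k(\Delta(K))=\binom{n+1}{k+1}$ on at most $n+1$ vertices forces every $(k+1)$-subset of $[n+1]$ to be a face of $K$, and $f_j(K)=0$ for $j>k$ forces nothing larger. Second, to see that $C$ being a full middle layer determines $\Delta(K)$ completely (rather than merely constraining it), you need that every member of $C$ is a \emph{maximal} face of the shifted complex; this again follows from shiftedness, and is part (ii) of Lemma~\ref{Lem:B-Delta}. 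Neither point is a real obstacle, but both are where the bookkeeping lives.
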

 
  Note that Theorem~\ref{Thm-BjrnKal} not only gives the answer to Question 1, but also reveal the connection between $\widetilde{tb}(K)$ and the Euler characteristic of $K$. The proof of Theorem~\ref{Thm-BjrnKal} in~\cite{BjrnKal88} uses a nontrivial operation called \emph{algebraic shifting} of a simplicial complex and a well-known theorem of Sperner in~\cite{Spern28}. But if we only want to answer Question 1, we can use some easier argument as shown in Section~\ref{Sec-TB}. \n

  Moreover, we can ask a
  more subtle question than Question 1 as follows.\n
  
  \textbf{Question 2:} For each $0\leqslant d < m$,
 which $d$-dimensional simplicial complexes with $m$ vertices have the maximum (reduced) total Betti number among
 all the $d$-dimensional simplicial complexes with $m$ vertices?
 \n
 
 For a pair of integers $(m, d)$, $0 \leqslant d < m$, we introduce the following notations:
 
 \begin{itemize}
 \item  Let $\Sigma(m)$ denote the set of all simplicial complexes with vertex set $[m]$.\n
   
 \item Let $\Sigma(m, d)$ denote the set of all 
 $d$-dimensional simplicial complexes with vertex set $[m]$.
 \end{itemize}

 Moreover, define (with respect to a fixed coefficient field $\mathbb{F}$)
	$$\Sigma^{tb}(m)=\Big\{ K\in\Sigma(m) \mid \widetilde{tb}(K)=\max_{L\in\Sigma(m)}\widetilde{tb}(L)\Big\}\subseteq \Sigma(m).$$  
$$
\Sigma^{t b}(m, d)=\Big\{K \in \Sigma(m, d) \mid \widetilde{tb}(K)=\max _{L \in \Sigma(m, d)} \widetilde{tb}(L) \Big\} \subseteq \Sigma(m, d) .
$$

 We consider $\Sigma(m)$, $\Sigma(m, d)$, $\Sigma^{tb}(m)$ and $
\Sigma^{tb}(m, d)$ as partial ordered sets with respect to the inclusion of simplicial complexes. Clearly,
if $K$ is an (maximal or minimal) element of $\Sigma^{tb}(m)$, then $K$ is also an (maximal or minimal) element of $\Sigma^{tb}(m,\dim(K))$.
\n

In addition, for any $m\geq 1$, we use $\Delta^{[m]}$ to denote the $(m-1)$-dimensional simplex with vertex set $[m]$. So the boundary $\partial \Delta^{[m]}$ of $\Delta^{[m]}$ is a simplicial sphere of dimension $m-2$. Moreover, for
any $0 \leqslant k < d < m$,
\begin{itemize}
\item let
$\Delta^{[m]}_{(k)}$ denote the $k$-skeleton of $\Delta^{[m]}$;\n
\item let $\Delta_{(k)}^{[m]}\langle d\rangle$ denote the minimal $d$-dimensional subcomplex of $\Delta^{[m]}$ that contains $\Delta_{(k)}^{[m]}$, which is unique up to simplicial isomorphism. Indeed, we can think of
 $\Delta_{(k)}^{[m]}\langle d\rangle$ as the union of
 $\Delta_{(k)}^{[m]}$ with the $d$-simplex $\Delta^{[d+1]}$ in $\Delta^{[m]}$ where $[d+1]=\{1,\cdots,d+1\}\subseteq [m]$.
\end{itemize}
\n

We will prove the following theorem in Section~\ref{Sec-TB} which answers Question 2.

\begin{thm}[see Theorem~\ref{Thm-Main-2}] 
 The sets $\Sigma^{tb}(m,d)$ are classified as follows:
 \begin{itemize}
  \item[(i)] If $d \leqslant\left[\frac{m}{2}\right]-1$ or $d=m-1$, then $\Sigma^{tb}(m,d)=\Big\{\Delta_{(d)}^{[m]}\Big\}$;
  \item[(ii)] If $\left[\frac{m}{2}\right] \leqslant d \leqslant m-3 $, then $\Sigma^{tb}(m,d)=
  \Big\{\Delta_{\left(\left[\frac{m}{2}\right]-1\right)}^{[m]}\langle d\rangle \Big\}$;
  \item[(iii)] If $d=m-2$,
    \begin{itemize}
      \item when $m$ is odd, $\Sigma^{tb}(m,d)=\Big\{\Delta_{\left(\left[\frac{m}{2}\right]-1\right)}^{[m]}\langle d\rangle, \Delta_{\left(\left[\frac{m}{2}\right]\right)}^{[m]}\langle d\rangle \Big\}$;
      
 \item when $m$ is even, $\Sigma^{tb}(m,d)=\Big\{\Delta_{\left(\left[\frac{m}{2}\right]-1\right)}^{[m]}\langle d\rangle\Big\}$.
  \end{itemize}
  \end{itemize}
\end{thm}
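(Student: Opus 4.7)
The plan is to first compute $\widetilde{tb}$ for the candidate complexes and then establish matching upper bounds for every $K\in\Sigma(m,d)$, case by case.

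For the candidate values, I would use that the reduced chain complex of the contractible $\Delta^{[m]}$ is exact, so the only nonvanishing reduced homology of $\Delta_{(k)}^{[m]}$ is $\widetilde{H}_k\cong\mathbb{F}^{\binom{m-1}{k+1}}$. For $\Delta_{(k)}^{[m]}\langle d\rangle$ with $k<d$, I apply Mayer--Vietoris to the decomposition $\Delta_{(k)}^{[m]}\langle d\rangle=\Delta_{(k)}^{[m]}\cup\Delta^{[d+1]}$, whose intersection is $\Delta_{(k)}^{[d+1]}$; since $\Delta^{[d+1]}$ is contractible and $\Delta_{(k)}^{[d+1]}\hookrightarrow\Delta_{(k)}^{[m]}$ induces an injection on $\widetilde{H}_k$ (both groups are just $\ker\partial_k$ of their respective chain complexes, and this kernel is preserved under inclusion), the exact sequence collapses to yield
\[
\widetilde{tb}\bigl(\Delta_{(k)}^{[m]}\langle d\rangle\bigr) \;=\; \binom{m-1}{k+1}-\binom{d}{k+1},
\]
concentrated in degree $k$. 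Specializing to $k=[m/2]-1$ (and also $k=[m/2]$ for odd $m$) pins down the target values in the theorem; an arithmetic check via Pascal's identity confirms that, when $m$ is odd and $d=m-2$, the $k=[m/2]-1$ and $k=[m/2]$ candidates share the common value.

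Case (i) splits into three subcases. When $d=m-1$, $\Sigma(m,m-1)=\{\Delta^{[m]}\}$ and the claim is trivial. When $d=[m/2]-1$, the target $\binom{m-1}{d+1}$ equals the global upper bound $\binom{m-1}{[(m-1)/2]}$ from \Cref{Thm-BjrnKal}, and among the extremal complexes classified there, only $\Delta_{(d)}^{[m]}$ is $d$-dimensional. For the remaining range $d\leq[m/2]-2$, I would induct on $m$ using the Mayer--Vietoris inequality $\widetilde{tb}(K)\leq\widetilde{tb}(K|_{[m]\setminus\{v\}})+\widetilde{tb}(\mathrm{Link}_K v)$ for a suitably chosen vertex $v$, combined with Pascal's identity $\binom{m-1}{d+1}=\binom{m-2}{d+1}+\binom{m-2}{d}$; the inductive hypothesis applies to both summands because $d+1\leq[m/2]-1\leq[(m-1)/2]$ in this range. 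Tracing the equality case forces the deletion to be $\Delta_{(d)}^{[m-1]}$ and the link to be $\Delta_{(d-1)}^{[m-1]}$, which together pin $K$ down as $\Delta_{(d)}^{[m]}$.

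Cases (ii) and (iii), where $d\geq[m/2]$, are the main difficulty. The global Bj\"orner--Kalai bound $\binom{m-1}{[(m-1)/2]}$ is strictly larger than the claimed target $\binom{m-1}{[m/2]}-\binom{d}{[m/2]}$ once $\dim K=d$ is forced, so a finer argument is required. My plan is to mimic the algebraic shifting approach of Bj\"orner--Kalai: replace $K$ by a shifted simplicial complex $K'$ with the same $f$-vector and the same reduced Betti numbers, and then count the critical faces via a Sperner-type argument. The refinement is that $K'$ necessarily contains a $d$-face; by the shifting axioms this forces certain large faces into $K'$, which in turn removes their ``Sperner partners'' from the associated antichain of critical faces. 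The resulting antichain in $2^{[m-1]}$ thus avoids a prescribed subset of size $\binom{d}{[m/2]}$ within the middle layer, yielding exactly the claimed bound. Sperner's equality case then identifies the extremal complex uniquely, except when $m$ is odd and $d=m-2$, where both middle layers of $\binom{m-2}{\cdot}$ attain Sperner's bound, accounting for the two extremal complexes in case (iii). I expect the equality analysis, together with verifying that the Sperner-theoretic count is compatible with the dimension constraint, to be the main technical obstacle.
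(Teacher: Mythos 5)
Your overall strategy for the hard cases (ii)--(iii) coincides with the paper's: shift $K$ to a complex $\Delta$ with the same $f$- and $\beta$-vectors, observe that $\Delta$ is a near-cone so that $\widetilde{tb}(K)=|B(\Delta)|$ for a Sperner family $B(\Delta)\subseteq 2^{\{2,\dots,m\}}$, and exploit the fact that $\Delta$ contains the $d$-face $\{1,\dots,d+1\}$ to constrain $B(\Delta)$. Your computation of $\widetilde{tb}\bigl(\Delta^{[m]}_{(k)}\langle d\rangle\bigr)=\binom{m-1}{k+1}-\binom{d}{k+1}$ is correct. Your treatment of case (i) for $d\leq[\frac{m}{2}]-2$ (deletion--link Mayer--Vietoris plus Pascal, inducting on $m$) differs from the paper, which instead symmetrizes under permutations of $[m]$ via Lemma~\ref{lem:mv} to force a minimal extremal complex to be a skeleton; your route is viable, but note that the link of a vertex may have far fewer than $m-1$ vertices, so the inductive hypothesis you invoke must be the bound for \emph{all} pairs $(m',d')$ (as in Corollary~\ref{cor:num}), not only the case-(i) bound, and a short extra estimate is needed when $\dim(\mathrm{Link}_K v)>[\frac{m_v}{2}]-1$.

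The genuine gap is in cases (ii)--(iii), precisely at the step you defer. The constraint coming from the $d$-face is that $B(\Delta)$ contains \emph{no subset whatsoever} of a fixed $d$-element set, i.e.\ every member of $B(\Delta)$ meets the complementary $(m-d-1)$-set $Y=\{d+2,\dots,m\}$ -- not that the antichain merely omits $\binom{d}{[m/2]}$ sets from the middle layer. The quantitative input you need is therefore not Sperner's theorem but the Lih--Griggs theorem: a Sperner family of an $n$-set all of whose members meet a fixed $k$-set has size at most $\binom{n}{\lceil n/2\rceil}-\binom{n-k}{\lceil n/2\rceil}$, with the extremal families classified as $C_{\lceil n/2\rceil}$, $C_{(n-1)/2}$ (only for $n$ odd, $k\geq(n+3)/2$), and $C_{(n+2)/2}$ (only for $n$ even, $k=1$), where $C_i$ denotes all $i$-sets meeting $Y$. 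This classification is exactly what produces the answer: the second family is excluded because it cannot arise from a $d$-dimensional near-cone with $d\geq[\frac{m}{2}]$ (a realizability check, Proposition~\ref{prop:equiv} in the paper, which your sketch omits), and the two extremal complexes for $m$ odd, $d=m-2$ come from the coexistence of $C_{\lceil n/2\rceil}$ and $C_{(n+2)/2}$ when $k=1$ -- these are \emph{not} the two middle layers of $2^{[m-1]}$ in the classical Sperner sense, and your explanation of case (iii) misattributes the mechanism. Finally, in both your argument and the equality analysis generally, one must still pass from ``the shifted complex $\Delta$ is the extremal near-cone'' back to ``$K$ itself is $\Delta^{[m]}_{(k)}\langle d\rangle$'', using that shifting preserves the $f$-vector; this transfer step should be made explicit.
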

 
 By the above Theorem, the set $\Sigma^{tb}(m,d)$
 is independent on the coefficients $\mathbb{F}$.

\begin{rem}
It is also meaningful to ask what kind of simplicial complexes with $m$ vertices have the minimum total Betti number. But the answer is just all the acyclic simplicial complexes which are numerous; see Kalai~\cite{Kal85} for more discussion on this subject.
\end{rem}

  There is another family of integers associated
  to a simplicial complex $K$ called bigraded Betti numbers, which are derived from the Stanley-Reisner ring of $K$. Recall the \emph{Stanley-Reisner ring} of $K$ over 
$\mathbb{F}$ (see Stanley~\cite{Stanley07}) is 
\[ \mathbb{F}[K]= \mathbb{F}[v_1,\cdots,v_m] \slash \mathcal{I}_K\]
where $\mathcal{I}_K$ is the ideal in the polynomial ring $\mathbb{F}[v_1,\cdots,v_m]$ generated by all the square-free monomials
$v_{i_1}\cdots v_{i_s}$ where $\{i_1,\cdots , i_s\}$ is not a simplex of $K$. Since $\mathbb{F}[K]$ is naturally a module over
$\mathbb{F}[v_1,\cdots,v_m]$, by the standard construction in
homological algebra, we obtain a canonical 
algebra $\mathrm{Tor}_{\mathbb{F}[v_1,\cdots, v_m]}(\mathbb{F}[K],\mathbb{F})$ from $\mathbb{F}[K]$, where $\mathbb{F}$
is considered as the trivial $\mathbb{F}[v_1,\cdots, v_m]$-module.
Moreover, there is a bigraded $\mathbb{F}[v_1,\cdots,v_m]$-module structure on $\mathrm{Tor}_{\mathbb{F}[v_1,\cdots, v_m]}(\mathbb{F}[K],\mathbb{F})$ (see~\cite{Stanley07}):   
  \[ \mathrm{Tor}_{\mathbb{F}[v_1,\cdots, v_m]}(\mathbb{F}[K],\mathbb{F}) = \bigoplus_{i,j\geq 0} \mathrm{Tor}^{-i,2j}_{\mathbb{F}[v_1,\cdots, v_m]}(\mathbb{F}[K],\mathbb{F}) \] 
  where $\mathrm{deg}(v_i)=2$ for each $1\leqslant i \leqslant m$. The integers 
  $$\beta^{-i,2j}(\mathbb{F}(K)) := \dim_{\mathbb{F}}\mathrm{Tor}^{-i,2j}_{\mathbb{F}[v_1,\cdots, v_m]}(\mathbb{F}[K],\mathbb{F})$$
  are called
the \emph{bigraded Betti numbers of $K$ with $\mathbb{F}$-coefficients}.      
   Note that unlike Betti numbers of $K$, bigraded Betti numbers are not topological invariants, but only combinatorial invariants of $K$ in general.
     
     \begin{defi} \label{Def:Bigraded-Betti}
   The \emph{total bigraded Betti number} of $K$ with $\mathbb{F}$-coefficients is
       \begin{equation*}
         \widetilde{D}(K;\mathbb{F}) = \sum_{i,j} \beta^{-i,2j}(\mathbb{F}(K)) = \dim_{\mathbb{F}} \mathrm{Tor}_{\mathbb{F}[v_1,\cdots, v_m]}(\mathbb{F}[K],\mathbb{F}). 
         \end{equation*}
     \end{defi}
 
By Hochster's formula (see Hochster~\cite{Hoc77} or~\cite[Theorem 4.8]{Stanley07}), the bigraded Betti numbers of $K$ can also be computed from the homology groups of the full subcomplexes of $K$:
 \begin{equation}\label{Equ:Hochster}
   \beta^{-i,2j}(\mathbb{F}(K)) = \sum_{J\subseteq [m], |J|=j} 
   \dim_{\mathbb{F}} \widetilde{H}_{j-i-1}(K|_J;\mathbb{F}).
 \end{equation} 
  
 So we can also express $\widetilde{D}(K;\mathbb{F})$ as
    \begin{equation} \label{Equ:D-tilde-K-J}
    \widetilde{D}(K;\mathbb{F})= \sum_{J\subseteq [m]} \widetilde{tb}(K|_J;\mathbb{F}). 
    \end{equation}
    This explains why we put ``$\, \widetilde{\ }\, $'' in the notation
    $\widetilde{D}(K;\mathbb{F})$.\n
 
  \textbf{Question 3:} For each $0\leqslant d < m$, which $d$-dimensional simplicial complexes with $m$ vertices have the minimum total 
  bigraded Betti number over a field $\mathbb{F}$ among all the $d$-dimensional simplicial complexes with $m$ vertices?  
  We readily call such kind of simplicial complexes
  \emph{$\widetilde{D}$-minimal} (over $\mathbb{F}$).\n
  
 It follows from Cao and L\"u~\cite[Theorem 1.4]{CaoLu12} or Ustinovsky~\cite[Theorem 3.2]{Uto11} that
 there is a universal lower bound of $\widetilde{D}(K;\mathbb{F})$ (see~\eqref{Equ:Uni-Lower-bound}):
  \begin{equation*} 
     \widetilde{D}(K;\mathbb{F}) \geqslant 2^{m-d-1},
     \, K\in \Sigma(m,d).
     \end{equation*}

 \begin{defi}[Tight Simplicial Complex] \label{Def:Tight-Complex}
   Let $K$ be a $d$-dimensional simplicial complex with $m$ vertices. We call $K$ \emph{tight} (over $\mathbb{F}$)
   if $\widetilde{D}(K;\mathbb{F}) = 2^{m-d-1}$.
   \end{defi}
  
 We classify all the tight simplicial complexes in the following theorem.
  
   \begin{thm}[see Theorem~\ref{Thm-Main-3}] \label{Thm:Main-3}
    A finite simplicial complex
     $K$ is tight if and only if $K$ is
     of the form $ \partial \Delta^{[n_1]}*\cdots*\partial \Delta^{[n_k]}$ or $\Delta^{[r]} * \partial \Delta^{[n_1]}*\cdots*\partial \Delta^{[n_k]}$ for some positive integers
     $n_1,\cdots, n_k$ and $r$.
  \end{thm}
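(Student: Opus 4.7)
The \emph{if} direction rests on multiplicativity of $\widetilde{D}$ under simplicial joins. If $K=K_{1}*K_{2}$ with vertex sets $V_{1}\sqcup V_{2}$, then every full subcomplex factors as $K|_{J_{1}\sqcup J_{2}}=K_{1}|_{J_{1}}*K_{2}|_{J_{2}}$; the classical join formula $\widetilde{H}_{i}(A*B)\cong\bigoplus_{p+q=i-1}\widetilde{H}_{p}(A)\otimes\widetilde{H}_{q}(B)$ yields $\widetilde{tb}(A*B)=\widetilde{tb}(A)\,\widetilde{tb}(B)$, and summation via~\eqref{Equ:D-tilde-K-J} gives $\widetilde{D}(K_{1}*K_{2})=\widetilde{D}(K_{1})\,\widetilde{D}(K_{2})$. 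Since every nonempty full subcomplex of $\Delta^{[r]}$ is a simplex, only $J=\varnothing$ contributes and $\widetilde{D}(\Delta^{[r]})=1$; for $\partial\Delta^{[n]}$ only $J=\varnothing$ and $J=[n]$ contribute a $1$ each, giving $\widetilde{D}(\partial\Delta^{[n]})=2$. Because $m-d-1$ is additive under joins, any $K$ of the prescribed form achieves $\widetilde{D}(K)=2^{m-d-1}$, i.e.\ is tight.

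For the \emph{only if} direction I would induct on $m$. The base case $m=1$ is trivial, and when $K=\Delta^{[m]}$ the statement holds with $r=m$, $k=0$; otherwise $K$ has a minimal non-face $F\subsetneq[m]$. The crux is the combinatorial

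\textbf{Key Claim.} \emph{The minimal non-faces of any tight simplicial complex are pairwise vertex-disjoint.}

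Granting it, fix any minimal non-face $F$; all others then lie in $[m]\setminus F$. Then $K=\partial\Delta^{F}*K|_{[m]\setminus F}$ by a direct check: for $\sigma\in K$, $\sigma\cap F\subsetneq F$ since $F\notin K$; conversely any $\sigma_{1}\sqcup\sigma_{2}$ with $\sigma_{1}\subsetneq F$ and $\sigma_{2}\in K|_{[m]\setminus F}$ lies in $K$, because a minimal non-face contained in it would either equal $F$ (not contained in $\sigma_{1}$) or lie in $[m]\setminus F$ (hence in $\sigma_{2}\in K$, impossible). Multiplicativity together with additivity of $m-d-1$ forces $\widetilde{D}(K|_{[m]\setminus F})=\widetilde{D}(K)/2=2^{(m-|F|)-\dim K|_{[m]\setminus F}-1}$, so $K|_{[m]\setminus F}$ is tight with strictly fewer vertices and the inductive hypothesis completes the argument.

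The main obstacle is the Key Claim. My plan is: assume two minimal non-faces $F_{1},F_{2}$ share a vertex $v$ and derive $\widetilde{D}(K)>2^{m-d-1}$. Writing $L=\mathrm{Link}_{K}(v)$, $K'=K|_{[m]\setminus v}$, and $N(v)$ for the neighbours of $v$, the Mayer--Vietoris sequence applied to $K|_{J\cup v}=\mathrm{Star}_{K|_{J\cup v}}(v)\cup K|_{J}$ (the star being the contractible cone $v*L|_{J}$) yields
\[
\widetilde{tb}(K|_{J\cup v})=\widetilde{tb}(K|_{J})+\widetilde{tb}(L|_{J})-2r(J),\qquad r(J)\geqslant 0,
\]
where $r(J)$ is the total rank of the natural map $\widetilde{H}_{*}(L|_{J})\to\widetilde{H}_{*}(K|_{J})$. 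Summing over $J\subseteq[m]\setminus v$ gives the identity
\[
\widetilde{D}(K)=2\,\widetilde{D}(K')+2^{m-1-|N(v)|}\,\widetilde{D}(L)-2R,\qquad R=\sum_{J}r(J).
\]
Since $F_{1}\setminus\{v\}$ and $F_{2}\setminus\{v\}$ are minimal non-faces of $L$, one can either apply the claim inductively to $L$ (when they still overlap) or, in the delicate case $F_{1}\cap F_{2}=\{v\}$, extract strict slack from $K'$ or $R$: the vertex $v$ being shared by two distinct non-faces should force extra Betti contributions in specific $K|_{J\cup v}$ beyond what the universal lower bounds of Ustinovsky~\cite{Uto11}/Cao--L\"u~\cite{CaoLu12} permit for $K'$ and $L$. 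Tracking these contributions precisely --- pinpointing exactly which $J$ make $r(J)>0$ and how they lower $R$ below the saturating value --- is the technical heart of the proof.
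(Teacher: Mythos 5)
Your ``if'' direction is correct and essentially matches the paper's (multiplicativity of $\widetilde{D}$ under joins plus $\widetilde{D}(\Delta^{[r]})=1$, $\widetilde{D}(\partial\Delta^{[n]})=2$, and additivity of $m-d-1$). The reduction of the ``only if'' direction to your Key Claim is also sound: complexes whose minimal non-faces are pairwise disjoint are exactly the simplex--sphere joins, and the join factorization $K=\partial\Delta^{F}*K|_{[m]\setminus F}$ together with multiplicativity does pass tightness to $K|_{[m]\setminus F}$, so induction would close. The problem is that the Key Claim carries essentially the entire content of the theorem, and you do not prove it. Your Mayer--Vietoris identity $\widetilde{D}(K)=2\widetilde{D}(K')+2^{m-1-|N(v)|}\widetilde{D}(L)-2R$ is correct as bookkeeping, but the step that would turn two overlapping minimal non-faces into the strict inequality $\widetilde{D}(K)>2^{m-d-1}$ is only described as a plan; you yourself flag the case $F_{1}\cap F_{2}=\{v\}$ (where $F_{1}\setminus v$ and $F_{2}\setminus v$ become \emph{disjoint} non-faces of the link, so no inductive contradiction is available there) as unresolved. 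Moreover, even the inductive appeal to $L=\mathrm{Link}_{K}v$ presupposes that links of tight complexes are tight, which is itself a nontrivial fact: the paper extracts it (Lemma~\ref{lem:uto}) from the internal structure of Ustinovsky's proof of $\widetilde{D}(K)\geq 2^{m-\mathrm{mdim}(K)-1}$, not from the inequality alone.

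For comparison, the paper's route avoids any direct analysis of non-faces: it first shows a tight complex is pure (via the refined bound through $\mathrm{mdim}$), connected unless it is $S^{0}$, and has all links tight; induction then makes every vertex link a sphere join or simplex--sphere join. In the first case $K$ is a pseudomanifold and the Yu--Masuda theorem (\cite[Theorem 3.1]{YuMas21}) identifies it as a sphere join; in the second case a vertex $w$ in the simplex factor of some link is shown, via $\mathrm{mdim}(K\backslash w)=d-1$ and $\widetilde{D}(K\backslash w)=\widetilde{D}(K)$, to be a cone point, reducing to $K\backslash w$. That the published argument needs both the $\mathrm{mdim}$ refinement and a separate structural theorem about pseudomanifolds is a strong indication that your Key Claim is not a routine lemma but a reformulation of the theorem itself; as written, your proposal has a genuine gap at its technical heart.
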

  
 It is a convention to let $\partial\Delta^{[1]}=\varnothing$ and
 $K*\varnothing = K$. Theorem~\ref{Thm:Main-3} implies that the tightness of a simplicial complex is independent on the coefficient field
 $\mathbb{F}$. \n
 
 From Theorem~\ref{Thm:Main-3}, we can easily deduce that if $K\in \Sigma(m,d)$ is tight, it is necessary that $\left[ \frac{m-1}{2}\right]\leq d$. In particular, the equality
 $\left[ \frac{m-1}{2}\right]= d$ is achieved by $\partial \Delta^{[2]}*\partial \Delta^{[2]} *\cdots*\partial \Delta^{[2]}$ when $m$ is even and by $\Delta^{[1]}*\partial \Delta^{[2]}*\partial \Delta^{[2]} *\cdots*\partial \Delta^{[2]}$ when $m$ is odd. Conversely, for any $(m,d)$ 
 with $\left[ \frac{m-1}{2}\right]\leq d \leq m-1$, there always exists
 a tight simplicial complex $K\in \Sigma(m,d)$.
 So if $\left[ \frac{m-1}{2}\right] \leq d \leq m-1$,
 the $\widetilde{D}$-minimal simplicial complexes in
 $\Sigma(m,d)$ are exactly all the tight simplicial complexes.
   \n
   
   But when $\left[ \frac{m-1}{2}\right] > d$, a $\widetilde{D}$-minimal simplicial complex in $\Sigma(m,d)$ is never tight, and it seems to us that there is no very good way to describe $\widetilde{D}$-minimal simplicial complexes in general. One reason is that
the full subcomplexes of a $\widetilde{D}$-minimal simplicial complex may not be $\widetilde{D}$-minimal. For example, by exhausting all the $33$ members of
$\Sigma(5,1)$, we find that all the $\widetilde{D}$-minimal simplicial complexes in $\Sigma(5,1)$ are $K_{2,3}$ and $C_5$ (see Figure~\ref{p:K1K2}) whose $\widetilde{D}$-value is $12$.  Note that none of the full subcomplexes of $C_5$ on four vertices are $\widetilde{D}$-minimal. 
 \begin{figure}[h]
         \includegraphics[width=0.46\textwidth]{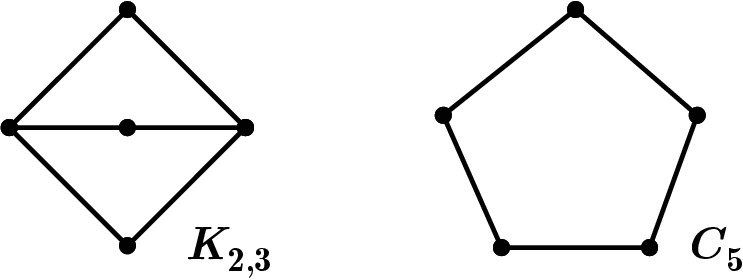}\\
          \caption{$\widetilde{D}$-minimal $1$-dimensional simplicial complexes with $5$ vertices}\label{p:K1K2}
      \end{figure} 
      
      This example suggests that we may not be able to inductively construct all the $\widetilde{D}$-minimal simplicial complexes
with $m$ vertices from the $\widetilde{D}$-minimal ones with $m-1$ vertices. In addition, it is not clear how to
compute the minimal value of $\widetilde{D}(\cdot)$
on $\Sigma(m,d)$ when $\left[ \frac{m-1}{2}\right] > d$ except exhausting all the members.\n

   Moreover, we can ask the following question 
   parallel to Question 1 for bigraded Betti numbers.\n
    
  \textbf{Question 4:} For a positive integer $m$, which simplicial complexes with $m$ vertices have the
  maximum total bigraded Betti numbers among
  all the simplicial complexes with $m$ vertices?\nn
  
  In Section~\ref{Sec-Maximum-D}, we prove the following theorem which answers Question 4.
 Let
  \begin{equation} \label{Equ:g-def}
    g(m,d)=\sum_{j=d+1}^{m}\binom{m}{j}\binom{j-1}{d},\ 0\leq d< m.
    \end{equation}
  
  \begin{thm}[Theorem~\ref{Thm-Main-4}]
   If $K$ is a simplicial complex with $m$ vertices, then
$$
\widetilde{D}(K;\mathbb{F}) \leqslant g\left(m,\left[\frac{m-1}{3}\right]\right)+1
$$
 for any field $\mathbb{F}$, where the equality holds if and only if 
$K=\Delta_{\left(\left[\frac{m-1}{3}\right]-1\right)}^{[m]}$.
\end{thm}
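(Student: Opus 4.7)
My plan is to compute $\widetilde{D}$ at the proposed maximiser, show no other skeleton does better, and then argue that no other simplicial complex beats this candidate.

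\emph{Value at the candidate.} I would first evaluate $\widetilde{D}(\Delta_{(k)}^{[m]};\mathbb{F})$ via the Hochster decomposition~\eqref{Equ:D-tilde-K-J}. For any $J\subseteq[m]$, the full subcomplex of $\Delta_{(k)}^{[m]}$ on $J$ equals $\Delta_{(k)}^{[J]}$ when $|J|\geq k+2$, homotopy equivalent to a wedge of $\binom{|J|-1}{k+1}$ spheres of dimension $k$; and for $1\leq|J|\leq k+1$ it is the contractible simplex $\Delta^{[J]}$. Combined with $\widetilde{tb}(\varnothing)=1$, this gives
\[
\widetilde{D}(\Delta_{(k)}^{[m]};\mathbb{F}) = 1 + \sum_{j=k+2}^{m}\binom{m}{j}\binom{j-1}{k+1} = 1 + g(m,k+1),
\]
so $k=\left[\frac{m-1}{3}\right]-1$ reproduces the target value $g(m,\left[\frac{m-1}{3}\right])+1$.

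\emph{Best skeleton.} From~\eqref{Equ:g-def} a short manipulation yields the generating function
\[
\sum_{d\geq 0} g(m,d)\,y^{d} = \frac{(2+y)^{m}-1}{1+y},
\]
equivalent to the recursion $g(m,d)+g(m,d-1)=\binom{m}{d}2^{m-d}$ for $d\geq 1$ with $g(m,0)=2^{m}-1$. Writing $g(m,d+1)-g(m,d)=\binom{m}{d+1}2^{m-d-1}-2g(m,d)$ and tracking its sign via the recursion shows that the difference is positive precisely when $d<\left[\frac{m-1}{3}\right]$, which pins down the unique maximiser.

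\emph{Upper bound for arbitrary $K$.} Writing $\widetilde{D}(K;\mathbb{F})=\sum_{J\subseteq[m]}\widetilde{tb}(K|_J;\mathbb{F})$, for each $J$ the Bj\"orner--Kalai theorem~\Cref{Thm-BjrnKal} applied to $K|_J$ controls $\widetilde{tb}(K|_J)$, and the estimate can be sharpened using~\Cref{Thm-Main-2} together with $\dim K|_J\leq \dim K$. Summing these pointwise estimates produces a function of $m$ and $\dim K$; maximising over $\dim K$ should yield $g(m,\left[\frac{m-1}{3}\right])+1$. For the equality case, the rigidity part of~\Cref{Thm-BjrnKal} forces each sufficiently large $K|_J$ to coincide with a specific $\Delta_{(k)}^{[J]}$, and compatibility of the resulting non-face data across varying $J$ forces $K=\Delta_{(k)}^{[m]}$ for a unique $k$, which the preceding paragraph identifies as $\left[\frac{m-1}{3}\right]-1$.

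\emph{Main obstacle.} The third step is by far the hardest: a pure termwise application of~\Cref{Thm-BjrnKal} overshoots the claimed bound because it ignores the constraint that all the $K|_J$ arise from a single $K$. Closing this gap requires exploiting the coupling between the $K|_J$ for different $J$'s, for instance through a compression or shifting argument that transforms $K$ into a skeleton without decreasing $\widetilde{D}$, or through a Mayer--Vietoris bookkeeping relating $\widetilde{tb}(K|_{J\cup\{v\}})$ to $\widetilde{tb}(K|_J)$ and $\widetilde{tb}(\mathrm{Link}_K(v)|_J)$. This structural interdependence carries the essential technical content of the theorem.
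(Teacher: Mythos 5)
Your first two steps are essentially the paper's: the computation $\widetilde{D}(\Delta_{(k)}^{[m]})=g(m,k+1)+1$ is correct, and your recursion $g(m,d)+g(m,d-1)=\binom{m}{d}2^{m-d}$ is exactly the paper's identity \eqref{gmd-1}. But even there you understate the work: the recursion only hands you the \emph{two-step} differences $g(m,d)-g(m,d-2)$ cheaply, which localizes the maximum to two or three adjacent values of $d$; deciding between the adjacent candidates (e.g.\ showing $g(3n,n-1)>g(3n,n)$ but $g(3n+1,n-1)<g(3n+1,n)$) is not a sign-chase through the recursion --- the paper needs the integral representation $g(m,d-1)/\binom{m}{d}=\int_0^1 dx^{d-1}(1+x)^{m-d}\,\mathrm{d}x$ and a Cauchy--Schwarz estimate to settle it (Lemma~\ref{lem:calc}). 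So ``tracking its sign via the recursion'' is a real gap, though a fillable one.

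The decisive gap is your third step, which you yourself flag as the main obstacle: you never actually reduce an arbitrary $K$ to a skeleton, and the termwise Bj\"orner--Kalai bound on each $\widetilde{tb}(K|_J)$ genuinely overshoots (the optimal skeleton dimension for $K|_J$ varies with $|J|$, so no single $K$ attains all the termwise bounds). The paper closes this with a short symmetrization argument that you do not have. It first proves the submodularity $\widetilde{D}(K)+\widetilde{D}(L)\leq\widetilde{D}(K\cap L)+\widetilde{D}(K\cup L)$ (Lemma~\ref{lem:mv2}), not by bookkeeping over full subcomplexes but by passing to real moment-angle complexes: $\R\ZZ_{K\cup L}=\R\ZZ_K\cup\R\ZZ_L$, $\R\ZZ_{K\cap L}=\R\ZZ_K\cap\R\ZZ_L$, and $\widetilde{D}(K)=tb(\R\ZZ_K)$ by \eqref{Equ:DK-ZK}, so Mayer--Vietoris applies in one stroke. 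Then one takes $K$ to be a maximal (or minimal) element of the poset of maximizers of $\widetilde{D}$ on $\Sigma(m)$: for any permutation $\phi$ of $[m]$, submodularity forces $K\cup\phi K$ and $K\cap\phi K$ to be maximizers as well, and extremality forces $K=\phi K$ for all $\phi$, i.e.\ $K=\Delta^{[m]}_{(d)}$. Since both the maximal and minimal maximizers are skeleta and Lemma~\ref{lem:calc} gives a unique optimal $d$, the maximizer is unique. Neither of your suggested repairs (a compression/shifting argument preserving $\widetilde{D}$, or a link-based Mayer--Vietoris recursion) is developed, and the shifting route is not known to work here since algebraic shifting is not known to preserve or increase $\widetilde{D}$. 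As written, the proposal establishes the value at the candidate but not the inequality or the rigidity for general $K$.
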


 \begin{rem}
  Parallelly to Question 2, we can also ask for each
  $0\leqslant d < m$,
 which simplicial complexes $K\in \Sigma(m,d)$ have the maximum total bigraded Betti number over a field $\mathbb{F}$ among 
 all the members of $\Sigma(m,d)$. But we do not know the complete answer of this question yet. The question can be turned into a very complicated combinatorial extremum problem.
 \end{rem}
 
 The paper is organized as follows. In Section~\ref{Sec-TB}, we first write an easy argument using Mayer-Vietoris sequence to answer Question 1. Then we use the theory of algebraic shifting of simplicial complexes from~\cite{BjrnKal88} and some results on Sperner families from~\cite{Lih80} and~\cite{Gri82} to give a complete answer to Question 2.
 In Section~\ref{Sec-Minimum-D}, we classify all the tight simplicial complexes using some results from~\cite{Uto11} and~\cite{YuMas21}. In Section~\ref{Sec-Maximum-D}, we give a complete answer to Question 4 via some combinatorial inequality
 proved in the appendix.
 
\vskip .4cm
  
   \section{Simplicial complexes with the maximum total Betti number} \label{Sec-TB}
 
 In this section, we first give an alternative proof of Theorem~\ref{Thm-Main-1} which answers Question 1.  Our approach is different from~\cite{BjrnKal88}.
 
 \begin{lem}\label{lem:mv}
For any two subcomplexes $K,L$ of $\Delta^{[m]}$, one always have
	\begin{equation} \label{Equ:tb-equ}
	 \widetilde{tb}(K)+\widetilde{tb}(L) \leq \widetilde{tb}(K \cap L)+ \widetilde{tb}(K \cup L).
	 \end{equation}
\end{lem}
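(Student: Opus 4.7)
The plan is to deduce the inequality from the reduced Mayer-Vietoris long exact sequence associated to the cover $K \cup L$, combined with a standard dimension estimate for long exact sequences. Concretely, one would use the augmented form
\[ \cdots \to \widetilde{H}_i(K\cap L) \to \widetilde{H}_i(K) \oplus \widetilde{H}_i(L) \to \widetilde{H}_i(K\cup L) \to \widetilde{H}_{i-1}(K\cap L) \to \cdots \]
which is exact for all $i \geq -1$ (terminating in $\widetilde{H}_{-1}(K\cup L) \to 0$). With the paper's convention $\widetilde{\beta}_{-1}(\varnothing) = 1$ and $\widetilde{\beta}_{-1}(X) = 0$ for $X \neq \varnothing$, summing $\dim\widetilde{H}_i(\cdot)$ over all $i \geq -1$ recovers $\widetilde{tb}(\cdot)$ uniformly, regardless of whether any of $K$, $L$, or $K\cap L$ is empty.

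Next I would invoke the elementary fact that in any long exact sequence of finite-dimensional $\mathbb{F}$-vector spaces $\cdots \to A_i \xrightarrow{f_i} B_i \xrightarrow{g_i} C_i \xrightarrow{\partial_i} A_{i-1} \to \cdots$, exactness at $B_i$ gives $\dim B_i = \dim\mathrm{im}(f_i) + \dim\mathrm{im}(g_i) \leq \dim A_i + \dim C_i$. Applied at every index of the Mayer-Vietoris sequence above, this yields
\[ \widetilde{\beta}_i(K) + \widetilde{\beta}_i(L) \leq \widetilde{\beta}_i(K\cap L) + \widetilde{\beta}_i(K\cup L) \]
for each $i \geq -1$, and summing these inequalities over $i$ produces precisely \eqref{Equ:tb-equ}.

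I do not foresee any real obstacle here; the only point demanding mild care is the bookkeeping when one of the subcomplexes (in particular $K\cap L$) is empty, which is why I would use the augmented Mayer-Vietoris sequence throughout rather than truncating at $i=0$ and treating the empty-intersection case by hand. All such boundary cases collapse to routine checks once this convention is adopted, so the proof reduces essentially to citing Mayer-Vietoris plus the observation that the middle term of three consecutive terms in a long exact sequence has dimension bounded by the sum of its neighbors' dimensions.
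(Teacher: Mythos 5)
Your proposal is correct and is essentially the paper's own argument: both apply the Mayer--Vietoris long exact sequence for $(K,L)$ and the standard observation that the middle of three consecutive terms in a long exact sequence has dimension at most the sum of its neighbors, then sum over degrees. The only difference is that you run the augmented sequence down to degree $-1$ to absorb the empty-intersection case uniformly, whereas the paper sums only over $j\geq 0$; this is a harmless refinement that does not change the substance.
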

\begin{proof}
 By the following Mayer-Vietoris sequence for $(K, L)$ :  
$$\cdots \widetilde{H}_{j+1}(K \cup L) \overset{d_{j+1}}{\longrightarrow} \widetilde{H}_j(K \cap L) \longrightarrow \widetilde{H}_j(K) \oplus \widetilde{H}_j(L) \longrightarrow \widetilde{H}_j(K \cup L) \overset{d_j}{\longrightarrow} \widetilde{H}_{j-1}(K \cap L)  \cdots,$$ 
we obtain:
$$\widetilde{\beta}_j(K)+\widetilde{\beta}_j(L)\leq\widetilde{\beta}_j(K \cap L)+\widetilde{\beta}_j(K \cup L), \ \forall j\geq 0.$$
Then sum it over all $j\geq 0$, we get the desired inequality.
\end{proof}
 
 Note that the equality in~\eqref{Equ:tb-equ} holds if and only if 
$\mathrm{Im}(d_j)=0$ for all $j\geqslant 0$ in the above
Mayer-Vietoris sequence.

  \begin{thm} \label{Thm-Main-1}
  Let $K$ be a simplicial complex with $m$ vertices. Then
 $$ \widetilde{tb}(K) \leqslant \binom{m-1}{\left[\frac{m-1}{2}\right]} .$$ 
Moreover,  $\widetilde{tb}(K)= \binom{m-1}{\left[\frac{m-1}{2}\right]}$ if and only if $K$ is isomorphic to the $k$-skeleton of an 
$(m-1)$-dimensional simplex, where $k=\frac{m-3}{2}$ if $m$ is odd and $k=\frac{m}{2}-1$ or $k=\frac{m}{2}-2$ if $m$ is even.
 \end{thm}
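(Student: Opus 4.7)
The plan is to induct on $m$, using as the engine a Mayer-Vietoris upper bound that complements Lemma~\ref{lem:mv}. First I would extract from the very same Mayer-Vietoris sequence, but by tracking the other direction of the alternating rank equality, the termwise estimate
\[ \widetilde{\beta}_j(K\cup L)\;\leq\;\widetilde{\beta}_j(K)+\widetilde{\beta}_j(L)+\widetilde{\beta}_{j-1}(K\cap L). \]
Applying this with $A=\mathrm{Star}_K(v)$ (which is contractible) and $B=K|_{[m]\setminus\{v\}}$, so that $A\cap B=\mathrm{Link}_K(v)$ and $A\cup B=K$, and then summing over $j\geq 0$, yields the key inequality valid for every vertex $v\in[m]$,
\[ \widetilde{tb}(K)\;\leq\;\widetilde{tb}(K|_{[m]\setminus\{v\}})\;+\;\widetilde{tb}(\mathrm{Link}_K(v)). \qquad (\star)\]

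After the trivial base $m=1$, pick any vertex $v$. Both $K|_{[m]\setminus\{v\}}$ and $\mathrm{Link}_K(v)$ have at most $m-1$ vertices, so by induction each is bounded by $\binom{m-2}{[(m-2)/2]}$. If $m$ is odd, Pascal's identity combined with the symmetry $\binom{m-2}{(m-1)/2}=\binom{m-2}{(m-3)/2}$ gives $\binom{m-1}{(m-1)/2}=2\binom{m-2}{(m-3)/2}$, and $(\star)$ closes the induction directly. When $m$ is even, however, the naive use of IH yields only $2\binom{m-2}{(m-2)/2}$, which strictly exceeds the target
\[\binom{m-1}{(m-2)/2}=\binom{m-2}{(m-2)/2}+\binom{m-2}{(m-4)/2},\]
so the induction must be sharpened.

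For the even case the plan is: if for some $v$ both $K|_{[m]\setminus\{v\}}$ and $\mathrm{Link}_K(v)$ attain the IH maximum, then the uniqueness clause of the IH forces each to be the \emph{same} specific $k$-skeleton of a simplex on $[m]\setminus\{v\}$ (with $k=(m-4)/2$). In particular, their vertex sets coincide with $[m]\setminus\{v\}$, and, using the automatic inclusion $\mathrm{Link}_K(v)\subseteq K|_{[m]\setminus\{v\}}$, the two are equal as subcomplexes. This means $K=v\ast K|_{[m]\setminus\{v\}}$ is a cone, hence contractible, so $\widetilde{tb}(K)=0$ and the bound is free. For all remaining complexes, some vertex therefore gives a strict inequality in the IH for either the link or the deletion, and a finer Pascal accounting over $v\in[m]$ (using that the drop from $\binom{m-2}{(m-2)/2}$ must match $\binom{m-2}{(m-2)/2}-\binom{m-2}{(m-4)/2}$) converts this qualitative statement into the quantitative bound required by $(\star)$.

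Equality is handled by retracing the induction: equality in $(\star)$ forces vanishing of the Mayer-Vietoris connecting homomorphisms, which together with the uniqueness part of the IH pins both the link and the deletion to specific skeleta of $\Delta^{[m-1]}$. Globalising this rigidity over all choices of $v$ identifies $K$ as the prescribed $k$-skeleton of $\Delta^{[m-1]}$ with the claimed parities of $k$. The main obstacle I foresee is precisely the even-$m$ step: turning the soft statement ``not both link and deletion are extremal at some vertex'' into the tight Pascal-sized numerical gap needed by $(\star)$ is where the proof demands the most care, since the second-largest value of $\widetilde{tb}(\cdot)$ on $m-1$ vertices is not controlled by the IH alone.
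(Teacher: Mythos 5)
Your inequality $(\star)$ is correct: taking $A=\mathrm{Star}_K(v)$ (contractible) and $B=K|_{[m]\setminus\{v\}}$ with $A\cap B=\mathrm{Link}_K(v)$ and $A\cup B=K$, the Mayer--Vietoris estimate $\widetilde{\beta}_j(A\cup B)\leq\widetilde{\beta}_j(A)+\widetilde{\beta}_j(B)+\widetilde{\beta}_{j-1}(A\cap B)$ does give $\widetilde{tb}(K)\leq\widetilde{tb}(K|_{[m]\setminus\{v\}})+\widetilde{tb}(\mathrm{Link}_K(v))$, and for odd $m$ the identity $\binom{m-1}{(m-1)/2}=2\binom{m-2}{(m-3)/2}$ closes the inductive step (and, as you note, the equality analysis then pins down $K$). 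But the even case contains a genuine gap that your sketch does not fill. Your cone observation only disposes of the complexes for which \emph{both} the deletion and the link attain the IH maximum at some vertex; for all the others you know merely that at each vertex at least one of the two terms is $\leq\binom{m-2}{(m-2)/2}-1$, so $(\star)$ yields only $\widetilde{tb}(K)\leq 2\binom{m-2}{(m-2)/2}-1$, whereas the target is $\binom{m-2}{(m-2)/2}+\binom{m-2}{(m-4)/2}$. The deficit $\binom{m-2}{(m-2)/2}-\binom{m-2}{(m-4)/2}$ grows like $\binom{m-2}{(m-2)/2}/m$, and the inductive hypothesis (which controls only the maximum of $\widetilde{tb}$ on $m-1$ vertices, not the second-largest value or any dimension-refined bound) gives you no way to produce it. No ``Pascal accounting over $v\in[m]$'' can conjure this: summing $(\star)$ over all vertices just reproduces the same weak bound $m$ times. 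To make this induction work you would need a strengthened hypothesis of the type ``$\widetilde{tb}(K)\leq\binom{m-1}{d+1}$ when $\dim K=d\leq[\frac{m}{2}]-1$, and $\leq\binom{m-1}{[m/2]}-\binom{d}{[m/2]}$ otherwise'' (this is Corollary~\ref{cor:num}), but the paper only obtains that via algebraic shifting and Sperner-family bounds, not from $(\star)$. The equality classification for even $m$ collapses together with the main step.

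For comparison, the paper avoids induction on $m$ altogether. It uses the complementary Mayer--Vietoris inequality $\widetilde{tb}(K)+\widetilde{tb}(L)\leq\widetilde{tb}(K\cap L)+\widetilde{tb}(K\cup L)$ applied to $L=\phi K$ for permutations $\phi$ of $[m]$: a maximal (resp.\ minimal) element of the poset $\Sigma^{tb}(m)$ must satisfy $K=K\cup\phi K$ (resp.\ $K=K\cap\phi K$) for all $\phi$, hence be a skeleton $\Delta^{[m]}_{(d)}$, whose total Betti number $\binom{m-1}{d+1}$ is computed directly. The only delicate point is then showing (for even $m$) that no complex strictly between $\Delta^{[m]}_{(\frac{m}{2}-2)}$ and $\Delta^{[m]}_{(\frac{m}{2}-1)}$ is extremal, which is done by a hands-on argument moving $(\frac{m}{2}-1)$-simplices around by permutations. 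That symmetrization route buys exactly what your induction lacks: it never needs to know anything about near-maximal complexes on fewer vertices.
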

  \begin{proof}
   Suppose the vertex set of $K$ is 
   $[m]=\{1,\cdots, m\}$ and consider $K$ as a subcomplex of $\Delta^{[m]}$. In the rest of the proof, we assume that $m\geqslant 3$.\n

   Note that each permutation $\phi$ of $[m]$ determines a simplicial isomorphism $\Delta^{[m]}\rightarrow \Delta^{[m]}$, still denoted by $\phi$.
Let $\phi K$ denote the image of $K$ under $\phi$. Then by Lemma~\ref{lem:mv}, 
\begin{equation} \label{Equ:tb-K}
\widetilde{tb}(K)+\widetilde{tb}(\phi K) \leqslant \widetilde{tb}(K \cap \phi K)+ \widetilde{tb}(K \cup \phi K).
\end{equation}

 \n

 Since $\phi K$ is homeomorphic to $K$,
 if $K$ is an element of $\Sigma^{tb}(m)$, then both $K \cap \phi K$ and $K \cup \phi K$ must also belong to $\Sigma^{tb}(m)$ since
 their vertex sets are also $[m]$. So in particular, if $K$ is a maximal (or minimal) element of $\Sigma^{tb}(m)$, we must have $K= K \cup \phi K$ (or $K=K\cap \phi K$) for every permutation $\phi$ of $[m]$, which implies $K=\phi K$. Then
 $K$ is the skeleton $\Delta^{[m]}_{(\dim(K))}$.
    An easy calculation shows  
     $$ \widetilde{tb}\big(\Delta^{[m]}_{(d)}\big) = \binom{m-1}{d+1}.$$
    So $\widetilde{tb}\big(\Delta^{[m]}_{(d)}\big)$  reaches the maximum $\binom{m-1}{\left[\frac{m-1}{2}\right]} $ when and only when
    $$d=  \begin{cases}
   \frac{m-3}{2} ,  &  \text{if $m$ is odd}; \\
   \frac{m}{2}-1\ \text{or}\ \frac{m}{2}-2,  &  \text{if $m$ is even}.
 \end{cases}  $$

  It remains to prove that there are no other elements in
  $\Sigma^{tb}(m)$ except the skeleta of $\Delta^{[m]}$ described in the theorem. Let $K$ be an arbitrary element
  of $\Sigma^{tb}(m)$. \n
 
 When $m$ is odd, since both the maximal and the minimal elements of $\Sigma^{tb}(m)$ are $\Delta^{[m]}_{(\frac{m-3}{2})}$, $K$ can only be $\Delta^{[m]}_{(\frac{m-3}{2})}$.
  \n
  
  When $m$ is even, the maximal element of $\Sigma^{tb}(m)$ is $\Delta^{[m]}_{(\frac{m}{2}-1)}$ while the minimal element is $\Delta^{[m]}_{(\frac{m}{2}-2)}$. So $\dim(K)$ 
  can only be $\frac{m}{2}-1$ or $\frac{m}{2}-2$. If $\dim(K)=\frac{m}{2}-2$, then $\Delta^{[m]}_{(\frac{m}{2}-2)}\subseteq K$ implies $K=\Delta^{[m]}_{(\frac{m}{2}-2)}$. 
 But if $\dim(K)=\frac{m}{2}-1$, the situation is a bit complicated. \n
 
 In the following, suppose $K$ is a minimal element of $\Sigma^{tb}(m,\frac{m}{2}-1)$. 
     If $K$ has only one
 $(\frac{m}{2}-1)$-simplex, then clearly $\widetilde{tb}(K)$ is less than $\widetilde{tb}\big(\Delta^{[m]}_{(\frac{m}{2}-2)} \big)$, which contradicts our assumption that $K \in \Sigma^{tb}(m)$. So $K$ must have at least two
 $(\frac{m}{2}-1)$-simplices. We have the following two cases.\n
 
  \begin{itemize}
  \item[Case 1:] If all the $(\frac{m}{2}-1)$-simplices of $K$ are disjoint, then $K$ could only have two $(\frac{m}{2}-1)$-simplices. But it is easy to check that $\widetilde{tb}(K)$ is less than $\widetilde{tb}\big(\Delta^{[m]}_{(\frac{m}{2}-2)} \big)$, which again contradicts the assumption $K \in \Sigma^{tb}(m)$.\n
  
  \item[Case 2:]  Suppose $\sigma$ and $\tau$ are two
   $(\frac{m}{2}-1)$-simplices in $K$ which share exactly $s$ vertices where $1\leqslant s \leqslant \frac{m}{2}-1$. Without loss of generality, we can assume
    \nn
  \qquad\ \ $\sigma=\{1,\cdots, \frac{m}{2}\},  \
  \tau=\{\frac{m}{2}-s+1,\cdots, \frac{m}{2}, \cdots, m-s\}$.\n

    Let $\overline{\phi}$ be an arbitrary permutation
    of $[m]$ that preserves $\sigma$. Then $K\cap \overline{\phi}K$ contains $\sigma$, and hence $\dim(K\cap  \overline{\phi}K)=\frac{m}{2}-1$. But by the
    minimality of $K$ in
    $\Sigma^{tb}(m,\frac{m}{2}-1)$, we must have $ \overline{\phi}K = K$. In particular, $\overline{\phi}(\tau)$ belongs to $K$. This implies that any
    $(\frac{m}{2}-1)$-simplex in $\Delta^{[m]}$ that 
    shares exactly $s$ vertices with $\sigma$ 
    must also belong to $K$. This is because
    the permutation $\overline{\phi}$ can send the face
     $\sigma\cap \tau$
    to any other face of $\sigma$ with $s$ vertices
      while mapping
    the face $\tau\backslash \sigma$ of $\tau$
    to any other simplex with $\frac{m}{2}-s$ vertices
    in the complement of $\sigma$ in $\Delta^{[m]}$.
    So $\overline{\phi}(\tau)$ can exhaust all the $(\frac{m}{2}-1)$-simplices in $\Delta^{[m]}$ that
     share exactly $s$ vertices with $\sigma$.
    Moreover, we can similarly prove
    that any
    $(\frac{m}{2}-1)$-simplex in $\Delta^{[m]}$ that 
    shares exactly $s$ vertices with $\tau$ 
    must also belong to $K$. \n
    
  Next, we prove that there exists a $(\frac{m}{2}-1)$-simplex $\tau'$ in $K$ which
  shares a $(\frac{m}{2}-2)$-face with $\sigma$.
  Indeed, we can just trade the $\frac{m}{2}-s$ vertices  $\tau\backslash \sigma=\{\frac{m}{2}+1,\cdots, m-s \}$ with $\{2,\cdots, \frac{m}{2}-s\}\cup\{ m \}$ and obtain another $(\frac{m}{2}-1)$-simplex $\tau'=\{ 2,\cdots, \frac{m}{2}, m \}$ which satisfies 
  $|\tau'\cap \tau|=s$. So $\tau'$ must belong to $K$
  and we have $|\tau'\cap \sigma|= \frac{m}{2}-1$.
\end{itemize}
\n

 From the above discussion, we see that if $m$ is even and $K$ is a minimal element
  in $\Sigma^{tb}(m,\frac{m}{2}-1)$, 
 we can always find a pair of $(\frac{m}{2}-1)$-simplices $\sigma$ and $\tau$ in $K$ which
  share a $(\frac{m}{2}-2)$-face. 
   Moreover, by the argument in Case 2, any
  $(\frac{m}{2}-1)$-simplex of $\Delta^{[m]}$ that
  intersects $\sigma$ or $\tau$ at a $(\frac{m}{2}-2)$-face must belong to $K$. So without loss of generality, assume
  \n
  \qquad\qquad\qquad\quad\ \ \  $\sigma=\{ 1,\cdots, \frac{m}{2} \},\ \tau=\{ 2,\cdots, \frac{m}{2}+1\}$.
  \n
  
  \textbf{Claim:} Every $(\frac{m}{2}-1)$-simplex of $\Delta^{[m]}$ must belong to $K$, i.e. $K=\Delta^{[m]}_{(\frac{m}{2}-1)}$.\n
  
  If a $(\frac{m}{2}-1)$-simplex $\xi$ of $\Delta^{[m]}$
  is disjoint from $\sigma$, then $\xi=\{ \frac{m}{2}+1,\cdots, m\}$. We can construct a sequence of 
  $(\frac{m}{2}-1)$-simplices $\sigma=\xi_0$, $\xi_1,\cdots,\xi_{\frac{m}{2}} = \xi$ as follows:\n
  
 \qquad\qquad\qquad\qquad $\xi_i =\{ i+1, \cdots, \frac{m}{2}+ i\} $,  $1\leq i \leq \frac{m}{2}$.
  \n
  
  Since $\xi_i\cap \xi_{i+1}$ is a 
   $(\frac{m}{2}-2)$-simplex for every $1\leqslant i <\frac{m}{2}$, we can prove that $\xi_1,\cdots, \xi_n=\xi$ must all belong to $K$ by iteratively using the argument
   in Case 2.
  \n 
  
  Similarly, if $\xi\cap \sigma \neq \varnothing$, we can trade the vertices in $\xi\backslash\sigma$ with the vertices in 
 $\sigma\backslash \xi $ one at a time to turn
 $\sigma$ to $\xi$. This proves the claim. 
 \n
 
  By the above claim, the minimal element $K$ of $\Sigma^{tb}(m,\frac{m}{2}-1)$ is $\Delta^{[m]}_{(\frac{m}{2}-1)}$ which is also maximal. So
  $\Sigma^{tb}(m,\frac{m}{2}-1)$ consists only of $\Delta^{[m]}_{(\frac{m}{2}-1)}$.
 Therefore, when $m$ is even, 
  $\Sigma^{tb}(m)$ must either be
   $\Delta^{[m]}_{(\frac{m}{2}-2)}$
  or $\Delta^{[m]}_{(\frac{m}{2}-1)}$. 
  The theorem is proved. 
  \end{proof}
    
   For any finite simplicial complex $K$ of dimension $d$ and a field $\mathbb{F}$, one can associate two integer vectors $f=(f_0,f_1,\cdots,f_d)$ and $\beta=(\widetilde{\beta}_0,\widetilde{\beta}_1,\cdots,\widetilde{\beta}_d)$, where $f_i$ is the number of $i$-simplices of $K$ and $\widetilde{\beta}_i$ is the $i$-th reduced Betti number (over $\mathbb{F}$) of $K$ ($i=0,1,\cdots,d)$. A remarkable result proved in~\cite{BjrnKal88} tells us that a collection of nonlinear relations along with the linear relation given by the Euler-Poincar\'e formula completely characterize the integer vectors which can arise as $f$ and $\beta$ for a simplicial complex. But from the description in~\cite{BjrnKal88} of the
   range of the $f$-vectors and $\beta$-vectors of all the simplicial complexes with $m$ vertices,
   it is still not clear what the answer of
   Question 2 should be. Indeed, when we fix the dimension $d$ of the simplicial complex $K$, the upper bound of $\widetilde{tb}(K)$ will depend on $d$ and become a little  complicated (see Corollary~\ref{cor:num}). So to obtain the answer of Question 2,
    we also need to use the machinery of shifting of
    simplicial complexes and the theory of Sperner family just as~\cite{BjrnKal88} did. 
   \n
   
     Now, let us recall some basic definitions
     that are needed for our argument.

   \begin{defi}[Sperner Family]
   Let $X$ be a finite set.
	A \emph{Sperner family of $X$} is a set $\mathcal{F}$ of subsets of $X$ that satisfies $A\nsubseteq B$ for distinct members of $\mathcal{F}$. 	
	Given a subset $Y \subseteq X$, a \emph{Sperner family of $X$ over $Y$} is a Sperner family $\mathcal{F}$ of $X$ where every member of $\mathcal{F}$ has nonempty intersection with $Y$. 
\end{defi}

 The following is a fundamental result of Sperner on the
 size of a Sperner family (see Sperner~\cite{Spern28} or
 Anderson~\cite[Theorem 1.2.2]{And02}).
 
\begin{thm}[Sperner's Theorem]
Let $\mathcal{F}$ be a Sperner family of subsets of a finite set $X$ where
$|X|=n$. Then $|\mathcal{F}|\leq \binom{n}{[n/2]}$.
If $n$ is even, the only Sperner family consisting of $\binom{n}{[n / 2]}$ subsets of $X$ is made up of all the $\frac{n}{2}$-subsets of $X$. If $n$ is odd, a Sperner family of size $\binom{n}{[n / 2]}$ consists of either all the $\frac{1}{2}(n-1)$-subsets or all the $\frac{1}{2}(n+1)$-subsets of $X$.
\end{thm}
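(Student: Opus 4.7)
The plan is to prove the theorem through the classical Lubell--Yamamoto--Meshalkin (LYM) inequality for the Boolean lattice on $X$, and then to pin down the equality cases -- particularly the more delicate odd case -- by means of the local LYM (shadow) inequality. Both tools reduce to short double counting arguments on maximal chains.

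First, I would establish LYM. Writing $n=|X|$, count pairs $(A,C)$ with $A\in\mathcal{F}$ and $C=(\varnothing=C_0\subsetneq C_1\subsetneq\cdots\subsetneq C_n=X)$ a maximal chain satisfying $C_{|A|}=A$. The Sperner condition guarantees that each of the $n!$ maximal chains contains at most one member of $\mathcal{F}$, while any fixed $A$ with $|A|=k$ lies in exactly $k!(n-k)!$ maximal chains. Dividing by $n!$ yields
$$\sum_{A\in\mathcal{F}}\frac{1}{\binom{n}{|A|}}\leq 1.$$
Since $\binom{n}{k}\leq\binom{n}{[n/2]}$ for every $k$, this immediately gives the bound $|\mathcal{F}|\leq\binom{n}{[n/2]}$.

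For the equality analysis, let $\mathcal{F}_k=\{A\in\mathcal{F}:|A|=k\}$. If $|\mathcal{F}|=\binom{n}{[n/2]}$, the above estimate forces $|A|\in\{[n/2],\lceil n/2\rceil\}$ for every $A\in\mathcal{F}$. When $n$ is even these two values coincide at $n/2$, so $\mathcal{F}\subseteq\binom{X}{n/2}$; combined with $|\mathcal{F}|=\binom{n}{n/2}$ this yields $\mathcal{F}=\binom{X}{n/2}$, completing the even case.

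The main obstacle is the odd case $n=2k+1$, where $\mathcal{F}$ may a priori split as $\mathcal{F}_k\sqcup\mathcal{F}_{k+1}$ and LYM alone does not distinguish such mixtures. To rule them out I would invoke the local LYM inequality: for any $\mathcal{A}\subseteq\binom{X}{k+1}$, the lower shadow $\partial\mathcal{A}\subseteq\binom{X}{k}$ satisfies $|\partial\mathcal{A}|/\binom{n}{k}\geq|\mathcal{A}|/\binom{n}{k+1}$, with equality iff $\mathcal{A}=\varnothing$ or $\mathcal{A}=\binom{X}{k+1}$. Since $\binom{n}{k}=\binom{n}{k+1}$, this yields $|\partial\mathcal{F}_{k+1}|\geq|\mathcal{F}_{k+1}|$, and because $\mathcal{F}$ is an antichain $\mathcal{F}_k$ and $\partial\mathcal{F}_{k+1}$ are disjoint subfamilies of $\binom{X}{k}$. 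Therefore
$$\binom{n}{k}\geq|\mathcal{F}_k|+|\partial\mathcal{F}_{k+1}|\geq|\mathcal{F}_k|+|\mathcal{F}_{k+1}|=\binom{n}{k}.$$
Both inequalities must be equalities, and the equality clause of local LYM then forces $\mathcal{F}_{k+1}\in\{\varnothing,\binom{X}{k+1}\}$; combined with the total count this yields $\mathcal{F}=\binom{X}{k}$ or $\mathcal{F}=\binom{X}{k+1}$. The local LYM inequality itself admits a short chain counting proof analogous to that of LYM; the delicate point is to handle its equality case cleanly, which is what ultimately distinguishes $\frac{1}{2}(n-1)$-subsets from $\frac{1}{2}(n+1)$-subsets in the odd case.
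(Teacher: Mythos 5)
The paper does not prove this statement at all: it quotes Sperner's theorem as a classical result, citing Sperner's original 1928 paper and Anderson's book, so there is no in-paper proof to compare against. Your argument is the standard textbook route (LYM inequality for the bound and the even equality case, local LYM for the odd equality case), and it is correct as a plan. The bound and the even case are fully argued. In the odd case the chain of inequalities $\binom{n}{k}\geq|\mathcal{F}_k|+|\partial\mathcal{F}_{k+1}|\geq|\mathcal{F}_k|+|\mathcal{F}_{k+1}|=\binom{n}{k}$ is right, and the only step you defer rather than execute is the equality clause of local LYM. To close it: counting pairs $(B,A)$ with $B\in\partial\mathcal{F}_{k+1}$, $A\in\mathcal{F}_{k+1}$, $B\subset A$ gives $(k+1)|\mathcal{F}_{k+1}|\leq(n-k)|\partial\mathcal{F}_{k+1}|$, and since $n=2k+1$ the two coefficients are equal; equality forces every $B\in\partial\mathcal{F}_{k+1}$ to have \emph{all} of its $(k+1)$-element supersets in $\mathcal{F}_{k+1}$. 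Hence if $A\in\mathcal{F}_{k+1}$ and $A'$ is any $(k+1)$-set with $|A\cap A'|=k$, then $A'\in\mathcal{F}_{k+1}$, and connectivity of the Johnson graph on $(k+1)$-subsets yields $\mathcal{F}_{k+1}=\binom{X}{k+1}$ whenever $\mathcal{F}_{k+1}\neq\varnothing$. With that paragraph added, your proof is complete and self-contained.
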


 A \emph{$k$-subset} of $X$ means a subset of order $k$. Another very useful construction in the study of combinatorics of simplicial complexes is the shifting operation.
 A \emph{shifting} operation is a map which assigns to every simplicial complex $K$ a shifted
simplicial complex $\Delta(K)$ with the same $f$-vector.

\begin{defi}[Shifted Complex]
 A simplicial complex $\Gamma$ with vertex set $[m]$ is
	called \emph{shifted} if for every simplex $\sigma=\{i_1,\cdots, i_s\}\in \Gamma$ where $i_1<\cdots<i_s$, any $\{ i'_1,\cdots, i'_s \}$ with $i'_1\leq i_1,\cdots, i'_s\leq i_s$ and
	$i'_1<\cdots<i'_s$ is also a simplex of $\Gamma$.	 
\end{defi}

A well-known (combinatorial) shifting operation, introduced by Erd\"os, Ko and Rado~\cite{ErKoRa61}, has been of great use in extremal set theory. Later, another shifting operation
 was introduced by Kalai in~\cite{Kal84} which preserves
 both the $f$-vector and the $\beta$-vector of a simplicial complex. 

\begin{thm}[see~\cite{Kal84} and~\cite{BjrnKal88}] \label{Thm:Shifted-Complex}
   Given a simplicial complex $K$ on $m$ vertices and a field $\mathbb{F}$, there exists a canonically defined shifted simplicial complex $\Delta=\Delta(K, \mathbb{F})$ on $[m]$ such that $f_i(\Delta;\mathbb{F})=f_i(K;\mathbb{F})$
   and $\widetilde{\beta}_i(\Delta;\mathbb{F})=\widetilde{\beta}_i(K;\mathbb{F})$ for all $i \geqslant 0$. 
\end{thm}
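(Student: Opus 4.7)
The plan is to realise $\Delta(K,\mathbb{F})$ via Kalai's algebraic shifting inside the exterior algebra. Let $E=\bigwedge \mathbb{F}^m$ with standard basis $e_1,\ldots,e_m$, and let $J_K\subset E$ be the exterior Stanley-Reisner ideal generated by the monomials $e_\sigma := e_{i_1}\wedge\cdots\wedge e_{i_s}$ as $\sigma=\{i_1<\cdots<i_s\}$ ranges over the non-faces of $K$; write $E_K=E/J_K$, graded by exterior degree. After a generic change of basis $f_i=\sum_j a_{ij}e_j$ — meaning a choice of $(a_{ij})\in GL_m(\mathbb{F})$ in a Zariski-open dense set (passing to $\overline{\mathbb{F}}$ if $\mathbb{F}$ is finite) — set $f_\sigma=f_{i_1}\wedge\cdots\wedge f_{i_s}$, order $s$-subsets of $[m]$ lexicographically, and define
\[
\Delta(K,\mathbb{F}):=\bigl\{\sigma\subseteq[m]\ :\ f_\sigma\notin J_K+\mathrm{span}\{f_\tau:|\tau|=|\sigma|,\ \tau<_{\mathrm{lex}}\sigma\}\bigr\}.
\]

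The first block of checks is formal. Canonicity (independence of the chosen generic $(a_{ij})$) follows because $\Delta(K,\mathbb{F})$ records the pivot positions of a matrix whose entries are polynomials in $(a_{ij})$, and these pivots are constant on a dense open locus. Shiftedness comes from how lex order interacts with exterior products of generic linear forms: replacing any index $i_k$ in $\sigma$ by a smaller one can only make $f_\sigma$ more ``lex-early'', so membership in the span of earlier $f_\tau$'s is inherited. Preservation of the $f$-vector is then immediate: by construction $\{f_\sigma:\sigma\in\Delta(K,\mathbb{F}),\ |\sigma|=s\}$ is a basis of $E_K$ in exterior degree $s$, and $\dim_{\mathbb{F}} (E_K)_s=f_{s-1}(K)$, so $f_i(\Delta(K,\mathbb{F}))=f_i(K)$ for each $i\geq 0$.

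The serious step is preservation of the reduced Betti numbers. The idea is that the reduced simplicial chain complex of $K$ can be recovered from $E_K$ as the complex with differential $d_\theta(x)=\theta\wedge x$ for the ``universal'' linear form $\theta=e_1+\cdots+e_m$: the degree-$s$ piece $(E_K)_s$ has a basis indexed by $(s-1)$-simplices, and left-wedge-with-$\theta$ is (up to signs) the simplicial coboundary, whence $\widetilde{H}^{\,s-1}(K;\mathbb{F})\cong \ker d_\theta / \operatorname{im} d_\theta$ in exterior degree $s$. The Betti numbers in that complex depend upper-semicontinuously on the linear form, so for generic $f_1$ they attain a minimum that is no larger than $\widetilde{\beta}_{s-1}(K)$; an elementary dimension count (the alternating sum is the reduced Euler characteristic, which is a topological invariant) forces equality. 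On the shifted side, genericity together with the lex choice of basis makes $d_{f_1}$ acquire a monomial ``initial'' form, and one reads off that the same kernel/image dimensions in exterior degree $s$ are exactly the reduced Betti numbers of $\Delta(K,\mathbb{F})$ in degree $s-1$.

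I expect the main obstacle to be this last step, specifically the semicontinuity argument together with the identification of $\Delta(K,\mathbb{F})$ as a generic initial object in an exterior Gröbner sense. One must be careful about characteristic issues (avoided by working generically and deforming), about the distinction between shifting with respect to lex versus reverse lex order, and about separating the purely linear-algebraic dimension count from the topological input (the Euler characteristic identity) that pins down equality. Once this is set, shiftedness, the $f$-vector equality, and canonicity are all straightforward consequences of the construction, and the theorem follows.
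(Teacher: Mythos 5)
The paper does not prove this theorem; it is quoted from \cite{Kal84} and \cite{BjrnKal88}, and your proposal is essentially an attempt to reconstruct Kalai's exterior algebraic shifting, which is indeed the construction those references use. The formal parts are set up correctly: the definition of $\Delta(K,\mathbb{F})$ via pivots of the generic monomials $f_\sigma$ modulo $J_K$, canonicity on a dense open locus, and the $f$-vector count $\dim_{\mathbb{F}}(E_K)_s=f_{s-1}(K)$ are all right (though closure of $\Delta(K,\mathbb{F})$ under taking subsets, and shiftedness, each require a genuine exchange/genericity argument rather than the ``lex-earliness'' heuristic you give; these are real lemmas in \cite{BjrnKal88}, not formalities).

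The gap is in the Betti-number step, in two places. First, the semicontinuity-plus-Euler-characteristic argument is logically invalid: from $b_s\leq\widetilde{\beta}_{s-1}(K)$ for every $s$ together with equality of alternating sums you cannot conclude termwise equality (decrease two consecutive Betti numbers by $1$ and the alternating sum is unchanged). The correct and simpler fact is that for \emph{any} linear form $f=\sum_i a_ie_i$ with all $a_i\neq 0$, the complex $((E_K)_\bullet,\,f\wedge\,)$ is isomorphic as a cochain complex to the reduced simplicial cochain complex of $K$ via the diagonal rescaling $e_\sigma\mapsto(\prod_{i\in\sigma}a_i)\,e_\sigma$; so the kernel-mod-image dimensions equal $\widetilde{\beta}_{s-1}(K)$ exactly, with no genericity or semicontinuity needed on this side. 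Second, the half you yourself flag as ``the main obstacle'' --- that the same rank data computes $\widetilde{\beta}_{s-1}(\Delta(K,\mathbb{F}))$ --- is where all the substance lies and is not supplied. The route in \cite{BjrnKal88} is to show that $\Delta(K,\mathbb{F})$ is a near-cone with apex $1$ (the vertex corresponding to the generic form $f_1$), to identify $B(\Delta(K,\mathbb{F}))$ in each cardinality $s$ with $\dim(E_K)_s$ minus the ranks of $f_1\wedge$ into and out of degree $s$, and then to invoke the count $\widetilde{\beta}_{s-1}(\Delta)=|\{S\in B(\Delta):|S|=s\}|$ (the paper's Lemma~\ref{Lem:B-Delta}). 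Without carrying out that identification, the equality $\widetilde{\beta}_i(\Delta(K,\mathbb{F}))=\widetilde{\beta}_i(K)$ is asserted rather than proved.
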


 Shifted complexes belong to a slightly larger class of
simplicial complexes called near-cones (see~\cite{BjrnKal88}).

\begin{defi}[Near-Cone]
	A simplicial complex $\Delta$ with vertex set $[m]$ is
	called a \emph{near-cone} if for every simplex $S \in \Delta$ and $2\leq j\leq m$, if $1 \notin S$ and $j \in S$, then $(S \backslash j) \cup \{1\} \in \Delta$. For a near-cone $\Delta$, define
\begin{equation} \label{Equ:B-Delta-def}
B(\Delta)=\{S \in \Delta \mid S \cup \{1\} \notin \Delta \} .
\end{equation}
\end{defi}

A very nice property of a near-cone $\Delta$ is that its total Betti number
can be easily computed by counting the number of elements in $B(\Delta)$.

\begin{lem}[\text{\cite[Lemma 4.2, Theorem 4.3]{BjrnKal88}}]
\label{Lem:B-Delta}
	If $\Delta$ is a near-cone on $[m]$, then
	\begin{itemize}
		\item[(i)] $B(\Delta)$ is a Sperner family of $\{ 2,\cdots,m \}$,
		\item[(ii)] every simplex $S \in B(\Delta)$ is maximal in $\Delta$,
		\item[(iii)] $\wt{tb}(\Delta)=\vert B(\Delta)\vert$.
	\end{itemize}
\end{lem}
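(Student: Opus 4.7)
The plan is to prove (i) and (ii) directly from the defining axiom of a near-cone by exchanging a vertex of a simplex with $1$, and to prove (iii) via a relative homology computation based on the pair $(\Delta, \{1\}*L)$, where $L=\mathrm{Link}_\Delta\{1\}$.

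For part (i), first observe that no $S\in B(\Delta)$ can contain the vertex $1$: otherwise $S\cup\{1\}=S\in\Delta$ would contradict the definition of $B(\Delta)$. So $B(\Delta)\subseteq 2^{\{2,\ldots,m\}}$. To verify the Sperner condition, suppose for contradiction that $S\subsetneq T$ with both $S,T\in B(\Delta)$, and pick any $j\in T\setminus S$. Since $1\notin T$ and $j\in T$, the near-cone axiom gives $(T\setminus\{j\})\cup\{1\}\in\Delta$, and since $S\subseteq T\setminus\{j\}$ this forces $S\cup\{1\}\in\Delta$, contradicting $S\in B(\Delta)$. Essentially the same manipulation handles part (ii): if $S\in B(\Delta)$ is strictly contained in some $T\in\Delta$, then either $1\in T$ (so $S\cup\{1\}\subseteq T$ gives $S\cup\{1\}\in\Delta$), or $1\notin T$ and picking $j\in T\setminus S$ produces $(T\setminus\{j\})\cup\{1\}\in\Delta$ by the near-cone axiom, which again contains $S\cup\{1\}$. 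Either way $S\cup\{1\}\in\Delta$, contradicting $S\in B(\Delta)$.

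For part (iii), set $\Delta_1=\{1\}*L=\mathrm{Star}_\Delta\{1\}$, a cone and hence contractible. The long exact sequence of the pair $(\Delta,\Delta_1)$, together with $\widetilde{H}_*(\Delta_1)=0$ and the injectivity of $H_0(\Delta_1)\to H_0(\Delta)$ (since $\Delta_1$ is nonempty and connected), yields $\widetilde{H}_i(\Delta)\cong H_i(\Delta,\Delta_1)$ for every $i$. The quotient chain complex $C_*(\Delta)/C_*(\Delta_1)$ has basis the simplices of $\Delta$ not already in $\Delta_1$: any simplex containing $1$ lies automatically in $\Delta_1$, and a simplex $S$ with $1\notin S$ lies in $\Delta_1$ exactly when $S\in L$, i.e., exactly when $S\cup\{1\}\in\Delta$; hence the remaining basis is precisely $B(\Delta)$. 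The key observation is then that the relative boundary vanishes: for any $S\in B(\Delta)$ and any $j\in S$, the near-cone axiom applied to $S$ gives $(S\setminus\{j\})\cup\{1\}\in\Delta$, so $S\setminus\{j\}\in L\subseteq\Delta_1$. Thus $H_i(\Delta,\Delta_1)$ is free on the $i$-simplices of $B(\Delta)$, and summing over $i$ yields $\widetilde{tb}(\Delta)=|B(\Delta)|$.

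The main obstacle is really just the careful bookkeeping in the chain-level identification: confirming that $\varnothing$ does not contribute to $B(\Delta)$ once vertex $1$ is in $\Delta$, and tracing the isomorphism $\widetilde{H}_i(\Delta)\cong H_i(\Delta,\Delta_1)$ through degree zero. Once these points are cleared and the near-cone axiom is invoked on each $S\in B(\Delta)$, the relative differential vanishes automatically and the count $\widetilde{tb}(\Delta)=|B(\Delta)|$ drops out.
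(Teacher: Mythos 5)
Your proof is correct. The paper does not prove this lemma at all --- it is quoted verbatim from Bj\"orner and Kalai \cite[Lemma 4.2, Theorem 4.3]{BjrnKal88} --- so there is no in-paper argument to compare against; what you have written is a clean self-contained substitute. Parts (i) and (ii) are exactly the intended elementary manipulations of the near-cone axiom. For (iii), your route via the pair $(\Delta,\mathrm{Star}_\Delta\{1\})$ is a homological version of Bj\"orner--Kalai's statement that a near-cone is homotopy equivalent to a wedge of spheres indexed by $B(\Delta)$: the star is a nonempty cone, so the reduced long exact sequence gives $\widetilde{H}_i(\Delta)\cong H_i(\Delta,\mathrm{Star}_\Delta\{1\})$, the relative chain complex is based on exactly the simplices $S$ with $S\cup\{1\}\notin\Delta$, and the near-cone axiom forces every facet of such an $S$ back into the star, killing the relative differential. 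You also correctly handle the two small bookkeeping points (that $1\notin S$ for $S\in B(\Delta)$, so the axiom applies to each $j\in S$, and that $\varnothing\notin B(\Delta)$ because $\{1\}\in\Delta$), so the count comes out right in all degrees.
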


The following proposition tells one what kind of 
Sperner families on $[m]$ can be realized by $B(\Delta)$ 
of a near-cone $\Delta$. 

\begin{prop}\label{prop:equiv}
	Let $\mathcal{F}$ be a Sperner family of $[m]\backslash \{1\}=\{2,\cdots,m\}$. Then the following statements are equivalent:
	\begin{itemize}
		\item[(i)] there exists a $d$-dimensional near-cone $\Delta$ with vertex set contained in $[m]$, such that $B(\Delta)=\mathcal{F};$
		\item[(ii)] there exists a subset $\{i_1,\cdots,i_d\}\subseteq \{2,\cdots,m\}$ such that no subset of $\{i_1,\cdots,i_d\}$ belongs to $\mathcal{F}$ and the order of each member of $\mathcal{F}$ is no greater than $d+1$.
	\end{itemize}
\end{prop}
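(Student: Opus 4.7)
The plan is to exploit Lemma~\ref{Lem:B-Delta} together with the defining near-cone move: extract the witness set in one direction by ``sliding'' a facet so that it contains $1$, and in the other direction build the near-cone by hand from $\mathcal{F}$ and the prescribed $d$-set.

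For (i)~$\Rightarrow$~(ii), I would pick any $d$-dimensional facet $\sigma$ of $\Delta$. If $1\notin\sigma$, choose $j\in\sigma$ and apply the near-cone axiom to produce $\sigma':=(\sigma\setminus\{j\})\cup\{1\}\in\Delta$, which is again a $d$-simplex and now contains $1$. Writing $\sigma'=\{1,i_1,\ldots,i_d\}$ with each $i_k\in\{2,\ldots,m\}$, the set $\{i_1,\ldots,i_d\}$ does the job: for every $T\subseteq\{i_1,\ldots,i_d\}$, the set $T\cup\{1\}$ is a face of $\sigma'$, hence lies in $\Delta$, so $T\notin B(\Delta)=\mathcal{F}$. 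The cardinality bound on members of $\mathcal{F}$ is immediate from Lemma~\ref{Lem:B-Delta}(ii): elements of $B(\Delta)$ are facets of $\Delta$, hence have at most $d+1$ vertices.

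For (ii)~$\Rightarrow$~(i), set $I=\{i_1,\ldots,i_d\}$ and define $\Delta$ by declaring $S\in\Delta$ iff either (a) $1\notin S$ and $S\subseteq I$ or $S\subseteq F$ for some $F\in\mathcal{F}$, or (b) $1\in S$ and $S\setminus\{1\}\subseteq I$ or $S\setminus\{1\}$ is a \emph{proper} subset of some $F\in\mathcal{F}$. I would check in sequence that $\Delta$ is closed under taking subsets, satisfies the near-cone axiom (essentially by design), and has dimension exactly $d$ (attained by $\{1\}\cup I$; every other candidate facet --- some $F\in\mathcal{F}$ or $\{1\}\cup F'$ for $F'\subsetneq F\in\mathcal{F}$ --- has cardinality at most $d+1$ by the hypothesis on $\mathcal{F}$). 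The identity $B(\Delta)=\mathcal{F}$ is then the main verification: each $F\in\mathcal{F}$ lies in $\Delta$ and $F\cup\{1\}\notin\Delta$ because $F$ is not a proper subset of any element of $\mathcal{F}$ (Sperner property) and not contained in $I$ (hypothesis on $I$); conversely, every other $S\in\Delta$ with $1\notin S$ is a proper subset of some $F$ or a subset of $I$, and in either case $S\cup\{1\}\in\Delta$, so $S\notin B(\Delta)$.

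The main obstacle is pinpointing the correct definition of $\Delta$ in (ii)~$\Rightarrow$~(i) so that $B(\Delta)$ recovers exactly $\mathcal{F}$: the complex must be large enough to make each $F\in\mathcal{F}$ a face, yet small enough that adjoining $1$ to any such $F$ falls outside $\Delta$. This is precisely the point at which the two hypotheses on $I$ and $\mathcal{F}$ interact --- the Sperner property rules out proper inclusions among the $F$'s, while the condition that no subset of $I$ lies in $\mathcal{F}$ prevents any $F$ from being absorbed into the top facet $\{1\}\cup I$.
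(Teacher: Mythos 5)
Your proof is correct and takes essentially the same route as the paper: for (i)$\Rightarrow$(ii) the paper likewise takes a $d$-simplex of $\Delta$ containing $1$ (you merely make explicit, via one application of the near-cone axiom, why such a simplex exists), and for (ii)$\Rightarrow$(i) your case-by-case description of $\Delta$ coincides exactly with the paper's complex $\left[1*\left(E(\mathcal{F})\setminus\mathcal{F}\right)\right]\cup\mathcal{F}\cup\Delta^{\{1,i_1,\cdots,i_d\}}$, with the same verifications of the near-cone property, the dimension, and $B(\Delta)=\mathcal{F}$.
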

\begin{proof}
	(i)$\Rightarrow$(ii). First of all, since $\Delta$ is a $d$-dimensional near-cone, it has at least one $d$-simplex, say $\{1,i_1,\cdots,i_d\}$. Then by the definition of $B(\Delta)$ in~\eqref{Equ:B-Delta-def}, no subset of $\{i_1,\cdots,i_d\}$ belongs to $\FF=B(\Delta)$ since $\{1,i_1,\cdots,i_d\}$ is a face of $\Delta$. Moreover, the order of each member of $\mathcal{F}$ is no greater than $d+1$ since $\Delta$ is $d$-dimensional.
	\n
	(ii)$\Rightarrow$(i). From the Sperner family $\FF$, we define 
	\begin{equation} \label{Equ:Delta-F}
	  \Delta=\left[1*\left(E(\mathcal{F})\setminus\mathcal{F}\right)\right]\cup\mathcal{F}\cup\Delta^{\{1,i_1,\cdots,i_d\}},
	  \end{equation}
	where $E(\mathcal{F})$ is the simplicial complex generated by $\mathcal{F}$, that is, the minimal simplicial complex taking all the members of $\mathcal{F}$ as its maximal faces. 
	Then $\Delta$ is clearly a $d$-dimensional simplicial complex whose vertex set is a subset of $[m]$. Moreover, for any $ F\in\mathcal{F}$ and $j\in F$, we have $(F\setminus j)\cup 1\in 1*\left(E(\mathcal{F})\setminus\mathcal{F}\right)$,
	so $\Delta$ is a near-cone. Finally, it is obvious that $B(\Delta)=\mathcal{F}$.
\end{proof}

\begin{defi}
 Let $X$ be a finite set with order $|X|=n$. For a nonempty subset $Y$ of $X$, let $C(n,X,Y)$ denote the set of all subsets of $X$ that have nonempty intersection with $Y$. For any $i\geq 1$, let $C_i(n,X,Y)$ denote the collection of sets in $C(n,X,Y)$ of size $i$.
 \end{defi}

The following lemma combines some results from Lih~\cite{Lih80} and Griggs~\cite{Gri82}, where 
the $\lceil \cdot \rceil$ is the ceiling function.

\begin{lem}[see~\text{\cite[Theorem 2]{Lih80} and \cite[Theorem 7.1]{Gri82}}]\label{lem:sper}
 Let $X$ be a finite set of order $n$.
	The maximal possible cardinality of a Sperner family $\FF$ of $X$ over a subset $Y\subseteq X$ with $\vert Y\vert=k$ is $f(n, k)=\binom{n}{\lceil n / 2\rceil}-\binom{n-k}{\lceil n/2\rceil}$. 
	Moreover, $\vert\FF\vert=f(n,k)$ if and only if $\FF$ is one of the following cases:
	\begin{enumerate}
		\item[(a)] $C_{\left\lceil\frac{1}{2} n\right\rceil}(n,X,Y)$;
		\item[(b)] $C_{\frac{1}{2}(n-1)}(n,X,Y)$, for odd $n$ and  $k \geqslant \frac{1}{2}(n+3)$;
		\item[(c)] $C_{\frac{1}{2}(n+2)}(n,X,Y)$, for even $n$ and $k=1$.
	\end{enumerate}
	In particular, if $\vert\FF\vert=f(n,k)$, then every member in $\FF$ has the same order.
\end{lem}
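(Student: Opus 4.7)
The plan is to combine an elementary unimodality analysis of the level sizes of the restricted poset $C(n,X,Y)$ with a chain-decomposition argument in the Boolean lattice $2^X$, and then extract the equality cases by rigidity in the LYM inequality.

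First I would carry out the level-size analysis. Set $g(i) := |C_i(n,X,Y)| = \binom{n}{i}-\binom{n-k}{i}$. A direct comparison of consecutive values (using the recurrence $\binom{n}{i+1}-\binom{n}{i} = \binom{n}{i}\cdot\frac{n-2i-1}{i+1}$, and noting that the analogous recurrence for $\binom{n-k}{\cdot}$ grows more slowly because $n-k < n$) shows that $g$ is unimodal, with maximum value $f(n,k)$ attained at $i = \lceil n/2\rceil$. A short case analysis identifies when the maximum is attained at a second level: only when $n$ is odd and $k\geq (n+3)/2$, giving also $i=(n-1)/2$, and when $n$ is even and $k=1$, giving also $i=(n+2)/2$. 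These three possibilities match exactly the forms in cases (a), (b), (c) of the statement.

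Next I would prove the upper bound $|\FF|\leq f(n,k)$ via a symmetric chain decomposition $\mathcal{C}$ of the Boolean lattice $2^X$. Keep only those chains of $\mathcal{C}$ that contain at least one subset meeting $Y$. Since $g$ is unimodal and each chain of $\mathcal{C}$ passes through the middle level $\lceil n/2\rceil$, the number of kept chains equals $g(\lceil n/2\rceil)=f(n,k)$. The antichain $\FF$ sits inside the union of the kept chains and meets each in at most one element, so $|\FF|\leq f(n,k)$. Equivalently, one may apply the LYM inequality directly inside the subposet $C(n,X,Y)$, which yields the same bound and is a bit more symmetric to set up.

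For the equality cases, I would suppose $|\FF|=f(n,k)$ and argue that every kept chain meets $\FF$ in exactly one element. The rigidity part of the LYM inequality then forces $\FF$ to be concentrated on a single level $i$ at which $g$ attains its maximum. Combining with the classification from Step 1 immediately yields that $\FF$ must be one of the three families $C_i(n,X,Y)$ listed in (a), (b), (c), and the final sentence of the lemma follows automatically. The main obstacle is this rigidity step: the upper bound itself is a direct chain-counting consequence, but ruling out antichains that spread over two distinct levels requires a genuine rigidity input --- either the equality case of LYM (concentration on levels of maximum weight) or the normalized matching property together with a shifting/compression argument preserving the ``meets $Y$'' property. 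The parameter boundaries where $g$ fails to have a strict unique maximum correspond exactly to the subtle cases (b) and (c), and care is needed there to list all extremal families rather than just the ``default'' case (a).
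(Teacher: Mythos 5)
The paper gives no proof of this lemma: it is imported verbatim from Lih and Griggs, so there is nothing internal to compare your argument against. Judged on its own, your proposal follows the same general strategy as those sources (level-size analysis plus a chain-decomposition/LYM argument), and your Step 1 --- unimodality of $g(i)=\binom{n}{i}-\binom{n-k}{i}$ with peak at $\lceil n/2\rceil$ and ties exactly in the situations (b) and (c) --- is correct, though on the decreasing side of the peak the comparison of the two binomial recurrences needs an actual computation rather than the ``grows more slowly'' heuristic (both differences are negative there, so it is not a termwise domination).

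The genuine gap is in the upper bound. In a symmetric chain decomposition of $2^X$, a chain $A_\ell\subset\cdots\subset A_{n-\ell}$ contains a member of $C(n,X,Y)$ if and only if its \emph{top} element meets $Y$, and this can happen even when its middle element $A_{\lceil n/2\rceil}$ is disjoint from $Y$: for $n=4$, $Y=\{1\}$, a symmetric chain such as $\{2\}\subset\{2,4\}\subset\{1,2,4\}$ is ``kept'' by your rule although its level-$2$ element misses $Y$. Hence the number of kept chains is in general strictly larger than $g(\lceil n/2\rceil)=f(n,k)$, and your argument only yields a weaker bound. What is actually required is a decomposition of the subposet $C(n,X,Y)$ \emph{itself} into chains each passing through rank $\lceil n/2\rceil$; Lih constructs one by taking products of symmetric chain decompositions of $2^{Y}$ and $2^{X\setminus Y}$ and deleting the sets with empty $Y$-component. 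Your fallback of ``apply LYM directly inside the subposet'' hides the same difficulty, since LYM for a ranked poset is equivalent to the normalized matching property, which for $C(n,X,Y)$ is precisely the nontrivial input to be established. Finally, for the equality statement, even with a good chain decomposition in hand, ``each chain meets $\FF$ exactly once'' does not force $\FF$ to be a single level (different chains could be hit at different ranks); ruling out antichains spread over two ranks, especially in the borderline cases (b) and (c), is exactly the content of Griggs' Theorem 7.1, and you flag this step as the main obstacle without supplying it. As it stands, the proposal is a plausible outline of the published proofs rather than a proof.
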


 Now, we are ready to prove the following theorem which answers Question 2.
 
 \begin{thm}\label{Thm-Main-2}
The sets $\Sigma^{tb}(m,d)$ are classified as follows:
 \begin{itemize}
  \item[(i)] If $d \leqslant\left[\frac{m}{2}\right]-1$ or $d=m-1$, then $\Sigma^{tb}(m,d)=\Big\{\Delta_{(d)}^{[m]}\Big\}$;
  \item[(ii)] If $\left[\frac{m}{2}\right] \leqslant d \leqslant m-3 $, then $\Sigma^{tb}(m,d)=
  \Big\{\Delta_{\left(\left[\frac{m}{2}\right]-1\right)}^{[m]}\langle d\rangle\Big\}$;
  \item[(iii)] If $d=m-2$,
    \begin{itemize}
      \item when $m$ is odd, $\Sigma^{tb}(m,d)=\Big\{\Delta_{\left(\left[\frac{m}{2}\right]-1\right)}^{[m]}\langle d\rangle, \Delta_{\left(\left[\frac{m}{2}\right]\right)}^{[m]}\langle d\rangle\Big\}$;
      
 \item when $m$ is even, $\Sigma^{tb}(m,d)=\Big\{\Delta_{\left(\left[\frac{m}{2}\right]-1\right)}^{[m]}\langle d\rangle \Big\}$.
  \end{itemize}
  \end{itemize}
\end{thm}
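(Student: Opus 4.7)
The strategy is to push the problem through the shifting--near-cone machinery developed in this section and then solve the resulting extremal combinatorial problem with Lemma~\ref{lem:sper}. Let $K\in\Sigma^{tb}(m,d)$. By Theorem~\ref{Thm:Shifted-Complex} we may replace $K$ by its shifted complex $\Delta=\Delta(K,\mathbb F)$, which has the same $f$-vector and the same reduced Betti numbers, so it still lies in $\Sigma^{tb}(m,d)$. Being shifted, $\Delta$ is a near-cone, so by Lemma~\ref{Lem:B-Delta} we have $\widetilde{tb}(\Delta)=|B(\Delta)|$, and by Proposition~\ref{prop:equiv} the family $\mathcal F:=B(\Delta)$ is a Sperner family of $\{2,\dots,m\}$ satisfying (a) some $d$-subset $Y\subseteq\{2,\dots,m\}$ contains no member of $\mathcal F$, and (b) every member of $\mathcal F$ has size at most $d+1$. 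Hence maximising $\widetilde{tb}$ over $\Sigma(m,d)$ reduces to maximising $|\mathcal F|$ subject to (a) and (b).

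I would then split on the value of $d$. The case $d=m-1$ is trivial, since $\Sigma(m,m-1)=\{\Delta^{[m]}\}=\{\Delta_{(m-1)}^{[m]}\}$. For $d\leqslant\left[\frac{m}{2}\right]-1$, the bound $d+1\leqslant\left[\frac{m}{2}\right]$ makes (b) binding, and a standard LYM argument on $\{2,\dots,m\}$ (a set of size $m-1$) yields $|\mathcal F|\leqslant\binom{m-1}{d+1}$, with equality only when $\mathcal F$ consists of all $(d+1)$-subsets of $\{2,\dots,m\}$. For $d\geqslant\left[\frac{m}{2}\right]$, constraint (b) is redundant, while (a) is exactly the condition that $\mathcal F$ be a Sperner family of $\{2,\dots,m\}$ over the complementary $(m-1-d)$-set $Z=\{2,\dots,m\}\setminus Y$. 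Lemma~\ref{lem:sper} applied with $n=m-1$ and $k=m-1-d$ therefore gives $|\mathcal F|\leqslant\binom{m-1}{\left[\frac{m}{2}\right]}-\binom{d}{\left[\frac{m}{2}\right]}$, together with a classification of the extremal families into its three cases (a), (b), (c). Case (b) requires $d\leqslant\left[\frac{m}{2}\right]-2$, which is incompatible with $d\geqslant\left[\frac{m}{2}\right]$, and case (c) requires $m$ odd and $d=m-2$. So in part (ii) only case (a) survives, whereas in part (iii) case (a) always survives and case (c) additionally contributes when $m$ is odd.

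For each surviving extremal $\mathcal F$ I would then reconstruct the corresponding extremal near-cone via the explicit formula $\Delta=[1*(E(\mathcal F)\setminus\mathcal F)]\cup\mathcal F\cup\Delta^{\{1\}\cup Y}$ from the proof of Proposition~\ref{prop:equiv}. A short but careful case analysis, organised by the size of subsets of $[m]$ and by whether they contain the apex $1$, then shows that the small-$d$ extremum reproduces $\Delta_{(d)}^{[m]}$, Lih--Griggs case (a) reproduces $\Delta_{\left(\left[\frac{m}{2}\right]-1\right)}^{[m]}\langle d\rangle$, and Lih--Griggs case (c) reproduces $\Delta_{\left(\left[\frac{m}{2}\right]\right)}^{[m]}\langle d\rangle$. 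Finally, an arbitrary $K\in\Sigma^{tb}(m,d)$ shares its $f$-vector with one of these explicit complexes (because shifting preserves the $f$-vector), and that $f$-vector rigidifies $K$ up to isomorphism: in part (i), $f_d=\binom{m}{d+1}$ forces every $(d+1)$-subset of $[m]$ to be a face, so $K=\Delta_{(d)}^{[m]}$; in parts (ii) and (iii), the equalities $f_i=\binom{m}{i+1}$ for $i$ below the relevant threshold force $K$ to contain the full lower skeleton, while $f_d=1$ together with $f_i=\binom{d+1}{i+1}$ for the intermediate $i$ forces the remaining simplices to be precisely the faces of a single $d$-simplex. The main technical obstacle is the reconstruction step matching each extremal Sperner family to its near-cone; the one delicate point is recognising that Lih--Griggs case (c) produces exactly the second complex listed in part (iii) of the statement.
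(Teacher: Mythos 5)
Your overall strategy coincides with the paper's for parts (ii) and (iii): pass to the algebraic shift $\Delta(K,\mathbb F)$, use Lemma~\ref{Lem:B-Delta} and Proposition~\ref{prop:equiv} to turn the problem into maximising a Sperner family of $\{2,\dots,m\}$ over an $(m-1-d)$-set, and then invoke Lemma~\ref{lem:sper}, discarding its case (b) and keeping case (c) only for $m$ odd, $d=m-2$. Where you genuinely diverge is part (i): the paper does \emph{not} use the Sperner machinery there, but instead runs an induction on $d$ via the permutation/Mayer--Vietoris symmetrisation of Lemma~\ref{lem:mv} (a minimal element of $\Sigma^{tb}(m,d)$ is shown to be invariant under all permutations of $[m]$, hence a skeleton), falling back on Sperner's theorem only in the borderline even case $d=\frac{m}{2}-1$. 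Your LYM argument with the size constraint $|F|\leqslant d+1$ is a clean substitute, with one caveat: when $m$ is even and $d+1=\frac{m}{2}$, LYM equality also permits the full level of $\left(\frac{m}{2}-1\right)$-subsets, and you must use condition (a) of Proposition~\ref{prop:equiv} (or the dimension of $\Delta$) to exclude it. A side benefit of the paper's route for (i) is that the permutation argument acts on $K$ itself, so no ``return from the shift'' is needed there.

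The one genuine soft spot is your final uniqueness step. You assert that $K$ ``shares its $f$-vector with one of these explicit complexes (because shifting preserves the $f$-vector).'' Shifting gives $f(K)=f(\Delta(K))$, but what you actually know about $\Delta(K)$ is only that $B(\Delta(K))$ is one of the extremal Sperner families; the set $B(\Delta)$ does not in general determine a near-cone $\Delta$, let alone its $f$-vector (two near-cones with different links of the apex can have the same $B$), so identifying $f(\Delta(K))$ with the $f$-vector of the reconstruction~\eqref{Equ:Delta-F} is circular as stated. The gap is fillable: writing $\Delta=(1*\mathrm{Link}_\Delta 1)\cup B(\Delta)$ and using that members of $B(\Delta)$ are maximal, that $\Delta$ is shifted of dimension $d$ (so $\{1,\dots,d+1\}\in\Delta$), and that no face of $\mathrm{Link}_\Delta 1$ may contain a member of $B(\Delta)$, one checks that for each extremal family the link of the apex --- and hence $\Delta(K)$ --- is forced, after which your $f$-vector rigidity argument ($f_i=\binom{m}{i+1}$ below the threshold forces the full skeleton, $f_d=1$ pins down the extra simplex) does recover $K$ itself. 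You should supply that intermediate step explicitly; with it, the proposal is correct.
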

 
 \begin{proof}
 By the algebraic shifting construction in \cite[Theorem 3.1]{BjrnKal88}, there is a unique shifted complex $\Delta$ associated to $K\in\Sigma(m,d)$ where $\Delta$ has the same $f$-vector and $\beta$-vector as $K$. Moreover, $\Delta$ is a near-cone. So by Lemma~\ref{Lem:B-Delta},
  $$\widetilde{tb}(K) = \widetilde{tb}(\Delta) =
    |B(\Delta)|,$$
    where $B(\Delta)$ is a Sperner family of $\{2,\cdots,m\}$. Moreover, since $\Delta$ is shifted and
    $\dim(\Delta)=\dim(K)=d$, we can assume that $\{1,\cdots,d+1\}$ is a $d$-simplex of $\Delta$.	\n  
	
 (i)\, The case $d=m-1$ is trivial. We do induction
 on $d\leq\left[\frac{m}{2}\right]-1$. The case $d=0$
 is clearly true.  Let $K$ be an arbitrary element of $\Sigma^{tb}(m,d)$. For any permutation $\phi$ of $[m]$, since
 $K\cup \phi K \in \Sigma(m,d)$, we have
	$$\widetilde{tb}(K)=\widetilde{tb}(\phi K) \geq
	\widetilde{tb}(K\cup \phi K).$$
	So we can deduce from~\eqref{Equ:tb-K} that
	$\widetilde{tb}(K)\leq \widetilde{tb}(K\cap\phi K)$.
	\nn
	
	\textbf{Claim:} $\dim(K\cap\phi K)=d$, hence $K\cap\phi K\in \Sigma^{tb}(m,d)$.\nn
	
 Assume that $s=\dim(K\cap \phi K)<d$.  Then since $K\in\Sigma^{tb}(m,d)$, we have $\widetilde{tb}(K) \geq \widetilde{tb}(\Delta_{(d)}^{[m]})=\binom{m-1}{d+1}$, and hence
	$ \widetilde{tb}(K\cap\phi K) \geq \binom{m-1}{d+1}$.
 On the other hand, by our induction hypothesis 
	$\widetilde{tb}(K\cap\phi K) \leq \widetilde{tb}(\Delta_{(s)}^{[m]}) = \binom{m-1}{s+1}
	$. 
	\begin{itemize}
	\item When $m$ is odd, since $s< d\leq \left[\frac{m}{2}\right]-1$,	$\binom{m-1}{s+1} < \binom{m-1}{d+1}$, a contradiction. So we must have $s=d$. \n
 
  \item When $m$ is even and $d \leq \left[\frac{m}{2}\right]-2$, the same argument as the previous case applies.
 So the only remaining case is $d=\left[\frac{m}{2}\right]-1=\frac{m}{2}-1$. Since $B(\Delta)$ is a Sperner family of $\{2,\cdots,m\}$, by Sperner's theorem we have
   $$ \widetilde{tb}(K) = \widetilde{tb}(\Delta) =
    |B(\Delta)| \leq \binom{m-1}{\left[\frac{m-1}{2}\right]}
    = \binom{m-1}{\frac{m}{2}-1} $$ 
 where the equality holds if and only if $B(\Delta)$
 consists of either all the $(\frac{m}{2}-1)$-subsets or all the $\frac{m}{2}$-subsets of $\{ 2,\cdots, m \}$.
 But since the dimension $\dim(\Delta)=\dim(K)=\frac{m}{2}-1$,
 $B(\Delta)$ must be the later case. This implies that $K = \Delta_{(\frac{m}{2}-1)}^{[m]}$ and so $K\cap \phi K = K$.
 The claim is proved.
  \end{itemize}

		 By the above claim, if $K$ is a minimal element of
		 $\Sigma^{tb}(m,d)$, then $K\cap\phi K=K$ for every permutation $\phi$ of $[m]$, which implies that $K=\Delta_{(d)}^{[m]}$. But $\Delta_{(d)}^{[m]}$ is maximal in $\Sigma(m, d)$, so we can assert that $\Sigma^{tb}(m,d)=\Big\{\Delta_{(d)}^{[m]}\Big\}$.\n
		 
		 (ii) and (iii)\, When $ \left[\frac{m}{2}\right] \leq d \leq m-2$, let 
		  $$X=\{2,\cdots, m\}, \ \ Y=[m]\setminus\{1,\cdots d+1\} =\{d+2,\cdots,m\}.$$
		 	 
	Observe that $B(\Delta)$ is a Sperner family of $X$ that satisfies the condition (2) in Proposition~\ref{prop:equiv} with $\{i_1,\cdots, i_d\}=
		  \{2,\cdots,d+1\}$. Therefore, no subset of $\{2,\cdots,d+1\}$ belongs to $B(\Delta)$, which implies that
		 every member of $B(\Delta)$ has nonempty intersection 
		 with $Y$. In other words, $B(\Delta)$ is a Sperner family of $X$ over $Y$. So by Lemma~\ref{lem:sper},
		 the cardinality of $B(\Delta)$ satisfies:
		 $$\vert B(\Delta) \vert \leq \binom{m-1}{\lceil \frac{m-1}{2}\rceil}-\binom{d}{\lceil \frac{m-1}{2}\rceil}=\binom{m-1}{\left[\frac{m}{2}\right]}-\binom{d}{\left[ \frac{m}{2}\right]} $$
 where the equality holds if and only if $B(\Delta)$ is one of the three types of Sperner families listed in Lemma~\ref{lem:sper} with $n=|X|=m-1$ and $k=|Y|=m-d-1$.
 Then since $|B(\Delta)|$ computes $\widetilde{tb}(K)$, the
  Sperner families described in Lemma~\ref{lem:sper} will give us all possible members of
   $\Sigma^{tb}(m,d)$.
 \begin{itemize}
 \item In the case (a) of Lemma~\ref{lem:sper}, the near-cone constructed from the Sperner family $C_{\left\lceil\frac{1}{2} n\right\rceil}(n,X,Y)$ in~\eqref{Equ:Delta-F} in Proposition~\ref{prop:equiv} is exactly $\Delta^{[m]}_{\left(\left[\frac{m}{2}\right]-1\right)}\langle d\rangle$ for every $\left[\frac{m}{2}\right]\leq d \leq m-2$. \n
 
 \item In the case (b) of Lemma~\ref{lem:sper}, the near-cone constructed from the Sperner family $C_{\frac{1}{2}(n-1)}(n,X,Y)$ in~\eqref{Equ:Delta-F} has dimension $n-k\leq n-\frac{1}{2}(n+3)=\frac{m}{2}-2$ which contradicts our assumption $d\geq \left[\frac{m}{2}\right]$, hence is invalid. \n
 
 \item In the case (c) of Lemma~\ref{lem:sper}, $n$ is even and then $m$ is odd. 
 The near-cone constructed from the Sperner family $C_{\frac{1}{2}(n+2)}(n,X,Y)$ in~\eqref{Equ:Delta-F} has dimension $d=n-k=m-2$, which gives rise to $\Delta^{[m]}_{\left(\left[\frac{m}{2}\right]\right)}\langle m-2\rangle$. 
 \end{itemize}
 
 From the above discussion, we obtain the desired 
 statements in (ii) and (iii). 
\end{proof}

The following is an immediate corollary of the proof of Theorem~\ref{Thm-Main-2}.

\begin{cor}\label{cor:num}
	For any $d$-dimensional simplicial complex $K$ with  
	$m$ vertices, the upper bound of $\widetilde{tb}(K)$ is given by:
	\begin{enumerate}
		\item[(i)] $\widetilde{tb}(K)\leq \binom{m-1}{d+1},$ if $d\leq\left[\frac{m}{2}\right]-1$;
		\item[(ii)] $\widetilde{tb}(K)\leq \binom{m-1}{[m/2]}-\binom{d}{[m/2]},$ if $\left[\frac{m}{2}\right] \leq d \leq m-2$;
		\item[(iii)] $\widetilde{tb}(K)=0$, if $d=m-1$.
	\end{enumerate}
	Moreover, the equalities in $\mathrm{(i)}$ and $\mathrm{(ii)}$ hold if and only if $K$ belongs to $\Sigma^{tb}(m,d)$ as described in Theorem~\ref{Thm-Main-2}. 
\end{cor}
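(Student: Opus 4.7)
The plan is to read off this corollary almost directly from Theorem~\ref{Thm-Main-2} together with the Sperner-family bound that appears in its proof. Theorem~\ref{Thm-Main-2} already identifies the maximizers in $\Sigma(m,d)$, so the only remaining task is to compute the value of $\widetilde{tb}$ at those maximizers and to observe that case (iii) is trivial. Concretely, I would organize the proof as follows.

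Case (iii) is immediate: a simplicial complex $K\in\Sigma(m,m-1)$ must contain the unique top simplex on all $m$ vertices, hence $K=\Delta^{[m]}$, which is contractible, giving $\widetilde{tb}(K)=0$. For case (i), Theorem~\ref{Thm-Main-2}(i) gives $\Sigma^{tb}(m,d)=\{\Delta^{[m]}_{(d)}\}$, so the inequality reduces to the well-known identity $\widetilde{tb}\bigl(\Delta^{[m]}_{(d)}\bigr)=\binom{m-1}{d+1}$. I would justify this in one line: the $d$-skeleton of an $(m-1)$-simplex is $(d-1)$-connected with reduced homology concentrated in degree $d$, and the rank of $\widetilde{H}_d$ is then forced by the reduced Euler characteristic computation $\widetilde{\chi}\bigl(\Delta^{[m]}_{(d)}\bigr)=\sum_{i=0}^{d}(-1)^{i}\binom{m}{i+1}=(-1)^d\binom{m-1}{d+1}$.

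For case (ii), the upper bound is exactly the Sperner-family estimate already derived inside the proof of Theorem~\ref{Thm-Main-2}: passing to the algebraically shifted near-cone $\Delta$ associated to $K$ preserves $\widetilde{tb}$ (Theorem~\ref{Thm:Shifted-Complex} and Lemma~\ref{Lem:B-Delta}(iii)), and $B(\Delta)$ is a Sperner family of $\{2,\dots,m\}$ over $Y=\{d+2,\dots,m\}$ by Proposition~\ref{prop:equiv}, so Lemma~\ref{lem:sper} with $n=m-1$, $k=m-d-1$ yields
\[
\widetilde{tb}(K)=|B(\Delta)|\leq \binom{m-1}{\lceil(m-1)/2\rceil}-\binom{d}{\lceil(m-1)/2\rceil}=\binom{m-1}{[m/2]}-\binom{d}{[m/2]}.
\]
The equality statement in cases (i) and (ii) then follows because Theorem~\ref{Thm-Main-2} already pinned down $\Sigma^{tb}(m,d)$ as the set of simplicial complexes attaining the maximum of $\widetilde{tb}$ on $\Sigma(m,d)$.

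There is essentially no main obstacle here; the only mildly nontrivial step is supplying the closed-form computation of $\widetilde{tb}\bigl(\Delta^{[m]}_{(d)}\bigr)$, which I would handle by the Euler-characteristic argument above (or equivalently by induction on $d$ via the cofiber sequence $\Delta^{[m]}_{(d-1)}\hookrightarrow \Delta^{[m]}_{(d)}\to \bigvee S^{d}$). Everything else is simply recording values the preceding theorem already implicitly establishes.
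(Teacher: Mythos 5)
Your proposal is correct and takes essentially the same route as the paper, which likewise treats this as an immediate by-product of (the proof of) Theorem~\ref{Thm-Main-2}: case (i) reduces to $\widetilde{tb}\bigl(\Delta^{[m]}_{(d)}\bigr)=\binom{m-1}{d+1}$, case (ii) is exactly the Sperner bound of Lemma~\ref{lem:sper} applied to $B(\Delta)$ with $n=m-1$, $k=m-d-1$, and the equality statement is the definition of $\Sigma^{tb}(m,d)$. One small slip worth fixing: your displayed identity computes the \emph{unreduced} Euler characteristic, since $\sum_{i=0}^{d}(-1)^{i}\binom{m}{i+1}=(-1)^{d}\binom{m-1}{d+1}+1$; including the empty-face term $(-1)^{-1}f_{-1}=-1$ gives $\widetilde{\chi}\bigl(\Delta^{[m]}_{(d)}\bigr)=(-1)^{d}\binom{m-1}{d+1}$ as claimed, so the conclusion is unaffected.
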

\vskip .4cm

  \section{Simplicial complexes with the minimum total bigraded Betti number} \label{Sec-Minimum-D}
  
  For any $d$-dimensional simplicial complex $K$ with $m$ vertices, there is a universal lower bound of $\widetilde{D}(K)$ that depends only on $m$ and $d$.
  This lower bound was discovered through an interesting relation between $\widetilde{D}(K)$ and some canonical CW-complex associated to $K$ called \emph{real moment-angle complex} of $K$ (see Davis and Januszkiewicz~\cite[p.\,428--429]{DaJan91} or Buchstaber and Panov~\cite[Section 4.1]{BP15}). One way to write $\R\mathcal{Z}_K$ is
  \begin{equation} \label{Equ:RZK} 
  \R\mathcal{Z}_K =  \bigcup_{\sigma\in K} \Big( \prod_{i\in\sigma} D^1_{(i)} \times \prod_{i\notin \sigma} S^0_{(i)} \Big) \subseteq \prod_{i\in [m]} D^1_{(i)} =[-1,1]^m,
  \end{equation}
  where $D^1_{(i)}$ and $S^0_{(i)}$ is a copy of $D^1=[-1,1]$ and $S^0=\partial D^1 = \{-1,1\}$ indexed by $i\in [m]$, and $\prod$ denotes Cartesian product of spaces.\n

 It is shown in~\cite[Section 4]{BP15} (also see~\cite[Theorem 4.2]{CaoLu12}) that
 $\mathrm{Tor}_{\mathbb{F}[v_1,\cdots, v_m]}(\mathbb{F}[K],\mathbb{F})$ computes the cohomology groups of $\R\mathcal{Z}_K$, which implies 
  \begin{equation} \label{Equ:DK-ZK}
    \widetilde{D}(K;\mathbb{F}) =\dim_{\mathbb{F}} \mathrm{Tor}_{\mathbb{F}[v_1,\cdots, v_m]}(\mathbb{F}[K],\mathbb{F}) =tb(\R\mathcal{Z}_K;\mathbb{F}). 
   \end{equation}

 Moreover, by~\cite[Theorem 1.4]{CaoLu12} or~\cite[Theorem 3.2]{Uto11},
 there is a universal lower bound of $ tb(\R\mathcal{Z}_K;\mathbb{F})$ for any simplicial complex $K$:
  \begin{equation} \label{Equ:Lower-Bound}
   tb(\R\mathcal{Z}_K;\mathbb{F}) \geqslant 2^{m-\dim(K)-1}.
   \end{equation}  
  So for any $d$-dimensional simplicial complex $K$ with
  $m$ vertices, we always have
  \begin{equation} \label{Equ:Uni-Lower-bound}
  \widetilde{D}(K;\mathbb{F}) \geqslant 2^{m-d-1}.
  \end{equation}
  
 In the following, we study those simplicial complexes
  that make the equality in~\eqref{Equ:Lower-Bound} hold, i.e. tight simplicial complexes (see Definition~\ref{Def:Tight-Complex}). The coefficient $\mathbb{F}$ will be omitted in the rest of
   this section.\nn
   
    The following are some easy lemmas on the properties of $\widetilde{D}(K)$.
    
 \begin{lem} \label{Lem-retr}
	If $K'$ is a full subcomplex of $K$, then
	$\widetilde{D}(K')\leq \widetilde{D}(K)$.
	\end{lem}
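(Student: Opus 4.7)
The plan is to prove the lemma directly from the Hochster-type expansion of $\widetilde{D}$ given in equation~\eqref{Equ:D-tilde-K-J}, namely $\widetilde{D}(K) = \sum_{J\subseteq [m]} \widetilde{tb}(K|_J)$. All the heavy lifting has already been done by this formula, so the proof is essentially a term-by-term comparison of two finite sums of non-negative integers.

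Concretely, let $K$ have vertex set $[m]$ and let $V'\subseteq [m]$ be the vertex set of the full subcomplex $K'$, so that $K' = K|_{V'}$. First I would observe that full-subcomplex restriction is transitive: for every $J\subseteq V'$ one has $K'|_J = (K|_{V'})|_J = K|_J$, because ``full subcomplex'' means that a simplex of $K$ lies in $K|_{V'}$ iff all its vertices lie in $V'$, and then it lies in $K|_J$ iff all its vertices lie in $J$. Applying~\eqref{Equ:D-tilde-K-J} to $K'$ (whose ambient vertex set is $V'$) therefore gives
\begin{equation*}
\widetilde{D}(K') \;=\; \sum_{J\subseteq V'} \widetilde{tb}(K'|_J) \;=\; \sum_{J\subseteq V'} \widetilde{tb}(K|_J).
\end{equation*}

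Next I would note that every term $\widetilde{tb}(K|_J)$ is non-negative: for $J\neq \varnothing$ the reduced Betti numbers $\widetilde{\beta}_i(K|_J)$ are non-negative integers for all $i\geq 0$, and for $J=\varnothing$ the convention $\widetilde{\beta}_{-1}(\varnothing)=1$ gives $\widetilde{tb}(\varnothing)=1\geq 0$. Since the indexing set $\{J : J\subseteq V'\}$ is contained in $\{J : J\subseteq [m]\}$, applying~\eqref{Equ:D-tilde-K-J} to $K$ yields
\begin{equation*}
\widetilde{D}(K) \;=\; \sum_{J\subseteq [m]} \widetilde{tb}(K|_J) \;\geq\; \sum_{J\subseteq V'} \widetilde{tb}(K|_J) \;=\; \widetilde{D}(K'),
\end{equation*}
which is exactly the claim.

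There is essentially no obstacle here; the only point requiring care is the identity $K'|_J = K|_J$ for $J\subseteq V'$, which genuinely uses the hypothesis that $K'$ is a \emph{full} subcomplex rather than an arbitrary subcomplex. Without fullness one would only have $K'|_J \subseteq K|_J$, and then the termwise inequality $\widetilde{tb}(K'|_J)\leq \widetilde{tb}(K|_J)$ need not hold, so the lemma can fail; it is worth flagging this dependence on fullness explicitly in the write-up.
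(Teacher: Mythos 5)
Your proof is correct and is exactly the paper's argument: the authors likewise deduce the inequality from the expansion $\widetilde{D}(K)=\sum_{J\subseteq[m]}\widetilde{tb}(K|_J)$ in~\eqref{Equ:D-tilde-K-J} together with the observation that a full subcomplex of $K'$ is a full subcomplex of $K$, so the sum for $K'$ is a sub-sum of non-negative terms of the sum for $K$. Your write-up just spells out the details (the identity $K'|_J=K|_J$ for $J\subseteq V'$ and the non-negativity of each term) that the paper leaves implicit.
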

\begin{proof}
 This follows from the formula~\eqref{Equ:D-tilde-K-J} of $\widetilde{D}(K)$ and the simple fact that a full subcomplex of $K'$ is also a full subcomplex of $K$. 
 \end{proof}

\begin{lem} \label{Lem:Simplex}
  Let $K$ be a simplicial complex with vertex set $[m]=\{1,2,\cdots, m\}$. Then $\widetilde{D}(K)\geqslant 1$. Moreover,
  $\widetilde{D}(K)=1$ if and only if $K$ is the simplex $\Delta^{[m]}$. 
 \end{lem}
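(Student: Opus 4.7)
The plan is to use the expansion~\eqref{Equ:D-tilde-K-J}, namely $\widetilde{D}(K) = \sum_{J \subseteq [m]} \widetilde{tb}(K|_J)$, and to separate the contribution of $J = \varnothing$ from the contributions of nonempty $J$.

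First, by the convention $\widetilde{\beta}_{-1}(\varnothing) = 1$ and $\widetilde{\beta}_i(\varnothing) = 0$ for $i \geq 0$, we have $\widetilde{tb}(\varnothing) = 1$. For any nonempty $J \subseteq [m]$, the full subcomplex $K|_J$ is nonempty (since every element of $[m]$ is a vertex of $K$), so $\widetilde{\beta}_{-1}(K|_J) = 0$ and $\widetilde{\beta}_i(K|_J) \geq 0$ for $i \geq 0$. Therefore
\[
\widetilde{D}(K) \;=\; 1 + \sum_{\varnothing \neq J \subseteq [m]} \widetilde{tb}(K|_J) \;\geq\; 1,
\]
with equality if and only if $\widetilde{tb}(K|_J) = 0$ for every nonempty $J \subseteq [m]$.

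The forward implication of the equality case is immediate: when $K = \Delta^{[m]}$, every full subcomplex $K|_J$ with $J \neq \varnothing$ is the simplex $\Delta^{J}$, which is contractible, so $\widetilde{tb}(K|_J) = 0$. For the converse, I would argue by induction on $s = |J|$ that under the assumption $\widetilde{tb}(K|_J) = 0$ for all nonempty $J$, every subset $J \subseteq [m]$ is a simplex of $K$. The case $s = 1$ is trivial. For the inductive step with $s \geq 2$, the inductive hypothesis says that every proper nonempty subset of $J$ belongs to $K$, so $K|_J$ contains the entire boundary $\partial \Delta^{J}$ of the $(s-1)$-simplex on $J$. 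If $J$ itself were not in $K$, then $K|_J$ would equal $\partial \Delta^{J}$, which is homeomorphic to $S^{s-2}$ and satisfies $\widetilde{\beta}_{s-2}(\partial \Delta^{J}) = 1$, contradicting $\widetilde{tb}(K|_J) = 0$. Hence $J \in K$, and taking $J = [m]$ yields $K = \Delta^{[m]}$.

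There is no serious obstacle here; the content lies entirely in the bookkeeping between the $J = \varnothing$ convention and the nonnegativity of reduced Betti numbers for nonempty complexes, combined with the standard observation that $\partial \Delta^{J}$ is a sphere. The induction is what lets us reconstruct the whole simplex from the vanishing of reduced homology on every full subcomplex.
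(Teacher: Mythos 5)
Your proof is correct and follows essentially the same route as the paper: both expand $\widetilde{D}(K)$ via~\eqref{Equ:D-tilde-K-J}, take the $J=\varnothing$ term as the baseline $1$, and detect any failure of $K$ to be the full simplex through a minimal non-face $J$, for which $K|_J=\partial\Delta^J$ has nonzero reduced homology. Your induction on $|J|$ is just a slightly more verbose way of producing that minimal non-face, so the two arguments are the same in substance.
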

 \begin{proof}
 We use the formula of $\widetilde{D}(K)$ in ~\eqref{Equ:D-tilde-K-J}.
 Consider a minimal subset $J\subseteq [m]$ that does not span a simplex in $K$ (called a minimal non-face). If $J$ is not the empty set, then $K|_J=\partial\Delta^J$ and
  $\widetilde{tb}(K|_J) = 1$. This implies 
  $$\widetilde{D}(K)\geq  \widetilde{tb}(K|_{\varnothing}) + \widetilde{tb}(K|_J) =  2.$$
  So $\widetilde{D}(K)=1$ if and only if all the minimal non-faces of $K$ are empty, i.e. $K$ is the simplex
  $\Delta^{[m]}$.
 \end{proof}

     \begin{lem} \label{Lem:Join-X}
  For any finite CW-complexes $X$ and $Y$,  
    $$  tb(X\times Y) = tb(X)tb(Y), \ \ \ \widetilde{tb}(X*Y) = \widetilde{tb}(X) \widetilde{tb}(Y)$$
    where $X*Y$ is the join of $X$ and $Y$.
  \end{lem}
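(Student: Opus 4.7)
The plan is to derive both identities from the Künneth formula over the field $\mathbb{F}$, where the Tor terms vanish. For the product, the standard Künneth theorem gives
\begin{equation*}
H_n(X\times Y;\mathbb{F}) \cong \bigoplus_{i+j=n} H_i(X;\mathbb{F})\otimes_{\mathbb{F}} H_j(Y;\mathbb{F})
\end{equation*}
since $\mathbb{F}$ is a field. Taking $\mathbb{F}$-dimensions of both sides and summing over $n\geq 0$ immediately yields $tb(X\times Y) = tb(X)\,tb(Y)$.

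For the join, the key input is the classical homotopy equivalence $X * Y \simeq \Sigma(X \wedge Y)$ for (well-pointed) CW-complexes $X,Y$, where $\wedge$ denotes the reduced smash product and $\Sigma$ the reduced suspension. Combining the suspension isomorphism $\widetilde{H}_n(\Sigma Z;\mathbb{F})\cong \widetilde{H}_{n-1}(Z;\mathbb{F})$ with the reduced Künneth isomorphism for the smash product,
\begin{equation*}
\widetilde{H}_{n-1}(X\wedge Y;\mathbb{F}) \cong \bigoplus_{i+j=n-1} \widetilde{H}_i(X;\mathbb{F})\otimes_{\mathbb{F}} \widetilde{H}_j(Y;\mathbb{F}),
\end{equation*}
which again involves no Tor since $\mathbb{F}$ is a field, I would obtain
\begin{equation*}
\widetilde{\beta}_n(X*Y) \;=\; \sum_{i+j=n-1} \widetilde{\beta}_i(X)\,\widetilde{\beta}_j(Y).
\end{equation*}
Summing over $n\geq 0$ then rearranges as $\widetilde{tb}(X*Y) = \widetilde{tb}(X)\,\widetilde{tb}(Y)$.

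To respect the paper's conventions $K*\varnothing=K$ together with $\widetilde{\beta}_{-1}(\varnothing)=1$, one checks that $\widetilde{tb}(X*\varnothing)=\widetilde{tb}(X) = \widetilde{tb}(X)\cdot 1 = \widetilde{tb}(X)\,\widetilde{tb}(\varnothing)$, so the product formula extends consistently to this boundary case. I do not anticipate a real obstacle here: the only care needed is to track the suspension-induced index shift and to exploit that working over a field eliminates Tor terms in both Künneth isomorphisms. As an alternative to the $X*Y\simeq \Sigma(X\wedge Y)$ route, one could instead use the Mayer--Vietoris sequence for the decomposition $X*Y = (CX\times Y)\cup(X\times CY)$ along $X\times Y$, noting that $CX\times Y\simeq Y$ and $X\times CY\simeq X$, and that the map $\widetilde{H}_*(X\times Y)\to \widetilde{H}_*(X)\oplus\widetilde{H}_*(Y)$ induced by the projections is split surjective; one then applies Künneth to $X\times Y$ to reach the same conclusion.
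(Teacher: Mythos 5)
Your proposal is correct and follows essentially the same route as the paper: K\"unneth for the product, and the homotopy equivalence $X*Y\simeq \Sigma(X\wedge Y)$ combined with the (reduced/relative) K\"unneth isomorphism over the field $\mathbb{F}$ for the join. The extra check of the boundary case $X*\varnothing$ and the alternative Mayer--Vietoris argument are fine but not needed.
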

  \begin{proof}
   The equality $tb(X\times Y) = tb(X)tb(Y)$ follows from the K\"unneth formula of homology groups.
    In addition, by the homotopy equivalence 
    $$X*Y\simeq \Sigma(X\wedge Y)$$
     where ``$\wedge$'' is the smash product and ``$\Sigma$'' is the (reduced) suspension, we obtain (with a field coefficient) that
    \begin{equation} \label{Equ:Hom-Iso-join}
      \widetilde{H}_n(X*Y) \cong H_{n-1}(X\wedge Y) \cong
    \bigoplus_{i} (\widetilde{H}_i(X) \otimes \widetilde{H}_{n-1-i}(Y) ). 
    \end{equation}
    The second isomorphism in~\eqref{Equ:Hom-Iso-join}
    follows from the relative version of the K\"unneth formula (see~\cite[Corollary 3.B.7]{Hat02}).
   Notice that $X*Y$ is always path-connected and hence $\widetilde{H}_0(X*Y)=0$. Then it follows that $\widetilde{tb}(X*Y) = \widetilde{tb}(X) \widetilde{tb}(Y)$.
  \end{proof}

 \begin{lem} \label{Lem:Join-K}
  For any finite nonempty simplicial complexes $K$ and $L$,  
  $$ \widetilde{D}(K*L) = \widetilde{D}(K) \widetilde{D}(L) .  $$
  \end{lem}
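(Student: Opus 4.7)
The plan is to reduce the identity to the decomposition of $\widetilde{D}$ as a sum of reduced total Betti numbers of full subcomplexes given in~\eqref{Equ:D-tilde-K-J}, and then to apply the multiplicativity of $\widetilde{tb}$ under the join operation proved in Lemma~\ref{Lem:Join-X}.

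Suppose $K$ has vertex set $[m]$ and $L$ has vertex set $V$ (assumed disjoint from $[m]$) with $|V|=n$, so that $K*L$ has vertex set $[m]\sqcup V$. The first step is the combinatorial observation that for any subset $J\subseteq [m]\sqcup V$, writing $J=J_1\sqcup J_2$ with $J_1=J\cap[m]$ and $J_2=J\cap V$, the full subcomplex of $K*L$ on $J$ decomposes as
$$(K*L)|_J \,=\, K|_{J_1}*L|_{J_2},$$
directly from the definitions of the join and of a full subcomplex.

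With this identification, expanding $\widetilde{D}(K*L)$ via~\eqref{Equ:D-tilde-K-J} and applying Lemma~\ref{Lem:Join-X} termwise gives
$$\widetilde{D}(K*L)=\sum_{J_1\subseteq[m]}\sum_{J_2\subseteq V}\widetilde{tb}\bigl(K|_{J_1}*L|_{J_2}\bigr)=\sum_{J_1,J_2}\widetilde{tb}(K|_{J_1})\,\widetilde{tb}(L|_{J_2}),$$
which then factors as $\widetilde{D}(K)\,\widetilde{D}(L)$, exactly as claimed.

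The only delicate point, and the main (mild) obstacle, is that Lemma~\ref{Lem:Join-X} is proved via the homotopy equivalence $X*Y\simeq\Sigma(X\wedge Y)$, which implicitly requires both factors to be nonempty; but the double sum above contains summands where $J_1=\varnothing$ or $J_2=\varnothing$, in which case one of the factors is the empty complex. I would dispose of these boundary cases separately using the stated conventions $K*\varnothing=K$ and $\widetilde{tb}(\varnothing)=1$ (which comes from $\widetilde{\beta}_{-1}(\varnothing)=1$): when $J_1=\varnothing$, one has $K|_{J_1}*L|_{J_2}=L|_{J_2}$ and $\widetilde{tb}(K|_{J_1})\,\widetilde{tb}(L|_{J_2})=1\cdot\widetilde{tb}(L|_{J_2})$, so the join identity still holds; the symmetric case $J_2=\varnothing$ and the degenerate case $J_1=J_2=\varnothing$ are analogous. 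Once these edge cases are verified, the multiplicativity of $\widetilde{tb}$ under $*$ holds for every summand and the desired equality is immediate.
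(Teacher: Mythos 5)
Your proof is correct, but it takes a genuinely different route from the paper's. The paper's argument is a one-liner on the topological side: it invokes the homeomorphism $\R\mathcal{Z}_{K*L}\cong\R\mathcal{Z}_K\times\R\mathcal{Z}_L$, the identity $\widetilde{D}(K)=tb(\R\mathcal{Z}_K)$ from~\eqref{Equ:DK-ZK}, and the \emph{product} half of Lemma~\ref{Lem:Join-X} (K\"unneth for $\times$), so the question of empty factors never arises. You instead work entirely on the combinatorial side of Hochster's formula: the expansion~\eqref{Equ:D-tilde-K-J}, the identity $(K*L)|_J=K|_{J_1}*L|_{J_2}$ (which is correct and worth stating explicitly, as you do), and the \emph{join} half of Lemma~\ref{Lem:Join-X}. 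Your approach is more elementary in that it bypasses moment-angle complexes altogether, and it actually yields slightly more: the termwise factorization gives a bigraded K\"unneth-type refinement, not just the equality of the total sums. The price is exactly the delicate point you identified — the join formula $\widetilde{tb}(X*Y)=\widetilde{tb}(X)\widetilde{tb}(Y)$ is proved in the paper via $X*Y\simeq\Sigma(X\wedge Y)$ for nonempty $X,Y$, so the summands with $J_1=\varnothing$ or $J_2=\varnothing$ must be handled by the conventions $K*\varnothing=K$ and $\widetilde{tb}(\varnothing)=\widetilde{\beta}_{-1}(\varnothing)=1$; you dispose of these correctly, and with them settled the double sum factors as claimed.
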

 \begin{proof}
   Since $\R\mathcal{Z}_{K*L} \cong \R\mathcal{Z}_K\times \R\mathcal{Z}_L$ (see~\cite[Chapter\,4]{BP15}), we obtain from~\eqref{Equ:DK-ZK} and
   Lemma~\ref{Lem:Join-X} that
   \[ \widetilde{D}(K*L) = tb(\R\mathcal{Z}_{K*L}) =tb(\R\mathcal{Z}_K\times \R\mathcal{Z}_L)  
   =tb(\R\mathcal{Z}_K) tb(\R\mathcal{Z}_L) = \widetilde{D}(K) \widetilde{D}(L).  \]
 \end{proof}
 
  Observe that if $K$ is a $d$-dimensional simplicial complex with $m$ vertices, then for any $r\geqslant 1$,
  $\Delta^{[r]}*K$ is a
  $(d+r)$-dimensional simplicial complex with $m+r$ vertices and, $\widetilde{D}(\Delta^{[r]}*K)=\widetilde{D}(K)$ by Lemma~\ref{Lem:Join-K}. 
   So we obtain the corollary immediately.
 
 \begin{cor} \label{Cor:Tight-Join}
 A finite simplicial complex $K$ is tight if and only if
 $\Delta^{[r]}*K$ is tight for all $r\geq 1$.
 \end{cor}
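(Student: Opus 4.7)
The plan is a direct computation using the already-established join formula. First I would set up notation: let $K$ be a finite simplicial complex with $m$ vertices and dimension $d$, so that tightness of $K$ means $\widetilde{D}(K)=2^{m-d-1}$. The key bookkeeping observation is that $\Delta^{[r]}*K$ has $m+r$ vertices and dimension $d+r$, so the tightness threshold for $\Delta^{[r]}*K$ is $2^{(m+r)-(d+r)-1}=2^{m-d-1}$, the same numerical bound as for $K$ itself. The increment $r$ on the vertex count and the increment $r$ on the dimension cancel.

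Next I would compute $\widetilde{D}(\Delta^{[r]}*K)$. By Lemma~\ref{Lem:Simplex}, $\widetilde{D}(\Delta^{[r]})=1$, and by the multiplicativity of $\widetilde{D}$ under join established in Lemma~\ref{Lem:Join-K},
\[
\widetilde{D}(\Delta^{[r]}*K)=\widetilde{D}(\Delta^{[r]})\cdot \widetilde{D}(K)=\widetilde{D}(K).
\]
Combining this equality with the previous threshold computation, the assertion ``$\widetilde{D}(\Delta^{[r]}*K)=2^{m-d-1}$'' is literally the same equation as ``$\widetilde{D}(K)=2^{m-d-1}$''. Consequently, for every fixed $r\geq 1$, tightness of $\Delta^{[r]}*K$ is equivalent to tightness of $K$, which gives both directions of the stated biconditional at once (in particular, the ``if'' direction only needs one value of $r$, e.g.\ $r=1$).

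There is essentially no obstacle here: the entire content has been absorbed into Lemma~\ref{Lem:Simplex} (which fixes the multiplicative unit $\widetilde{D}(\Delta^{[r]})=1$) and Lemma~\ref{Lem:Join-K} (which provides multiplicativity under joins). The only mild point to be careful about is to track how the pair $(m,d)$ transforms to $(m+r,d+r)$ under joining with $\Delta^{[r]}$, so as to verify that the two tightness inequalities involve the same right-hand side $2^{m-d-1}$; once this is noted the proof is a one-line consequence of the two lemmas.
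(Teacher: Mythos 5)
Your proposal is correct and is essentially identical to the paper's own argument: the paper also observes that $\Delta^{[r]}*K$ has $m+r$ vertices and dimension $d+r$ (so the threshold $2^{m-d-1}$ is unchanged) and applies Lemma~\ref{Lem:Join-K} together with $\widetilde{D}(\Delta^{[r]})=1$. Nothing is missing.
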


  By Yu and Masuda~\cite[Proposition 2.1]{YuMas21}, any
  simplicial complex of the form $\partial \Delta^{[n_1]}*\cdots*\partial \Delta^{[n_k]}$
  is tight. So by Corollary~\ref{Cor:Tight-Join},
  $\Delta^{[r]}*\partial \Delta^{[n_1]}*\cdots*\partial \Delta^{[n_k]}
  $ is also tight for all $r\geq 1$.
   Then one may ask whether simplicial complexes of the form
    $\partial \Delta^{[n_1]}*\cdots*\partial \Delta^{[n_k]}$ or $\Delta^{[r]} * \partial \Delta^{[n_1]}*\cdots*\partial \Delta^{[n_k]}
  $ are all the tight simplicial complexes?
  We will see in Theorem~\ref{Thm-Main-3} that the answer is yes.\n
  
   For brevity, we introduce the following terms.
   
   \begin{defi}
    For any positive integers $n_1,\cdots, n_k$ and $r$, we call the simplicial complex $\partial \Delta^{[n_1]}*\cdots*\partial \Delta^{[n_k]}$ a \emph{sphere join}
   and call $\Delta^{[r]}*\partial \Delta^{[n_1]}*\cdots*\partial \Delta^{[n_k]}
  $ a \emph{simplex-sphere join}.
   \end{defi}
   
  The following theorem proved
 in~\cite{YuMas21} will be useful
 for our proof. 
   
  \begin{thm}[\text{\cite[Theorem 3.1]{YuMas21}}]  \label{Thm:Yu-Masuda}
   Let $K$ be a simplicial complex of dimension $n\geq 2$. Suppose that $K$ satisfies the following two conditions:
 \begin{itemize}
  \item[(a)] $K$ is an $n$-dimensional pseudomanifold,
 \item[(b)] the link of any vertex of $K$ is a sphere join of dimension $n-1$,
 \end{itemize}
Then $K$ is a sphere join.
\end{thm}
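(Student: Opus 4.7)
The plan is to reduce the theorem to a combinatorial characterization via minimal non-faces. Specifically, I would prove that $K$ is a sphere join if and only if the minimal non-faces of $K$ are pairwise disjoint (and each has at least two elements). The ``if'' direction is standard: when the minimal non-faces are pairwise disjoint sets $F_1,\ldots,F_k\subseteq[m]$, the complex $K$ is exactly the simplex-sphere join $\Delta^{[m]\setminus\bigcup F_i} * \partial\Delta^{F_1} * \cdots * \partial\Delta^{F_k}$. The pseudomanifold hypothesis then rules out the simplex factor: if any ``free'' vertex $v\in[m]\setminus\bigcup F_i$ existed, the codim-$1$ face $([m]\setminus\bigcup F_i\setminus\{v\})\cup\tau$ (with $\tau$ a top-dimensional face of the sphere-join part) would lie in only one top-dimensional face of $K$, contradicting the requirement that every codim-$1$ face lie in exactly two top-dimensional faces. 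So reducing to pairwise disjoint minimal non-faces finishes the proof.

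The main work is then to show that any two distinct minimal non-faces $F,G$ of $K$ are disjoint. If some $v\in F\cap G$, a direct check shows that $F\setminus\{v\}$ and $G\setminus\{v\}$ are both minimal non-faces of $L:=\mathrm{Link}_K(v)$. By hypothesis $L$ is a $(n-1)$-dimensional sphere join, and sphere joins have pairwise disjoint minimal non-faces (they are precisely the vertex blocks of the join). Hence $(F\setminus\{v\})\cap(G\setminus\{v\})=\varnothing$, forcing $F\cap G=\{v\}$. To rule out this residual possibility I would induct on $n$: for the inductive step, pick any $u\in F\setminus\{v\}$ (which exists since $|F|\geq 2$); by hypothesis $\mathrm{Link}_K(u)$ is also a sphere join, and one shows that $F\setminus\{u\}$ together with a suitable ``shadow'' of $G$ appear as minimal non-faces of $\mathrm{Link}_K(u)$ still sharing the vertex $v$, contradicting the pairwise disjointness of minimal non-faces in that sphere join.

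The base case $n=2$ is handled directly, since the vertex links are $1$-dimensional sphere joins and therefore equal $3$- or $4$-cycles. Writing $V_3, V_4$ for the numbers of vertices with $C_3$- and $C_4$-links, the standard face--edge counts yield $2E=3V_3+4V_4=3F$, and Euler's formula combined with the fact that a $2$-dimensional pseudomanifold has Euler characteristic at most $2$ gives $3V_3+2V_4=6\chi$, whose non-negative integer solutions (forcing $\chi=2$ and $V_4\in 3\mathbb{Z}$) are $(V_3,V_4)\in\{(4,0),(2,3),(0,6)\}$. These correspond uniquely to $\partial\Delta^{[4]}$, the triangular bipyramid $\partial\Delta^{[2]}*\partial\Delta^{[3]}$, and the octahedron $\partial\Delta^{[2]}*\partial\Delta^{[2]}*\partial\Delta^{[2]}$ respectively, all of which are sphere joins.

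The principal obstacle is making the inductive step of the second paragraph rigorous: one must verify that the ``shadow'' of $G$ in $\mathrm{Link}_K(u)$ really is a minimal non-face there, rather than a proper superset of one. This hinges on interlocking use of the pseudomanifold property (to guarantee enough top-dimensional simplices exist to fill in the required incidences between $K$ and its vertex links) and the sphere-join structure of every vertex link (to pin down the combinatorial form of the minimal non-faces). Once this step is secured, the characterization via pairwise disjoint minimal non-faces, combined with the pseudomanifold reduction from the first paragraph, yields the theorem.
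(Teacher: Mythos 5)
This statement is imported verbatim from \cite[Theorem 3.1]{YuMas21}; the paper under review contains no proof of it, so there is nothing internal to compare against and your proposal must be judged on its own. Your overall plan --- reduce to showing that the minimal non-faces of $K$ are pairwise disjoint, then observe that pairwise disjoint minimal non-faces give a simplex--sphere join whose simplex factor is killed by the pseudomanifold condition --- is a sensible strategy, and both that reduction and your base case $n=2$ are essentially sound. The problem is that the entire content of the theorem sits in the step you leave open, and the mechanism you propose for it does not work.

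Concretely, suppose $F,G$ are minimal non-faces with $F\cap G=\{v\}$ and $|F|,|G|\geq 3$, write $\mathrm{Link}_K v=\partial\Delta^{B_1}*\cdots*\partial\Delta^{B_s}$ with $B_1=F\setminus\{v\}$, $B_2=G\setminus\{v\}$, and pick $u\in B_1$. Comparing $\mathrm{Link}_{\mathrm{Link}_K u}\,v=\mathrm{Link}_{\mathrm{Link}_K v}\,u$ and using the uniqueness of the block decomposition of a sphere join, one finds that the unique block of $\mathrm{Link}_K u$ containing $v$ is $F\setminus\{u\}$, while the trace of $G$ in $\mathrm{Link}_K u$ is the block $B_2=G\setminus\{v\}$, which does \emph{not} contain $v$. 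So the two minimal non-faces of $\mathrm{Link}_K u$ ``sharing the vertex $v$'' that your inductive step requires simply are not there, and no contradiction with hypothesis (b) arises. Worse, no purely local argument of this type can succeed: for $K=\partial\Delta^{\{1,2,3,4\}}\sqcup\partial\Delta^{\{5,6,7,8\}}$ every vertex link is $\partial\Delta^{[3]}$ (a sphere join of dimension $1$), $K$ is pure, and every $1$-simplex lies in exactly two $2$-simplices, yet the minimal non-faces $\{1,5\}$ and $\{1,6\}$ meet in a vertex and $K$ is not a sphere join. The only hypothesis violated is the strong-connectivity clause (iii) of the pseudomanifold definition, which your sketch never invokes; so the configuration $F\cap G=\{v\}$ is genuinely consistent with everything you use. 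A correct completion has to extract more from the situation --- for instance, the block matching above actually forces $(G\setminus\{v\})\cup\{u\}$ to be a new minimal non-face of $K$ for every $u\in F\setminus\{v\}$, and symmetrically $(F\setminus\{v\})\cup\{w\}$ for $w\in G\setminus\{v\}$, and two of these overlap in two vertices, contradicting your own ``intersect in at most one vertex'' lemma --- and the missing-edge case $|F|=2$ (where $F\setminus\{v\}$ is not even a vertex of $\mathrm{Link}_K v$) must be treated separately using strong connectivity, as the example shows. As written, the proposal is a plan with its decisive step both unproved and, in the form stated, false.
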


Recall that $K$ is an $n$-dimensional \emph{pseudomanifold} if
the following conditions hold:
\begin{itemize}
\item[(i)] Every simplex of $K$ is
a face of some $n$-simplex of $K$ (i.e. $K$ is \emph{pure}).\n

\item[(ii)] Every  $(n-1)$-simplex of $K$ is the face of exactly two $n$-simplices of $K$.\n

\item[(iii)] If $\sigma$ and $\sigma^{\prime}$ are two 
$n$-simplices of $K$, then there is a finite sequence of $n$-simplices $\sigma=\sigma_0, \sigma_1, \ldots, \sigma_k=\sigma^{\prime}$  such that the intersection $\sigma_i \cap \sigma_{i+1}$ is an $(n-1)$-simplex for all $i=0, \ldots, k-1$.
\end{itemize}
In particular, any closed connected PL-manifold is a pseudomanifold.\n

In addition, we will use the following inequality proved in~\cite[Theorem 3.2]{Uto11}:
\begin{equation} \label{Equ:uto-mdim}
  \widetilde{D}(K)= tb(\R\mathcal{Z}_K) \geq 2^{m-\mathrm{mdim}(K)-1},
\end{equation}
where $m$ is the number of vertices of $K$, and
$$ \mathrm{mdim}(K)=\ \text{the minimal dimension of the maximal simplices of}\ K.$$ 
The inequality~\eqref{Equ:uto-mdim} refines
the inequality~\eqref{Equ:Lower-Bound} since
$\mathrm{mdim}(K)\leq \dim(K)$.

\begin{lem}\label{lem:uto}
 	Let $K$ be a simplicial complex with $m$ vertices.  If $K$ is tight, then
 	\begin{itemize}
 		\item[(i)] $K$ is pure.
 		\item[(ii)] For every simplex $\sigma$ of $K$, $\mathrm{Link}_K \sigma$ is tight. 
 	\end{itemize}
 \end{lem}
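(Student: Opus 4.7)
The plan is to handle the two parts separately. Part (i) follows immediately from Ustinovsky's inequality~\eqref{Equ:uto-mdim}: since $K$ is tight, $\widetilde{D}(K)=2^{m-\dim(K)-1}$, while~\eqref{Equ:uto-mdim} gives $\widetilde{D}(K)\geq 2^{m-\md(K)-1}$. Combined with the trivial inequality $\md(K)\leq \dim(K)$, these force $\md(K)=\dim(K)$, which is precisely the statement that $K$ is pure.

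For part (ii), I would argue by induction on $|\sigma|$. The base case $\sigma=\varnothing$ is trivial because $\mathrm{Link}_K\varnothing=K$. For the inductive step, fix any vertex $v\in\sigma$ and use the identity
\[\mathrm{Link}_K\sigma\ =\ \mathrm{Link}_{\,\mathrm{Link}_K(\sigma\setminus v)}(v),\]
in which $v$ is indeed a vertex of $\mathrm{Link}_K(\sigma\setminus v)$ since $\sigma\in K$. By the inductive hypothesis $\mathrm{Link}_K(\sigma\setminus v)$ is tight, so everything reduces to the vertex case: if $K$ is tight and $v$ is a vertex of $K$, then $L:=\mathrm{Link}_K v$ is tight.

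For the vertex case, let $W$ denote the vertex set of $L$, $w=|W|$, and $d=\dim(K)$. Part (i) together with the fact that $\{v\}$ lies in some $d$-simplex of $K$ gives $\dim L=d-1$. The crucial observation is that for every $J\subseteq [m]\setminus\{v\}$,
\[\mathrm{Link}_{K|_{J\cup\{v\}}}(v)=L|_{J\cap W},\qquad \mathrm{Star}_{K|_{J\cup\{v\}}}(v)=\{v\}*L|_{J\cap W},\]
so the reduced Mayer--Vietoris sequence applied to the decomposition $K|_{J\cup\{v\}}=\mathrm{Star}_{K|_{J\cup\{v\}}}(v)\cup K|_J$, whose star factor is contractible, yields
\[\widetilde{tb}(L|_{J\cap W})\leq \widetilde{tb}(K|_{J\cup\{v\}})+\widetilde{tb}(K|_J).\]
Summing over $J\subseteq[m]\setminus\{v\}$ and applying formula~\eqref{Equ:D-tilde-K-J}, the right-hand side collapses to $\widetilde{D}(K)$, while the left-hand side equals $2^{m-1-w}\widetilde{D}(L)$, because each $J''\subseteq W$ is the image of exactly $2^{m-1-w}$ sets under the map $J\mapsto J\cap W$. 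This gives $\widetilde{D}(L)\leq 2^{w-d}$, which matches the universal lower bound~\eqref{Equ:Uni-Lower-bound} applied to $L$: $\widetilde{D}(L)\geq 2^{w-(d-1)-1}=2^{w-d}$. Therefore $\widetilde{D}(L)=2^{w-\dim L-1}$, and $L$ is tight.

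The main technical obstacle is recognizing the correct Mayer--Vietoris setup. The identity $\mathrm{Link}_{K|_{J\cup\{v\}}}(v)=L|_{J\cap W}$ is what produces the combinatorial factor $2^{m-1-w}$ upon summation, and this factor combines with the tightness of $K$ to pinch $\widetilde{D}(L)$ down exactly to the universal lower bound. Part (i) is used essentially: without purity one cannot conclude $\dim L=d-1$, and the upper and lower bounds for $\widetilde{D}(L)$ would fail to coincide.
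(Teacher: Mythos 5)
Your proof is correct, and for the crucial step --- tightness of the link of a vertex --- it takes a genuinely different route from the paper. The paper proves this by invoking the internal structure of the proof of Ustinovsky's theorem \cite[Theorem 3.2]{Uto11}: it extracts from that proof a subspace $X_v\subseteq \R\mathcal{Z}_K$ homeomorphic to $2^{m-m_v-1}$ disjoint copies of $\R\mathcal{Z}_{\mathrm{Link}_K v}$ with $tb(\R\mathcal{Z}_K)\geq tb(X_v)$, then pinches $tb(\R\mathcal{Z}_{\mathrm{Link}_K v})$ between that consequence and the universal lower bound~\eqref{Equ:Uni-Lower-bound}. You instead prove the same key inequality $2^{m-1-w}\,\widetilde{D}(L)\leq \widetilde{D}(K)$ directly on the level of Hochster's formula~\eqref{Equ:D-tilde-K-J}, using the decomposition $K|_{J\cup\{v\}}=\mathrm{Star}_{K|_{J\cup\{v\}}}(v)\cup K|_J$ with contractible star factor and intersection $L|_{J\cap W}$, and the counting of preimages under $J\mapsto J\cap W$; this is in effect a self-contained combinatorial shadow of Ustinovsky's topological argument, and it buys independence from the details of \cite{Uto11} (one only needs the stated inequality~\eqref{Equ:uto-mdim} for part (i) and~\eqref{Equ:Uni-Lower-bound} for the lower bound). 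Your reduction of general $\sigma$ to the vertex case via $\mathrm{Link}_K\sigma=\mathrm{Link}_{\mathrm{Link}_K(\sigma\setminus v)}(v)$ is the same as the paper's~\eqref{Equ:link-link}; your use of part (i) to get $\dim L=d-1$ exactly (rather than merely $\leq d-1$ as in the paper) is a harmless variant, since only the inequality $\dim L\leq d-1$ is needed for the upper bound while the lower bound~\eqref{Equ:Uni-Lower-bound} holds with the true dimension of $L$. The only point worth making explicit is the direction of the Mayer--Vietoris estimate you use, $\widetilde{tb}(A\cap B)\leq \widetilde{tb}(A)+\widetilde{tb}(B)+\widetilde{tb}(A\cup B)$, which is not Lemma~\ref{lem:mv} but follows from the same long exact sequence, and the degenerate cases $J=\varnothing$ or $J\cap W=\varnothing$, which are consistent with the convention $\widetilde{tb}(\varnothing)=1$.
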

\begin{proof}
 (i) By~\cite[Theorem 3.2]{Uto11},
   $\widetilde{D}(K) = tb(\R\mathcal{Z}_K) \geq 2^{m-\mathrm{mdim}(K)-1} \geq 2^{m-\mathrm{dim}(K)-1}$.
  Then since $K$ is tight, we must have $\mathrm{mdim}(K)=\mathrm{dim}(K)$, which implies that every maximal simplex of $K$ has the same dimension as $K$. So $K$ is pure.\n
 
  (ii) We do induction on $m$. When $m=1$, this is trivial. For any vertex $v$ of $K$, let $m_v$ be the number of vertices in $\mathrm{Link}_K v$. By the 
    proof of~\cite[Theorem 3.2]{Uto11} (note that 
     the argument there works for any vertex of $K$), there is a subspace $X_v$ of 
   $\R\mathcal{Z}_K$ with
   $tb(\R\mathcal{Z}_K)  \geqslant tb(X_v)$,
   where $X_v$ is homeomorphic to the disjoint union of $2^{m-m_v-1}$ copies of 
   $\R\mathcal{Z}_{\mathrm{Link}_K v}$.
   So we have
   $$ 2^{m-\mathrm{dim}(K)-1}=tb(\R\mathcal{Z}_K) \geqslant 2^{m-m_v-1}
   tb(\R\mathcal{Z}_{\mathrm{Link}_K v}). $$
   Then since $\dim(\mathrm{Link}_K v)\leq \dim(K)-1$, we obtain
      $$tb(\R\mathcal{Z}_{\mathrm{Link}_K v})\leq
      2^{m_v-\dim(K)} \leq 2^{m_v-\dim(\mathrm{Link}_K v)-1}.$$
  On the other hand,  by our induction we have
  $tb(\R\mathcal{Z}_{\mathrm{Link}_K v}) \geq 
  2^{m_v-\dim(\mathrm{Link}_K v)-1}$ since $m_v < m$.    
   So $\mathrm{Link}_K v$ is tight.\n
    
   Now suppose we have proved that $\mathrm{Link}_K \sigma$ is tight for any simplex $\sigma$ of $K$ with dimension less than $j$.
    Let $\tau$ be a $j$-simplex in $K$ and let $v$ be a vertex of $\tau$. So $\sigma = \tau \backslash \{v\}$ is a $(j-1)$-simplex and it is easy to see that
      \begin{equation} \label{Equ:link-link}
         \mathrm{Link}_K \tau = \mathrm{Link}_{\mathrm{Link}_K\sigma} v. 
         \end{equation}
   By our assumption, we know that $\mathrm{Link}_K \sigma$ is tight. So by our preceding argument, 
    we can assert that $\mathrm{Link}_K \tau$ is also tight. This finishes the proof.
  \end{proof}

\begin{lem} \label{Lem:S0}
  If a simplicial complex $K$ is tight but not connected, then $K$ must be $S^0=\partial \Delta^{[2]}$.
\end{lem}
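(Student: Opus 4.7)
I would argue by contradiction. Suppose $K$ is tight and has at least two connected components; grouping components if necessary, write $K=K_1\sqcup K_2$ with $K_1,K_2$ both nonempty subcomplexes. By Lemma~\ref{lem:uto}(i), $K$ is pure of some dimension $d$, so $K_1$ and $K_2$ are each pure of dimension $d$. Setting $m_i=|V(K_i)|$ and $m=m_1+m_2$, purity gives $m_i\geq d+1$, hence $m\geq 2d+2$.

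The heart of the proof will be a decomposition formula for $\widetilde{D}(K_1\sqcup K_2)$ obtained from~\eqref{Equ:D-tilde-K-J}. For every $J=J_1\cup J_2$ with $J_i\subseteq V(K_i)$ one has $K|_J=K_1|_{J_1}\sqcup K_2|_{J_2}$, and for any two nonempty finite CW-complexes $X,Y$ one has $\widetilde{tb}(X\sqcup Y)=\widetilde{tb}(X)+\widetilde{tb}(Y)+1$ (since an extra connected component adds $1$ to $\widetilde{\beta}_0$). Splitting the sum over $J$ into the four cases according to whether $J_1$ and $J_2$ are empty, and using $|\{J_i\subseteq V(K_i)\colon J_i\neq\varnothing\}|=2^{m_i}-1$, the bookkeeping collapses to
\[
\widetilde{D}(K)\;=\;2\;+\;2^{m_2}\widetilde{D}(K_1)\;+\;2^{m_1}\widetilde{D}(K_2)\;+\;2^{m}\;-\;2^{m_1+1}\;-\;2^{m_2+1}.
\]

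Next I would invoke the universal lower bound $\widetilde{D}(K_i)\geq 2^{m_i-d-1}$ from~\eqref{Equ:Uni-Lower-bound} (each $K_i$ is $d$-dimensional) together with the tightness hypothesis $\widetilde{D}(K)=2^{m-d-1}$. Substituting and using the elementary identity $2^{m_1+1}+2^{m_2+1}-2^m=4-(2^{m_1}-2)(2^{m_2}-2)$ then yields
\[
2\;-\;(2^{m_1}-2)(2^{m_2}-2)\;\geq\;2^{m-d-1}.
\]
Since $m_1,m_2\geq 1$ both factors on the left are nonnegative, so the left side is at most $2$; hence $2^{m-d-1}\leq 2$, which forces $m\leq d+2$. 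Combined with $m\geq 2d+2$ this gives $d=0$ and $m_1=m_2=1$, so $K$ consists of exactly two isolated vertices, i.e.\ $K=\partial\Delta^{[2]}=S^0$.

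The only real obstacle is the careful bookkeeping in deriving the decomposition formula for $\widetilde{D}(K_1\sqcup K_2)$; once that is in hand, the rest is elementary manipulation of the resulting inequality.
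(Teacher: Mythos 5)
Your proof is correct, but it takes a genuinely different route from the paper's. You derive an exact decomposition formula for $\widetilde{D}(K_1\sqcup K_2)$ from~\eqref{Equ:D-tilde-K-J} (the bookkeeping is right: the four cases contribute $1$, $\widetilde{D}(K_1)-1$, $\widetilde{D}(K_2)-1$, and $(2^{m_2}-1)(\widetilde{D}(K_1)-1)+(2^{m_1}-1)(\widetilde{D}(K_2)-1)+(2^{m_1}-1)(2^{m_2}-1)$, which collapse to your displayed identity; as a sanity check, three isolated points give $2+4+4+8-4-8=6$, matching a direct count), and then combine purity from Lemma~\ref{lem:uto}(i), which yields $m\geq 2d+2$, with the componentwise lower bound~\eqref{Equ:Uni-Lower-bound}, which yields $2^{m-d-1}\leq 2-(2^{m_1}-2)(2^{m_2}-2)\leq 2$ and hence $m\leq d+2$; the two bounds force $d=0$, $m=2$. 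The paper instead settles the $0$-dimensional case by inspection and, for $d\geq 1$, adds a single edge joining two components: this kills one class in $\widetilde{H}_0(K|_J)$ for every $J$ containing both endpoints, so $\widetilde{D}$ strictly decreases while the complex remains in $\Sigma(m,d)$, contradicting~\eqref{Equ:Uni-Lower-bound}. The paper's argument is shorter and does not need purity; yours buys an explicit closed formula for $\widetilde{D}$ of a disjoint union, which quantifies exactly how far a disconnected complex is from tight rather than merely exhibiting a violation. There is no circularity in your use of Lemma~\ref{lem:uto}, since it precedes Lemma~\ref{Lem:S0} and is proved independently of it.
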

\begin{proof}
  If $\dim(K)=0$, then $K$ being tight implies that $K$ is  isomorphic either to $\Delta^{[1]}$ or to $\partial \Delta^{[2]}$.
  If $K\in \Sigma(m,d)$ with $\dim(K)=d\geq 1$ and $K$ is not connected, let $K=K_1\sqcup K_2$ where $K_1$ and $K_2$ are two subcomplexes of $K$ that are disjoint. Then we can add a $1$-simplex $\sigma=\{i_1, i_2\}$ to $K$ with a vertex $\{i_1\}\in K_1$ and $\{i_2\}\in K_2$ and obtain a new simplicial complex $K'\in\Sigma(m,d)$.
 Observe that adding the $1$-simplex $\sigma$ to $K$ kills the generator of $\widetilde{H}_0(K|_{\{i_1,i_2\}})= \widetilde{H}_0(S^0)$. So for any $J\subset [m]$, it is easy to see that
 $$ \widetilde{tb}(K'|_J)= \begin{cases}
   \widetilde{tb}(K|_J)-1,  &  \text{if $J$ contains $\{i_1,i_2\}$}; \\
   \widetilde{tb}(K|_J),  &  \text{otherwise}.
 \end{cases}$$ 
 So by the formula~\eqref{Equ:D-tilde-K-J} of $\widetilde{D}(K)$, we can deduce that $\widetilde{D}(K') < \widetilde{D}(K)$. But since $K$ is tight, 
 $\widetilde{D}(K') < \widetilde{D}(K)=2^{m-d-1}$
  contradicting the inequality in~\eqref{Equ:Uni-Lower-bound}. 
\end{proof}

 Now, we are ready to prove the following theorem which answers Question 3.
  
  \begin{thm} \label{Thm-Main-3}
   A finite simplicial complex
     $K$ is tight if and only if $K$ is
     of the form $ \partial \Delta^{[n_1]}*\cdots*\partial \Delta^{[n_k]}$ or $\Delta^{[r]} * \partial \Delta^{[n_1]}*\cdots*\partial \Delta^{[n_k]}$ for some positive integers
     $n_1,\cdots, n_k$ and $r$.
  \end{thm}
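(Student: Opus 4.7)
The ``if'' direction is essentially already established: by~\cite[Proposition 2.1]{YuMas21} each sphere join $\partial\Delta^{[n_1]}*\cdots*\partial\Delta^{[n_k]}$ is tight, and Corollary~\ref{Cor:Tight-Join} extends tightness to simplex-sphere joins. I therefore focus on the ``only if'' direction, which I plan to prove by strong induction on the number of vertices $m$; the base cases $m=1,2$ are immediate from Lemma~\ref{Lem:Simplex} and the tightness of $\partial\Delta^{[2]}$.

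For $m\geq 3$, given a tight $K$ on $[m]$, I split into three cases. First, if $K$ is disconnected, Lemma~\ref{Lem:S0} forces $K=\partial\Delta^{[2]}$, contradicting $m\geq 3$; so this case is vacuous in the inductive step. Second, if $K$ is connected and has a cone vertex $v$, write $K=\Delta^{[1]}*\mathrm{Link}_K v$; Corollary~\ref{Cor:Tight-Join} then gives $\mathrm{Link}_K v$ tight on $m-1$ vertices, and the inductive hypothesis puts it into the desired form, so that $K$ is a simplex-sphere join. The remaining substantive case is that $K$ is connected with no cone vertex; here the goal is to apply Theorem~\ref{Thm:Yu-Masuda} (for $n=\dim K\geq 2$) to conclude that $K$ is a sphere join. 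By Lemma~\ref{lem:uto}, $K$ is pure and every link in $K$ is tight, so by induction each $\mathrm{Link}_K v$ is a sphere join or simplex-sphere join, and each $\mathrm{Link}_K\tau$ for $\tau$ of dimension $n-1$ lies in $\{\Delta^{[1]},\partial\Delta^{[2]}\}$. What remains to verify is (a) each $\mathrm{Link}_K v$ has no $\Delta^{[r]}$-factor (so is a pure sphere join of dimension exactly $n-1$), and (b) each $\mathrm{Link}_K\tau$ has two vertices (so $\tau$ lies in exactly two $n$-simplices); strong connectivity of $K$ then follows from its topological connectedness together with the sphere-join structure of the vertex links. The low-dimensional cases are dispatched by hand: $n=0$ connected forces $\Delta^{[1]}$, which has a cone vertex; $n=1$ connected without cone vertex is a tight graph whose vertices all have degree one or two, and a direct computation of $\widetilde{D}$ forces $K$ to be $C_3=\partial\Delta^{[3]}$ or $C_4=\partial\Delta^{[2]}*\partial\Delta^{[2]}$.

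The main obstacle is to rule out, under the standing hypothesis that $K$ has no cone vertex, both (a) a cone vertex of some $\mathrm{Link}_K v$ and (b) a singleton $\mathrm{Link}_K\tau$. Each of these would produce a vertex $w$ such that every maximal simplex of $\mathrm{Star}_K v$, respectively $\mathrm{Star}_K\tau$, contains $w$, and the task is to propagate this containment to every maximal simplex of $K$ and thereby conclude, contradictorily, that $w$ is a cone vertex of $K$. The natural propagation moves across shared codimension-one faces of adjacent $n$-simplices, re-applying the inductive classification at each visited link, but this uses the very pseudomanifold structure whose proof relies on (b). I plan to untangle this mutual dependence by arranging (a) and (b) in a coupled induction on the pair $(n,m)$, with careful book-keeping of what is already known at each stage so that the cone-propagation argument and the pseudomanifold conclusion reinforce rather than circularly depend on one another.
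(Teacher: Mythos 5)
Your overall architecture (induction on $m$, the ``if'' direction via \cite[Proposition 2.1]{YuMas21} and Corollary~\ref{Cor:Tight-Join}, the disconnected and cone-vertex cases, and the appeal to Theorem~\ref{Thm:Yu-Masuda} in the remaining case) matches the paper's, but the step you yourself flag as the ``main obstacle'' is a genuine gap, and your proposed fix does not close it. Ruling out a simplex factor $\Delta^{[r]}$ in some $\mathrm{Link}_K v$ by propagating the distinguished vertex $w$ across shared codimension-one faces requires exactly the pseudomanifold structure (in particular condition (ii) and strong connectivity) that you are trying to establish; announcing a ``coupled induction on $(n,m)$ with careful book-keeping'' names the circularity without resolving it, and it is not clear that any amount of book-keeping makes the propagation work, since before the pseudomanifold structure is available you have no control over how the $n$-simplices of $\mathrm{Star}_K v$ connect to the rest of $K$.

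The paper breaks the circle with a different, non-combinatorial idea that your proposal is missing: it splits on whether \emph{some} vertex link is a simplex-sphere join (rather than on whether $K$ has a cone vertex), and in that case it produces a cone vertex directly. Concretely, if $\mathrm{Link}_K v=\Delta^{[r]}*\partial\Delta^{[n_1]}*\cdots*\partial\Delta^{[n_k]}$, pick $w\in\Delta^{[r]}$ and consider the full subcomplex $K\backslash w$. Purity of $K$ forces $\mathrm{mdim}(K\backslash w)=d-1$, so Ustinovsky's refined bound~\eqref{Equ:uto-mdim} gives $\widetilde{D}(K\backslash w)\geq 2^{m-d-1}$, while Lemma~\ref{Lem-retr} gives $\widetilde{D}(K\backslash w)\leq\widetilde{D}(K)=2^{m-d-1}$. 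Equality plus Hochster's formula~\eqref{Equ:D-tilde-K-J} yields $\widetilde{tb}(K|_J)=0$ for every $J$ containing $w$, and an induction on $|\sigma|$ then shows $\{w\}\cup\sigma\in K$ for every $\sigma\in K\backslash w$, i.e.\ $K=w*(K\backslash w)$ is a cone. This converts the presence of a simplex factor in a single vertex link into a global cone structure in one stroke, so that in the complementary case \emph{all} vertex links are sphere joins, $K$ is a closed connected PL-manifold, and the pseudomanifold hypotheses of Theorem~\ref{Thm:Yu-Masuda} come for free. To complete your proof you would need to either reproduce this $\mathrm{mdim}$-plus-Hochster argument or supply a genuinely worked-out substitute for it; as written, the hard case is not proved.
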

  \begin{proof}
  By our preceding discussion, any sphere join or simplex-sphere join is tight. Conversely, suppose $K\in \Sigma(m,d)$ is tight and we do induction on $m$. When $m\leq 2$, the statement is trivial. For $m\geq 3$, by Lemma~\ref{lem:uto} and Lemma~\ref{Lem:S0},
 $K$ is connected, pure and $\mathrm{Link}_K \sigma$ is tight for every simplex $\sigma$ of $K$.  Then since the number of vertices of $\mathrm{Link}_K \sigma$ is less than $m$,
  our induction hypothesis implies that
  $\mathrm{Link}_K \sigma$ is either a sphere join or
  a simplex-sphere join. \n
  
  \textbf{Case 1:} If for every vertex $v$ of $K$, $\mathrm{Link}_K v$ is a sphere join, then by the relation in~\eqref{Equ:link-link}, we can inductively prove that $\mathrm{Link}_K \sigma$ is also a sphere join for every simplex $\sigma$ of $K$. This implies that $K$ is a closed connected PL-manifold hence a pseudomanifold. So by Theorem~\ref{Thm:Yu-Masuda}, 
  $K$ is sphere join.\n
  
  \textbf{Case 2:} If there exists a vertex $v\in K$ such that $\mathrm{Link}_K v$ is a simplex-sphere join, let
  $\mathrm{Link}_K v=\Delta^{[r]}*\partial\Delta^{[n_1]}*\cdots*\partial\Delta^{[n_k]}$.
 Since $K$ is a pure $d$-dimensional simplicial complex, $\dim(\mathrm{Link}_K v)=d-1$. 
	Take a vertex $w\in\Delta^{[r]}$ and consider the full subcomplex $K\backslash w:=K|_{[m]\setminus w}$ of $K$. Note that 
	$$\mathrm{Link}_{K\backslash w} v=\Delta^{[r]\setminus w} *\partial\Delta^{[n_1]}*\partial\Delta^{[n_2]}*\cdots*\partial\Delta^{[n_k]},$$
	which has dimension $d-2$. So the dimension of 
	$\mathrm{Star}_{K\backslash w} v$ is $d-1$, which implies 
	$$\mathrm{mdim}(K\backslash w)\leq d-1.$$
	But removing a vertex can reduce $\mathrm{mdim}(K)$ at most by one. Then since $K$ is pure, $\mathrm{mdim}(K\backslash w) \geq \mathrm{mdim}(K)-1= \dim(K)-1=d-1$.
	So $\mathrm{mdim}(K\backslash w)=d-1$. Then by~\eqref{Equ:uto-mdim}, we obtain
	$$
  \widetilde{D}(K\backslash w) \geq 2^{m-1-\mathrm{mdim}(K\setminus w)-1} = 2^{m-d-1}.$$
  But by Lemma~\ref{Lem-retr}, $\widetilde{D}(K\backslash w) \leq \widetilde{D}(K) = 2^{m-d-1}$. So we have
   \begin{equation} \label{Equ:D-equal}
      \widetilde{D}(K\backslash w)  = 2^{m-d-1} = \widetilde{D}(K). 
   \end{equation}   
   Moreover, by the formula~\eqref{Equ:D-tilde-K-J} of $\widetilde{D}(K)$, we obtain
      $$\widetilde{D}(K) - \widetilde{D}(K\backslash w)
  = \sum_{w\in J\subseteq [m]} \widetilde{tb}(K|_J).$$ 
  So the equality~\eqref{Equ:D-equal} implies that $\widetilde{tb}(K|_J)=0$  for every $J\subset [m]$ that contains $w$.\n
  
  \textbf{Claim:} For any simplex $\sigma$ of $K\backslash w$, $\{w\}\cup \sigma$ is a simplex of $K$.\n
  
   	We prove the claim by induction on $|\sigma|$. When $|\sigma|=1$, i.e. $\sigma$ is a vertex, $\{w\}\cup \sigma\in K$ since $\widetilde{tb}(K|_{\{w\}\cup \sigma})=0$. Assume that the claim is true when $|\sigma|< s$. If $|\sigma|=s$, then by induction $K|_{\{w\}\cup \tau}$ is a simplex for every $\tau\subsetneq \sigma$. If $\{w\}\cup \sigma$ is not a simplex of $K$,
   	 then $K|_{\{w\}\cup \sigma}$ is isomorphic to the boundary of an
   	 $s$-dimensional simplex. But this contradicts the above conclusion that 
   	 $\wt{tb}(K|_{\{w\}\cup \sigma})=0$. So the claim is proved.
  
    \n
    
    By the above claim, $K=w*(K\backslash w)$ is a cone
    of $w$ with $K\backslash w$. It follows that
    $$  \dim(K\backslash w) = d-1.$$ 
    So by~\eqref{Equ:D-equal}, $ \widetilde{D}(K\backslash w)  = 2^{m-d-1} = 2^{m-1-\dim(K\backslash w)-1}$, i.e. $K\backslash w$ is tight. Then by our induction, $K\backslash w$
    is either a sphere-join or a simplex-sphere join, which implies that $K=w*(K\backslash w)$ is a simplex-sphere join. The theorem is proved.
\end{proof}

   \vskip .4cm
   
     \section{Simplicial complexes with the maximum total bigraded Betti number} \label{Sec-Maximum-D}

 In this section, we give a complete answer to Question 4.
 First, we prove a lemma parallel to Lemma~\ref{lem:mv}
 on total bigraded Betti number.
 
\begin{lem}\label{lem:mv2}
	For any two simplicial complexes $K,L$ with vertex set $[m]$, 
	$$\widetilde{D}(K)+\widetilde{D}(L) \leq \widetilde{D}(K \cap L)+ \widetilde{D}(K \cup L).$$
\end{lem}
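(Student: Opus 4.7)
The plan is to reduce this statement to Lemma \ref{lem:mv} by summing over all subsets $J \subseteq [m]$, using the formula
\[
\widetilde{D}(K) = \sum_{J \subseteq [m]} \widetilde{tb}(K|_J)
\]
from equation \eqref{Equ:D-tilde-K-J}. So first I would verify the straightforward combinatorial identities that full subcomplex formation commutes with intersection and union: for every $J \subseteq [m]$,
\[
(K \cap L)|_J = K|_J \cap L|_J, \qquad (K \cup L)|_J = K|_J \cup L|_J.
\]
Both follow directly from the definitions, since a simplex $\sigma \subseteq J$ lies in $K \cap L$ (resp.\ $K \cup L$) if and only if it lies in both (resp.\ either) of $K|_J$ and $L|_J$.

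Next, I would apply Lemma \ref{lem:mv} to the pair of subcomplexes $K|_J, L|_J$ of $\Delta^{[m]}$ (or, equivalently, of $\Delta^J$) to obtain, for each $J$,
\[
\widetilde{tb}(K|_J) + \widetilde{tb}(L|_J) \leq \widetilde{tb}(K|_J \cap L|_J) + \widetilde{tb}(K|_J \cup L|_J) = \widetilde{tb}((K \cap L)|_J) + \widetilde{tb}((K \cup L)|_J).
\]
Finally, I would sum this inequality over all $J \subseteq [m]$ and invoke \eqref{Equ:D-tilde-K-J} on each of the four resulting sums to recover exactly the desired inequality
\[
\widetilde{D}(K) + \widetilde{D}(L) \leq \widetilde{D}(K \cap L) + \widetilde{D}(K \cup L).
\]
There is no serious obstacle in this argument; the whole content lies in Lemma \ref{lem:mv} (itself a Mayer--Vietoris consequence) together with Hochster's formula packaged as \eqref{Equ:D-tilde-K-J}. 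The one point worth being careful about is the empty subcomplex case: when $J = \varnothing$, all four terms $\widetilde{tb}(K|_\varnothing)$, $\widetilde{tb}(L|_\varnothing)$, $\widetilde{tb}((K\cap L)|_\varnothing)$, $\widetilde{tb}((K\cup L)|_\varnothing)$ equal the reduced total Betti number of $\varnothing$, which is $1$ by the convention fixed in the introduction, so the $J = \varnothing$ contribution to the inequality is simply $1+1 \leq 1+1$ and causes no trouble.
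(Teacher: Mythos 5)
Your argument is correct, but it takes a genuinely different route from the paper. The paper proves this lemma with a single topological step: it observes that $\R\mathcal{Z}_{K\cup L}=\R\mathcal{Z}_K\cup \R\mathcal{Z}_L$ and $\R\mathcal{Z}_{K\cap L}=\R\mathcal{Z}_K\cap \R\mathcal{Z}_L$, applies Mayer--Vietoris once to the pair of real moment-angle complexes $(\R\mathcal{Z}_K,\R\mathcal{Z}_L)$, and then invokes the identification $\widetilde{D}(K)=tb(\R\mathcal{Z}_K)$ from \eqref{Equ:DK-ZK}. You instead stay entirely at the simplicial level: you expand $\widetilde{D}$ via Hochster's formula \eqref{Equ:D-tilde-K-J}, check that taking full subcomplexes commutes with $\cap$ and $\cup$, apply Lemma~\ref{lem:mv} to each pair $(K|_J, L|_J)$, and sum over $J$. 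Both are valid; your version is more elementary (it never mentions $\R\mathcal{Z}_K$ and makes the combinatorial content transparent, namely one submodularity inequality per subset $J$), while the paper's version is a one-line reduction given the machinery of Section~3 and conceptually parallels its proof of Lemma~\ref{lem:mv}. One small point you should make explicit beyond the $J=\varnothing$ case you already address: for $J\neq\varnothing$ it can happen that $K|_J$, $L|_J$, or $K|_J\cap L|_J$ is empty, and the Mayer--Vietoris argument behind Lemma~\ref{lem:mv} then needs the augmented (degree $-1$) version of the sequence together with the convention $\widetilde{\beta}_{-1}(\varnothing)=1$; a quick check shows the inequality still holds in each such case (e.g.\ when $K|_J$ and $L|_J$ are nonempty and disjoint, the right-hand side exceeds the left by $2$), so this is a patch rather than a gap.
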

\begin{proof}
 From~\eqref{Equ:RZK}, it is easy to see that
	$$\R\ZZ_{K\cup L}=\R\ZZ_K\cup \R\ZZ_L, \quad \R\ZZ_{K\cap L}=\R\ZZ_K\cap \R\ZZ_L.$$
	Then similarly to Lemma~\ref{lem:mv}, the Mayer-Vietoris sequence for $(\R\ZZ_K,\R\ZZ_L)$ gives
	$$tb(\R\ZZ_K)+tb(\R\ZZ_L)\leq tb(\R\ZZ_K\cap\R\ZZ_L)+tb(\R\ZZ_K\cup\R\ZZ_L),$$
	which is equivalent to the statement of the lemma by~\eqref{Equ:DK-ZK}.
\end{proof}

     \begin{thm} \label{Thm-Main-4}
   If $K$ is a simplicial complex with vertex set $[m]$, then
$$
\widetilde{D}(K;\mathbb{F}) \leqslant g\left(m,\left[\frac{m-1}{3}\right]\right)+1
$$
for any field $\mathbb{F}$, where the equality holds if and only if 
$K=\Delta_{\left(\left[\frac{m-1}{3}\right]-1\right)}^{[m]}$.
\end{thm}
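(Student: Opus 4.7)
The plan is to follow the two-step template of Theorem~\ref{Thm-Main-1}: first use the Mayer--Vietoris inequality (Lemma~\ref{lem:mv2}) together with vertex-permutation symmetry to force every element of
$$\Sigma^{\widetilde{D}}(m)=\Big\{K\in\Sigma(m)\,\big|\,\widetilde{D}(K;\mathbb{F})=\max_{L\in\Sigma(m)}\widetilde{D}(L;\mathbb{F})\Big\}$$
to be a skeleton of $\Delta^{[m]}$, and then determine which skeleton is optimal. Concretely, any permutation $\phi$ of $[m]$ induces a simplicial automorphism, so $\widetilde{D}(\phi K)=\widetilde{D}(K)$, and both $K\cap\phi K$ and $K\cup\phi K$ lie in $\Sigma(m)$ since they still contain every vertex of $[m]$. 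For $K\in\Sigma^{\widetilde{D}}(m)$ the maximality of $\widetilde{D}(K)$ combined with Lemma~\ref{lem:mv2} gives
$$2\widetilde{D}(K)\leq \widetilde{D}(K\cap\phi K)+\widetilde{D}(K\cup\phi K)\leq 2\widetilde{D}(K),$$
so $K\cap\phi K$ and $K\cup\phi K$ must themselves belong to $\Sigma^{\widetilde{D}}(m)$. If $K$ is minimal (respectively maximal) in $\Sigma^{\widetilde{D}}(m)$ under inclusion, then $K\cap\phi K=K$ (resp.\ $K\cup\phi K=K$) for every $\phi$; applying this with both $\phi$ and $\phi^{-1}$ yields $\phi K=K$ for all $\phi$, forcing $K=\Delta^{[m]}_{(d)}$ for some $d$.

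For the evaluation step, formula~\eqref{Equ:D-tilde-K-J} applied to $K=\Delta^{[m]}_{(d-1)}$ is a direct computation: for $|J|=j$, the restriction $K|_J$ is the $(d-1)$-skeleton of $\Delta^{J}$, which is the full simplex (contributing $\widetilde{tb}=0$) when $j\leq d$, and otherwise has its only nontrivial reduced homology in degree $d-1$ of rank $\binom{j-1}{d}$. Together with $\widetilde{tb}(\varnothing)=1$, this yields
$$\widetilde{D}(\Delta^{[m]}_{(d-1)})=1+\sum_{j=d+1}^{m}\binom{m}{j}\binom{j-1}{d}=1+g(m,d).$$
It remains to establish the purely combinatorial claim that $k\mapsto g(m,k)$ attains its \emph{strict} maximum on $\{1,\ldots,m\}$ at $k=\left[\frac{m-1}{3}\right]$. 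This is the combinatorial inequality promised in the introduction and is to be handled in the appendix; heuristically, the term-by-term ratio $\binom{j-1}{k+1}/\binom{j-1}{k}=(j-1-k)/(k+1)$ exceeds $1$ precisely when $j>2k+2$, so the signed sum $g(m,k+1)-g(m,k)$ has a single sign change in $k$ that one aims to pin down at $k=\left[\frac{m-1}{3}\right]$.

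Given this strict unimodality, both the minimal and maximal elements of $\Sigma^{\widetilde{D}}(m)$ coincide with the single skeleton $\Delta^{[m]}_{\left(\left[\frac{m-1}{3}\right]-1\right)}$; since any $K\in\Sigma^{\widetilde{D}}(m)$ is sandwiched between some minimal and some maximal element of $\Sigma^{\widetilde{D}}(m)$, we conclude
$$\Sigma^{\widetilde{D}}(m)=\Big\{\Delta^{[m]}_{\left(\left[\frac{m-1}{3}\right]-1\right)}\Big\}$$
with maximum value $g\left(m,\left[\frac{m-1}{3}\right]\right)+1$. The main obstacle is clearly the unimodality and strict-maximum assertion for $g(m,k)$; the symmetry reduction and the Hochster computation mirror arguments already established in the paper.
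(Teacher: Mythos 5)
Your proposal follows the paper's proof essentially verbatim: the permutation-symmetry argument with Lemma~\ref{lem:mv2} reduces the extremal problem to the skeleta $\Delta^{[m]}_{(d)}$, the Hochster computation gives $\widetilde{D}(\Delta^{[m]}_{(d-1)})=g(m,d)+1$, and the maximization of $g(m,\cdot)$ is deferred to the appendix exactly as the paper defers it to Lemma~\ref{lem:calc}. Note only that your heuristic for the strict unimodality of $k\mapsto g(m,k)$ is not a proof and differs from what the appendix actually does (it uses the identity $g(m,d)+g(m,d-1)=2^{m-d}\binom{m}{d}$, an integral representation of $g(m,d-1)/\binom{m}{d}$, and Cauchy--Schwarz), so that combinatorial lemma remains a genuine external input to your argument.
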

\begin{proof}
 Let
$\Sigma^{\widetilde{D}}(m)=\Big\{ K\in  \Sigma(m) \mid \widetilde{D}(K)= \underset{L\in\Sigma(m)}{\max} \widetilde{D}(L)\Big\}$, 
 which is a partially ordered set with respect to the inclusions of simplicial complexes.\n

	Suppose $K$ is a minimal or a maximal element of $\Sigma^{\widetilde{D}}(m)$. Then for every permutation $\phi$ of the vertex set of $K$, it follows from Lemma~\ref{lem:mv2} that 
	$$\widetilde{D}(K)+\widetilde{D}(\phi K) \leq \widetilde{D}(K \cap \phi K)+ \widetilde{D}(K \cup \phi K).$$
	This implies that $K=\phi K$. So $K$ must be
	 $\Delta^{[m]}_{(d)}$ where $d=\dim(K)$. 
	 Observe that any nonempty full subcomplex of $\Delta^{[m]}_{(d)}$ on $k$ vertices with $k\leq d+1$ is a simplex. So to compute $\widetilde{D}(\Delta^{[m]}_{(d)})$ via the formula~\eqref{Equ:D-tilde-K-J}, we only need to consider the full subcomplexes of $\Delta^{[m]}_{(d)}$ with more than $d+1$ vertices.
	Moreover, it is easy to see that the reduced homology
	 group of any nonempty full subcomplex of $\Delta^{[m]}_{(d)}$ always concentrates at degree $d$. Then an easy calculation shows that
	$$\widetilde{D}(\Delta^{[m]}_{(d)})=\sum_{i=0}^{m-d-2}\binom{m}{m-i}\binom{m-i-1}{d+1}+1 \overset{\eqref{Equ:g-def}}{=}g(m,d+1)+1.$$
	So the theorem follows from Lemma~\ref{lem:calc}
	in the Appendix.
\end{proof}

 \begin{rem}
  For a simplicial complex $K$ with vertex set $[m]$,
  the following combinatorial invariants of $K$ were studied by Codenotti, Spreer and Santos~\cite{Cod18}: 
    $$\tau_i(K)=\frac{1}{m+1}\sum\limits_{J\subset[m]}\frac{\wt{\beta}_i(K|_J)}{\binom{m}{|J|}}, \ i\geq -1. $$
    Formally, $\tau_i(K)$ is some sort of weighted average of the
    $i$-th Betti number of all the full subcomplexes of $K$, which has a similar flavor as our $\widetilde{D}(K)$.
   \end{rem}
  \vskip .4cm
  
 \section{Appendix}
\begin{lem}\label{lem:calc}
	For $0\leq d < m$, $g(m,d)=\sum_{j=d+1}^{m}\binom{m}{j}\binom{j-1}{d}$ reaches the maximum when and only when $d=\left[\frac{m-1}{3}\right]$.
\end{lem}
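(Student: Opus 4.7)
The plan is to pin down the sign of $\Delta(m,d):=g(m,d+1)-g(m,d)$: I aim to show $\Delta(m,d)<0$ when $m\leq 3d+3$ and $\Delta(m,d)>0$ when $m\geq 3d+4$. Translated into conditions on $d$ this reads $\Delta(m,d)>0\iff d<\left[\frac{m-1}{3}\right]$ and $\Delta(m,d)<0\iff d\geq\left[\frac{m-1}{3}\right]$, so $g(m,\cdot)$ is strictly unimodal with unique maximum at $d^*=\left[\frac{m-1}{3}\right]$.

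The key identity, which can be derived by using $\binom{j-1}{d+1}=\frac{j-1-d}{d+1}\binom{j-1}{d}$ in $g(m,d+1)$, combining with $g(m,d)$ term by term, and substituting $y=j-d-2$ (or equivalently by starting from the combinatorial form $g(m,d)=\sum_{x=d+1}^m\binom{x-1}{d}2^{x-d-1}$ obtained by grouping pairs $T\subseteq S\subseteq[m]$ with $|T|=d$, $\max S\notin T$ by $x=\max S$), is
\begin{equation*}
(d+1)\bigl[g(m,d)-g(m,d+1)\bigr]=2(d+1)^2+\sum_{y=1}^{m-d-2}2^y\binom{y+d+1}{d}(2d+1-y).
\end{equation*}
The easy half falls out at once: when $m\leq 3d+3$ every $y$ in the summation range satisfies $y\leq 2d+1$, so every summand is non-negative, the right-hand side is $\geq 2(d+1)^2>0$, and thus $\Delta(m,d)<0$.

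For the harder half I would exploit the one-step recurrence $g(m,d)=g(m-1,d)+\binom{m-1}{d}2^{m-d-1}$ (immediate from splitting $\binom{m}{j}=\binom{m-1}{j}+\binom{m-1}{j-1}$ in the definition), which gives
\begin{equation*}
\Delta(m,d)-\Delta(m-1,d)=\frac{2^{m-d-2}\binom{m-1}{d}(m-3d-3)}{d+1}.
\end{equation*}
For $m\geq 3d+4$ this increment is strictly positive, so $\Delta(\cdot,d)$ is strictly increasing on $\{3d+3,3d+4,\ldots\}$ and it suffices to verify the single base case $\Delta(3d+4,d)>0$. Specializing the identity at $m=3d+4$, the summand at $y=2d+2$ is the unique negative term $-2^{2d+2}\binom{3d+3}{d}$, the summand at $y=2d+1$ vanishes, and those with $y\leq 2d$ are strictly positive; the base case reduces to
\begin{equation*}
2^{2d+2}\binom{3d+3}{d}>2(d+1)^2+\sum_{y=1}^{2d}2^y\binom{y+d+1}{d}(2d+1-y).
\end{equation*}

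This last inequality is the main obstacle. My plan is to bound $\binom{y+d+1}{d}\leq\binom{3d+1}{d}$ for $y\leq 2d$ together with the closed form $\sum_{y=1}^{2d}2^y(2d+1-y)=2^{2d+2}-4d-4$, reducing matters to $2^{2d+2}\bigl[\binom{3d+3}{d}-\binom{3d+1}{d}\bigr]+4(d+1)\binom{3d+1}{d}>2(d+1)^2$. A direct ratio calculation gives $\binom{3d+3}{d}-\binom{3d+1}{d}=\binom{3d+1}{d}\cdot\frac{5d}{2(2d+3)}$, so the first bracket is non-negative for all $d\geq 0$, while the trivial estimate $\binom{3d+1}{d}\geq d+1$ already makes $4(d+1)\binom{3d+1}{d}\geq 4(d+1)^2>2(d+1)^2$ (with $d=0$ a direct numerical check), finishing the proof.
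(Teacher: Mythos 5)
Your proposal is correct, and it takes a genuinely different route from the paper. The paper first uses the identity $g(m,d)+g(m,d-1)=2^{m-d}\binom{m}{d}$ to determine the sign of $g(m,d)-g(m,d-2)$, which narrows the candidates for the maximizer to two or three values of $d$ in each congruence class of $m$ modulo $3$; it then decides among these by rewriting $g(m,d-1)/\binom{m}{d}$ as the integral $\int_0^1 dx^{d-1}(1+x)^{m-d}\,\mathrm{d}x$ and bounding it via the Cauchy--Schwarz inequality. You instead prove the stronger statement that $d\mapsto g(m,d)$ is strictly unimodal with peak at $\left[\frac{m-1}{3}\right]$, by controlling the sign of the consecutive difference $\Delta(m,d)=g(m,d+1)-g(m,d)$ directly. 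Your two key tools --- the exact formula $(d+1)\bigl[g(m,d)-g(m,d+1)\bigr]=2(d+1)^2+\sum_{y=1}^{m-d-2}2^y\binom{y+d+1}{d}(2d+1-y)$ (which I checked, e.g.\ it gives $-60$ for $(m,d)=(7,1)$, matching $2(321-351)$) and the recurrence $\Delta(m,d)-\Delta(m-1,d)=2^{m-d-2}\binom{m-1}{d}(m-3d-3)/(d+1)$ --- are both correct, as are the closed form $\sum_{y=1}^{2d}2^y(2d+1-y)=2^{2d+2}-4d-4$ and the ratio computation $\binom{3d+3}{d}-\binom{3d+1}{d}=\binom{3d+1}{d}\cdot\frac{5d}{2(2d+3)}$, so the base case $\Delta(3d+4,d)>0$ goes through. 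Your argument is entirely elementary (no integral representation, no Cauchy--Schwarz) and yields monotonicity information on both sides of the peak that the paper's two-step comparison does not directly give; the price is the somewhat more delicate base-case inequality at $m=3d+4$, whereas the paper's integral estimate handles all three residues of $m$ uniformly. One cosmetic remark: the identity as stated follows cleanly from the rewriting $g(m,d)=\sum_{x=d+1}^m\binom{x-1}{d}2^{x-d-1}$ (your second derivation); the first derivation you sketch, combining the original sums term by term in $j$, produces a different-looking (though equivalent) expression, so you should present the second route.
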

\begin{proof}
The cases $m\leq 4$ can be checked by hand, so we assume $m\geq5$ in the rest of the proof.
	It is easy to verify that 
	\begin{equation} \label{gmd-1}
	  g(m,d)+g(m,d-1)=2^{m-d}\binom{m}{d}.
	  \end{equation}
	So $g(m,d)-g(m,d-2)=2^{m-d}\binom{m}{d} - 2^{m-d+1}\binom{m}{d-1}$, which implies that
	$$g(m,d)-g(m,d-2) > 0 \, \Longleftrightarrow\, d<\frac{m+1}{3}.$$
	$$g(m,d)=g(m,d-2) \, \Longleftrightarrow\, d=\frac{m+1}{3}.$$
	Then we can deduce that
	\begin{itemize}
		\item if $m=3n$, then $g(m,d)$ is maximal only when $d=n-1$ or $n$;\n
		\item if $m=3n+1$, then $g(m,d)$ is maximal only when $d=n-1$ or $n$;\n
		\item if $m=3n+2$, then $g(m,d)$ is maximal only when $d=n-1, n$ or $n+1$ (since $g(n-1)=g(n+1)$ in this case).
	\end{itemize}
	So to prove the Lemma, we only need to prove: for any $n\geq 1$,
	\begin{itemize}
		\item[(a)] $g(3n,n-1)>g(3n,n)$;\n
		\item[(b)] $g(3n+1,n-1)<g(3n+1,n)$;\n
		\item[(c)] $g(3n+2,n-1)<g(3n+2,n)$.
	\end{itemize}
	\n
	 Note that by~\eqref{gmd-1},        
	  $$ g(m,d)>g(m,d-1)\, \Longleftrightarrow \, g(m,d-1)<2^{m-d-1}\binom{m}{d} \, \Longleftrightarrow \, 
	  \frac{g(m,d-1)}{\binom{m}{d}} < 2^{m-d-1}.$$	
	
	 We directly compute
	\begin{align} \label{Equ:g-m-d}
		\frac{g(m,d-1)}{\binom{m}{d}} =&\
		 \frac{1}{\binom{m}{d}} \sum_{j=d}^m \binom{m}{j}\binom{j-1}{d-1} 
		\stackrel{j\to m-j}{=} \frac{1}{\binom{m}{d}} \sum_{j=0}^{m-d} \binom{m}{m-j}\binom{m-j-1}{d-1} \notag \\
		=&\ \sum_{j=0}^{m-d}\frac{d}{m-j}\binom{m-d}{j}   =  
		 \int_0^1 d x^{d-1} \sum_{j=0}^{m-d}\binom{m-d}{j}x^{m-d-j}\mathrm{d}x \notag \\
		=&\ \int_0^1 d x^{d-1}(1+x)^{m-d}\mathrm{d}x. 
	\end{align}
	So we obtain
	\[\begin{aligned}
		g(m,d)>g(m,d-1) 
		&\Longleftrightarrow \,\int_0^1dx^{d-1}(1+x)^{m-d}\mathrm{d}x<2^{m-d-1}.
	\end{aligned}\]
	 Moreover, by the Cauchy-Schwarz inequality:
	\begin{align*}
	 \int_0^1 d x^{d-1}(1+x)^{m-d}\mathrm{d}x &\leq
	d\, \sqrt{\int_0^1 (x^{d-1})^2 \,\mathrm{d}x} \, \sqrt{\int_0^1 [(1+x)^{m-d}]^2\,\mathrm{d}x } \\
	 &\leq 2^{m-d-1} \sqrt{\frac{8d^2}{(2d-1)(2m-2d+1)}} .
	 \end{align*}
	So to prove (b) and (c), we only need to show $8d^2<(2d-1)(2m-2d+1)$ for $(m,d)=(3n+1,n)$ and $(m,d)=(3n+2,n)$, which is easy to check.	
	Similarly, 
	\[\begin{aligned}
		g(m,d)<g(m,d-1) & \, \Longleftrightarrow \, g(m,d)<2^{m-d-1}\binom{m}{d} \, \Longleftrightarrow \,\frac{g(m,d)}{\binom{m}{d}} < 2^{m-d-1} \\
		&\Longleftrightarrow \,\int_0^1(m-d)x^{d}(1+x)^{m-d-1}\mathrm{d}x<2^{m-d-1},
	\end{aligned}\]
    where we use the result of~\eqref{Equ:g-m-d}
     with $d$ substituted by $d+1$ and the relation $\binom{m}{d+1}=\frac{m-d}{d+1}\binom{m}{d}$. Then again by the Cauchy-Schwarz inequality, we obtain
    $$\int_0^1(m-d)x^{d}(1+x)^{m-d-1}\mathrm{d}x\leq 2^{m-d-1} \sqrt{\frac{2(m-d)^2}{(2d+1)(2m-2d-1)}}.$$
    So to prove (a), we only need to show $2(m-d)^2<(2d+1)(2m-2d-1)$ for $(m,d)=(3n,n)$, which is also easy to check.	
	This finishes the proof.
\end{proof}
 
\vskip .8cm

\end{document}